\numberwithin{equation}{section}
\theoremstyle{plain}
\newtheorem{lemma}{Lemma}[section]
\newtheorem{theorem}{Theorem}[section]
\newtheorem{corollary}{Corollary}[section]
\def\journal@name{}
\newcommand{\ul}[1]{\underline{#1}}
\newcommand{\norm}[1]{\left\Vert#1\right\Vert}
\newcommand{\abs}[1]{\left\vert#1\right\vert}
\begin{document}                                                                                                                                                                     

\begin{frontmatter}
\title{Adaptive Bayesian Estimation of Conditional Densities
%\protect\thanksref{T1}
}
\runtitle{
Adaptive Bayesian Estimation of Conditional Densities}
\thankstext{T1}{First version: May 2014,  current version: \today.}
\thankstext{T2}{We thank the Co-editor and referees for helpful comments.}

\begin{aug}
\author{\fnms{Andriy} \snm{Norets}\thanksref{t1}
%\thanksref{t1}
\ead[label=e1]{andriy\_norets@brown.edu}
%\ead[label=u1,url]{http://www.econ.uiuc.edu/\~{ }anorets}
}
\and
\author{\fnms{Debdeep} \snm{Pati}
\thanksref{t2}
\ead[label=e2]{debdeep@stat.fsu.edu}
 %\ead[label=u2,url]{http://www.princeton.edu/\~{ }jpelenis} 
}

\thankstext{t1}{Associate Professor, Department of Economics, Brown University
}

\thankstext{t2}{ 
Assistant Professor, Department of Statistics, Florida State University
}

\runauthor{A. Norets and D. Pati}

\affiliation{Brown University and Florida State University}

\address{Economics Department, \\
Brown University,
Providence, RI 02912
\\
\printead{e1}
%\\
%\phantom{E-mail:\ }
%\printead*{e2}
}
\address{ \\ Department of Statistics,\\
Florida State University 
\\
\printead{e2}
%\\
%\phantom{E-mail:\ }
%\printead*{e2}
}

\end{aug}

\begin{abstract}

We consider a non-parametric Bayesian model for conditional densities.  The model is a finite mixture of normal distributions with covariate dependent multinomial logit mixing probabilities.  A prior for the number of mixture components is specified on positive integers.  The marginal distribution of covariates is not modeled.  We study asymptotic frequentist behavior of the posterior in this model.  Specifically, we show that when the true conditional density has a certain smoothness level, then the posterior contraction rate around the truth is equal up to a log factor to the frequentist minimax rate of estimation.  
An extension to the case when the covariate space is unbounded is also established.
As our result holds without a priori knowledge of the smoothness level of the true density, the established posterior contraction rates are adaptive. 
Moreover, we show that the rate is not affected by inclusion of irrelevant covariates in the model.
In Monte Carlo simulations, a version of the model compares favorably to a cross-validated kernel conditional density estimator.

\end{abstract}

%\begin{keyword}[class=AMS]
%\kwd[Primary ]{62G07}
%\kwd[; secondary ]{62G20.}
%\end{keyword}

\begin{keyword}
\kwd{Bayesian nonparametrics, adaptive rates, posterior contraction, conditional density, mixtures of normal distributions, smoothly mixing regressions, mixtures of experts.}
%\kwd{}
\end{keyword}

% JEL codes
% C11	Bayesian Analysis: General
% C14	Semiparametric and Nonparametric Methods: General

\end{frontmatter}

\section{Introduction}

Conditional distributions provide a general way to describe a relationship between a response variable and covariates.
An introduction to classical nonparametric estimation of conditional distributions and applications in economics can be found in Chapters 5-6 of \cite{LiRacine2007}.
Applications of flexible Bayesian  models for conditional densities include
analysis of financial data and distribution of earnings in \cite{Geweke:07}, estimation of health expenditures in 
\cite{KeaneStavrunova2011SMRApp}, and analysis of firms’ leverage data in \cite*{VillaniKohnNott2012}; see also  
\cite{MacEachern:99}, \cite*{DeIorioMullerRosnerMacEachern:04}, \cite{GriffinSteel:06},
\cite*{DunsonPillaiPark:07}, 
\cite{DunsonPark:08}, \cite*{VillaniKohnGiordani:07}, \cite{ChungDunson:09},
\cite*{LiVillaniKohn2010}, \cite{NoretsPelenis2012},   
and \cite{NoretsPelenis:11}.
This literature suggests that the Bayesian approach to nonparametric conditional distribution estimation has several attractive properties.
First, it does not require fixing a bandwidth or similar tuning parameters.  Instead, it provides estimates of the objects of interest where these tuning parameters are averaged out with respect to their posterior distribution. 
Second, the Bayesian approach naturally provides a measure of uncertainty  through the posterior distribution.
Third, the Bayesian approach performs well in out-of-sample prediction and Monte Carlo exercises. 
The present paper contributes to the literature on theoretical properties 
of these models and provides an explanation for their excellent performance in applications.

We focus on mixtures of Gaussian densities with covariate dependent mixing weights and a variable number of mixture components for which a prior on positive integers is specified.  Conditional on the number of mixture components, 
we model the mixing weights by a multinomial logit with a common scale parameter.
The marginal distribution of covariates is not modeled. 
This model is closely related to mixture-of-experts  
(\cite*{JacobsEtAl:91},  \cite{JordanXu:95},  \cite*{PengJacobsTanner:1996},  \cite*{WoodJiangTanner:02}), also known as smooth mixtures in econometrics (\cite{Geweke:07},  \cite{VillaniKohnGiordani:07},  \cite{Norets_aos:10}).
We study asymptotic frequentist properties of the posterior distribution in this model.

Understanding frequentist properties of Bayesian nonparametric procedures is important 
because frequentist properties, such as posterior consistency and optimal contraction rates, guarantee 
that the prior distribution is not dogmatic in a precise sense.  It is not clear how to formalize this using other approaches, especially, in high or infinite dimensional settings.
There is a considerable literature on frequentist properties of nonparametric Bayesian density estimation 
(\cite*{BarronSchervishWasserman:99}, \cite*{GhosalGhoshRamamoorthi:99},  \cite{GhosalVaart:01},  
\cite*{GhosalGhoshVaart:2000},  \cite{GhosalVandervaart:07},  \cite{Huang:04},  \cite{Scricciolo:06},  \cite{VaartZanten:09},  \cite{Rousseau:10},  \cite*{KruijerRousseauVaart:09},  \cite*{ShenTokdarGhosal2013}).
There are fewer results for conditional distribution models in which the distribution of covariates is left unspecified.  \cite{Norets_aos:10} studies approximation bounds in Kullback-Leibler distance for several classes of conditional density models.  
\cite{NoretsPelenis:11} consider posterior consistency for 
a slightly more general version of the model we consider here 
and kernel stick breaking mixtures for conditional densities.   \cite*{pati2013} study posterior consistency when mixing probabilities are modeled by transformed Gaussian processes.  \cite*{Tokdar2010}  show posterior consistency for models based on logistic Gaussian process priors. 
\cite{shen2014adaptive} obtain posterior contraction rates for a compactly supported conditional density model based on splines.

In this article, we show that under reasonable conditions on the prior, the posterior in our model
contracts at an optimal rate up to a logarithmic factor.
The assumed prior distribution does not depend on the smoothness level of the true conditional density. Thus, the obtained posterior contraction rate is adaptive across all smoothness levels.
An interpretation of this is that 
the prior 
%is not dogmatic with regard to smoothness or that it  
puts sufficient amount of weight around conditional densities of all smoothness levels and, 
thus, the posterior can concentrate around the true density of any smoothness nearly as quickly as possible.
In this particular sense, the prior is not dogmatic with regard to smoothness.

Adaptive posterior convergence rates in the context of density estimation are  obtained by 
\cite{Huang:04}, \cite{Scricciolo:06}, \cite{VaartZanten:09}, \cite{Rousseau:10}, 
\cite{KruijerRousseauVaart:09}, and \cite{ShenTokdarGhosal2013}.  
If the joint and conditional densities have the same smoothness, 
adaptive posterior contraction rates for multivariate joint densities in
\cite{VaartZanten:09} and \cite{ShenTokdarGhosal2013} imply adaptive rates for the conditional densities. 
However, it is important to note here that when the conditional density is smoother than the joint density in the sense of H\"older, it is not clear if the optimal adaptive rates for the conditional density can be achieved with a model for the joint distribution. 
A closely related concern, which is occasionally raised by researchers using mixtures for modeling a joint multivariate distribution and then extracting conditional distributions of interest,
is that many mixture components might be used primarily to provide
a good fit to the marginal density of covariates and, as a result, the fit for conditional densities deteriorates (see, for example, \cite*{wade2014improving}).
In our settings, this problem does not arise as we put a prior on the conditional density directly and do not model the marginal density of the covariates.  The resulting convergence rate depends only on the smoothness level of the conditional density.

An important advantage of estimating the conditional density directly is that the problem of covariate selection can be easily addressed.  We show that in a version of our model the posterior contraction 
rate is not affected by the presence of a fixed number of irrelevant covariates.
Also, an application of Bayesian model averaging to the covariate selection problem delivers posterior contraction rates that are not affected  by irrelevant covariates.
Thus, we can say that the posterior contraction rates we obtain are also adaptive with respect to the dimension of the relevant covariates.

Our results hold for expected total variation and Hellinger distances for conditional densities, where the expectation is taken with respect to the distribution of covariates.  
The use of these distances allows us to easily adapt a general posterior contraction theorem from 
\cite{GhosalGhoshVaart:2000} to the case of a model for conditional distribution only.  
An important part of our proof strategy is to recognize that 
our model for conditional density is consistent with a joint density that is 
a mixture of multivariate normal distributions so that we can exploit  
approximation results for mixtures of multivariate normal distributions obtained in 
\cite{deJongeVanZanten2010} and
\cite{ShenTokdarGhosal2013}.
Our entropy calculations improve considerably the bounds obtained in 
\cite{NoretsPelenis:11}.

We also evaluate the finite sample performance of our conditional density model in Monte Carlo simulations.
The model perform consistently with the established asymptotic properties and compares favorably to a cross-validated kernel conditional density estimator from
\cite{Hall2004}.

The paper is organized as follows.  
Section \ref{sec:main_results} presents the assumptions on the true conditional density, the proposed prior distributions, and the main theorem on posterior convergence rates. 
The prior thickness results are given in Section \ref{sec:Prior_thickness}. 
Section \ref{sec:sieve} describes the sieve construction and entropy calculations. 
An extension of the results to an unbounded covariate space is considered in Section \ref{sec:non_comp}. 
The presence of irrelevant covariates is analyzed in Section \ref{sec:irr_cov}.
Section \ref{sec:simul} presents results of Monte Carlo simulations.
We conclude with a discussion of the results in Section \ref{sec:conclusion}.

\section{Main results}
\label{sec:main_results}

\subsection{Notation} 
Let $\mathcal{Y} \subset \mathbb{R}^{d_y}$ be the response space, $\mathcal{X} \subset \mathbb{R}^{d_x}$ be the covariate space, and $\mathcal{Z}=\mathcal{Y} \times \mathcal{X}$. 
Let $\mathcal{F}$ denote a space of conditional densities with respect to the Lebesgue measure,
\[
\mathcal{F} = \bigg\{ f:\mathcal{Y} \times \mathcal{X} \rightarrow [0, \infty) \mbox{ - Borel measurable, } 
\int f(y|x)dy =1, \; \forall x \in \mathcal{X} \bigg\}.
\]
Suppose $(Y^n,X^n)=(Y_1,X_1,\ldots,Y_n,X_n)$ is a random sample from the joint density $f_0 g_0$, where  $f_0 \in \mathcal{F}$ and
$g_0$ is a density on $\mathcal{X}$ with respect to the Lebesgue measure. 
Let $P_0$ and $E_0$ denote the probability measure and expectation corresponding to $f_0 g_0$.
For $f_1, f_2 \in \mathcal{F}$, 
\[d_h(f_1,f_2) = \left(\int \left(\sqrt{f_1(y|x)}-\sqrt{f_2(y|x)}\right)^2 g_0(x) dy dx \right)^{1/2}\;\mbox{ and }\]  
	\[d_1(f_1,f_2) = \int  |f_1(y|x)-f_2(y|x)| g_0(x) dy dx\]
denote analogs of the Hellinger and total variation distances correspondingly.  Also, let us denote the Hellinger distance for the joint densities by $d_H$. 

Let us denote the largest integer that is strictly smaller than $\beta$ by 
$\lfloor \beta \rfloor$.
For $L:\mathcal{Z}\rightarrow[0,\infty)$, $\tau_0 \geq 0$, and $\beta>0$,
a class of locally H\"older functions, $\mathcal{C}^{\beta,L,\tau_0}$, consists of 
$f:\mathbb{R}^d\rightarrow \mathbb{R}$ such that for $k=(k_1,\ldots,k_d)$, $k_1+\cdots+k_d \leq \lfloor \beta \rfloor$, mixed partial derivative of order $k$, $D^k f$, is finite and for $k_1+\cdots+k_d = \lfloor \beta \rfloor$ and $\Delta z \in \mathcal{Z}$,
\[
|D^k f (z+\Delta z) - D^k f (z)| \leq L(z) ||\Delta z||^{\beta-\lfloor \beta \rfloor} e^{\tau_0 ||\Delta z||^2}.
\]
Operator ``$\lesssim$'' denotes less or equal up to a multiplicative positive constant relation.  $J(\epsilon, A, \rho)$ denotes the $\epsilon$-covering number of the set $A$ with respect to the metric $\rho$. 
For a finite set $A$, let $|A|$ denote the cardinality of $A$.
The set of natural numbers is denoted by  
$\mathbb{N}$. The $m$-dimensional simplex is denoted by $\Delta^{m-1}$.  $I_k$ stands for the $k \times k$ identity matrix.
Let $\phi_{\mu,\sigma}$ denote a multivariate normal density with mean $\mu \in \mathbb{R}^k$ and covariance 
matrix $\sigma^2 I_k$ (or a diagonal matrix with squared elements of $\sigma$ on the diagonal,
when $\sigma$ is a $k$-vector).

\subsection{Assumptions about data generating process}
\label{sec:asns}

First, we assume that $f_0 \in \mathcal{C}^{\beta,L,\tau_0}$.
Second, we assume that $\mathcal{X}=[0,1]^{d_x}$, except for Section \ref{sec:non_comp} where we consider possibly unbounded $\mathcal{X}$.  
Third, $g_0$ is assumed to be bounded above. 
Fourth,  for all $k \leq \lfloor \beta \rfloor$ and some $\varepsilon>0$,
	\begin{equation}
	\label{eq:asnE0Dff0_Lf0}
	\int_{\mathcal{Z}} \left|\frac{D^k f_0(y|x)}{f_0(y|x)}\right|^{(2\beta+\varepsilon)/k} f_0(y|x) dydx< \infty,
	\;
	\int_{\mathcal{Z}} \left|\frac{L(y,x)}{f_0(y|x)}\right|^{(2\beta+\varepsilon)/\beta} f_0(y|x) dydx< \infty.
%	E_0 \left([L/f_0]^{(2\beta+\epsilon)/\beta} \right) < \infty;
	\end{equation} 
	%\begin{equation}
	%\label{eq:E0Lf0}
%E_0 \left([L/f_0]^{(2\beta+\epsilon)/\beta} \right) < \infty;
	%\end{equation} 
Finally, for all $x \in \mathcal{X}$, all sufficiently large $y \in \mathcal{Y}$ and some positive $(c,b,\tau)$, 
\begin{equation}
	\label{eq:asnf0_exp_tails}
f_0(y|x) \leq c \exp(-b ||y||^\tau). 
	\end{equation}

\subsection{Prior}
\label{sec:prior}

The prior, $\Pi$, on $\mathcal{F}$ is defined by a location mixture of normal densities 
\begin{equation}
\label{eq:cond_mix_def}
p(y|x, \theta, m) = \sum_{j=1}^m\frac{ \alpha_j \exp\{-0.5||x-\mu_j^x||^2/\sigma^2 \}   
}
{\sum_{i=1}^m \alpha_i \exp\{-0.5||x-\mu_i^x||^2/\sigma^2 \}   
}\phi_{\mu_j^y,\sigma}(y),
\end{equation}
and a prior on $m \in \mathbb{N}$ and $\theta=(\mu_j^y, \mu_j^x, \alpha_j, j=1,2,\ldots; \sigma)$,
where 
$\mu_j^y \in \mathbb{R}^{d_y}$, $\mu_j^x \in \mathbb{R}^{d_x}$, $\alpha_j \in [0,1]$, $\sigma \in (0,\infty)$. 
The covariate dependent mixing weights are modeled by multinomial logit with restrictions on the coefficients and a common scale parameter $\sigma$.  %To simplify notations, let $p_j(x) =  \alpha_j K_j(x) / \sum_{l=1}^m \alpha_l K_l(x)$ where $K_j(x) = \exp\{-0.5||x-\mu_j^x||^2/\sigma^2 \}$.  
To facilitate simpler notations and shorter proofs, we assume $\sigma$ 
to be the same for all components of $(y,x)$, except for Section \ref{sec:irr_cov}. %\footnote{  
Extensions to component-specific $\sigma$'s, which would result in near optimal posterior contraction rates for anisotropic $f_0$, can be done along the lines of Section 5 in \cite{ShenTokdarGhosal2013}.
%} 

We assume the following conditions on the prior.  For positive constants $a_1, a_2,  \ldots, a_{9}$, the prior for $\sigma$ satisfies 
\begin{eqnarray}  
	\Pi( \sigma^{-2} \geq s)  &\leq& a_1 \exp \{-a_2 s^{a_3} \} \quad \text{for all sufficiently large} \, \,  s > 0 
	\label{eq:asnPrior_sigma1}\\
	\Pi( \sigma^{-2} < s)  &\leq& a_4 s ^{a_5}  \quad \text{for all sufficiently small}  \, \,  s > 0 \label{eq:asnPrior_sigma2}\\
	\Pi\{ s < \sigma^{-2} < s(1+t) \} &\geq& a_6 s^{a_7} t^{a_8} \exp \{-a_9 s^{1/2}\}, \quad s > 0, \quad t \in (0,1). 
	\label{eq:asnPrior_sigma3}
\end{eqnarray}
An example of a prior that satisfies \eqref{eq:asnPrior_sigma1}-\eqref{eq:asnPrior_sigma2} is the inverse Gamma prior 
for $\sigma$.  The usual conditionally conjugate inverse Gamma
prior for $\sigma^2$ satisfies  \eqref{eq:asnPrior_sigma1} and  \eqref{eq:asnPrior_sigma2}, but not 
 \eqref{eq:asnPrior_sigma3}.  \eqref{eq:asnPrior_sigma3} requires the probability to values of $\sigma$ near $0$ to be 
 higher than the corresponding probability for inverse Gamma prior for $\sigma^2$.  
This assumption is in line with the previous work on adaptive posterior contraction rates for mixture models, see 
\cite{KruijerRousseauVaart:09,shen2014adaptive}.
Prior for 
$(\alpha_1,\ldots,\alpha_m)$ given $m$ is Dirichlet$(a/m,\ldots,a/m)$, $a > 0$.
\begin{equation}
	\label{eq:asnPrior_m}
	\Pi(m=i) \propto \exp(-a_{10} i (\log i)^{\tau_1}), i =2, 3, \ldots, \quad a_{10}>0, \tau_1 \geq 0.
\end{equation}
A priori, $\mu_{j}=(\mu_{j}^y,\mu_{j}^x)$'s are independent from other parameters and across $j$,
and $\mu_{j}^y$ is independent of  $\mu_{j}^x$.
Prior density for $\mu_{j}^x$ is bounded away from 0 on $\mathcal{X}$ and equal to 0 elsewhere.	
Prior density for $\mu_{j}^y$ is bounded below for some $a_{12}, \tau_2 > 0$ by 
\begin{equation}
	\label{eq:asnPrior_mu_lb}
	a_{11}\exp(-a_{12} ||\mu_j^y||^{\tau_2} ),
\end{equation}
	and for some $a_{13}, \tau_3>0$ and all sufficiently large $r > 0$,
	\begin{equation}
	\label{eq:asnPrior_mu_tail_ub}
	1- \Pi(\mu_{j}^y \in [-r,r]^{d_y}) \leq \exp(-a_{13} r^{\tau_3}). 
\end{equation}

\subsection{Results}  
To prove the main result, we adapt a general posterior contraction theorem to the case of conditional densities.
We define the Hellinger, total variation, and Kullback-Leibler distances for conditional distributions as special cases of the corresponding distances for the joint densities.  Therefore, the proof of the following result is essentially the same as the proof of Theorem 2.1 in \cite{GhosalVaart:01} and is omitted here. 

\begin{theorem} 
\label{th:gen_contr_rate}
 Let $\epsilon_n>0$ be a sequence such that $n \epsilon_n^2 \to \infty$. Let $\rho$ be $d_h$ or $d_1$.
Suppose $\mathcal{F}_n \subset \mathcal{F}$ is a sieve with the following bound on the metric entropy $J( \epsilon_n, \mathcal{F}_n, \rho)$
	\begin{eqnarray} \label{eq:entropy}
	\log J( \epsilon_n, \mathcal{F}_n, \rho) \leq c_1 n \epsilon_n^2, 
	\end{eqnarray}
%		and for $\tilde{\epsilon}_n \leq \epsilon_n$,
	\begin{eqnarray}\label{eq:sievecomplement}
	\Pi(\mathcal{F}_n^c) \leq c_3 \exp\{ -(c_2+4)n \tilde{\epsilon}_n^2\}, \; \tilde{\epsilon}_n \leq \epsilon_n,
	\end{eqnarray}
	and for a generalized Kullback-Leibler neighborhood 
	\[\mathcal{K}(f_0,\epsilon)=\left \{f: \; \int f_0 g_0 \log(f_0/f)< \epsilon^2, \; \int f_0 g_0 [\log(f_0/f)]^2 < \epsilon^2 \right \},\]
	\begin{equation}
	\label{eq:prior_thick}
	\Pi(\mathcal{K}(f_0,\tilde{\epsilon}_n)) \geq c_4 \exp\{ -c_2 n \tilde{\epsilon}_n^2\}.
	\end{equation}

Then, there exists $M>0$ such that 
\[
\Pi\left( f: \rho(f,f_0) > M \epsilon_n | Y_1,X_1,\ldots, Y_n,X_n\right) \stackrel{P_0^n}{\rightarrow} 0.
\]

\end{theorem}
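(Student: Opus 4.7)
My plan is to reduce the statement to the standard joint density contraction theorem via the observation that the three metric/divergence quantities in the hypotheses all admit a natural reformulation in terms of the joint density. Specifically, $d_h(f_1,f_2)$ equals the Hellinger distance $d_H(f_1 g_0, f_2 g_0)$ between the joint densities, $d_1(f_1,f_2)$ equals the total variation distance between $f_1 g_0$ and $f_2 g_0$, and $\mathcal{K}(f_0,\epsilon)$ coincides with the usual Kullback--Leibler type neighborhood of the joint $f_0 g_0$ among joint densities of the form $f g_0$. Since $\Pi$ is placed only on the conditional density $f$ and $g_0$ is left unmodeled, the posterior can be rewritten by inserting $\prod_i g_0(X_i)$ into both numerator and denominator without change, so it agrees with the posterior one obtains by treating $(Y_i,X_i)$ as i.i.d.\ from the joint density $f g_0$ under the push-forward prior $\widetilde\Pi$ supported on $\{f g_0 : f \in \mathcal{F}\}$.

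Given this identification, I would verify the hypotheses of Theorem 2.1 of \cite{GhosalVaart:01} for the induced joint-density problem. The induced sieve $\widetilde{\mathcal{F}}_n = \{f g_0 : f \in \mathcal{F}_n\}$ inherits \eqref{eq:entropy} with $d_H$ or total variation in place of $\rho$; the push-forward prior mass on $\widetilde{\mathcal{F}}_n^c$ equals $\Pi(\mathcal{F}_n^c)$, so \eqref{eq:sievecomplement} is preserved verbatim; and \eqref{eq:prior_thick} is exactly the standard prior thickness at $f_0 g_0$. The rest follows the usual three-step strategy: (i) cover the set $\widetilde{\mathcal{F}}_n \cap \{fg_0 : d_H(fg_0, f_0 g_0) > M\epsilon_n\}$ by $d_H$-balls of radius proportional to $\epsilon_n$ and build exponentially powerful Le Cam--Birg\'e tests on each ball, using \eqref{eq:entropy} to bound the number of balls; (ii) derive the evidence lower bound $\int \prod_i (f/f_0)(Y_i|X_i)\, d\Pi(f) \gtrsim \exp\{-(c_2+2) n \tilde\epsilon_n^2\}$ on a $P_0^n$-event of probability tending to one, in the standard way from \eqref{eq:prior_thick} (cf.\ Lemma 8.1 in \cite{GhosalGhoshVaart:2000}); (iii) split the posterior mass on $\{\rho(f,f_0) > M\epsilon_n\}$ into a sieve part, controlled by combining the tests with the evidence bound, and a sieve-complement part, controlled by \eqref{eq:sievecomplement} together with the evidence bound. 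The condition $n\epsilon_n^2 \to \infty$ and $\tilde\epsilon_n \leq \epsilon_n$ deliver the requisite vanishing of both contributions.

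The only conceptual point that needs checking, and which is resolved entirely by the metric identities in the first paragraph, is whether tests and evidence bounds constructed at the joint-density level transfer back to statements about the posterior on the conditional density. They do, because the posterior is literally the same object under either viewpoint, and any test separating $f_0 g_0$ from $f g_0$ in $d_H$ is simultaneously a test separating $f_0$ from $f$ in $d_h$ with the same separation constant; the corresponding statement for $d_1$ and total variation is identical. Consequently no new technical ingredient beyond the standard joint-density result is required, which is why the authors relegate the formal proof to a reference. The main obstacle, insofar as there is one, is bookkeeping care with the KL neighborhood to make sure the integrated weight $g_0$ plays no role beyond being a nonnegative density, which is immediate since $g_0$ cancels out of every ratio of likelihoods that appears in the argument.
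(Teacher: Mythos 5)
Your proposal is correct and follows essentially the same route as the paper, which omits the proof precisely because the identification of $d_h$, $d_1$, and $\mathcal{K}(f_0,\epsilon)$ with the corresponding joint-density quantities for $fg_0$ versus $f_0g_0$ (with $g_0$ cancelling from the posterior) reduces the statement to Theorem 2.1 of \cite{GhosalVaart:01}. No discrepancies to report.
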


The theorem's assumptions \eqref{eq:entropy}-\eqref{eq:prior_thick} have the following interpretation.
The prior probability outside of an increasing sequence of compact subsets of the parameter space 
is required to be appropriately small and
the prior needs to put sufficient probability on the Kullback-Leibler neighborhoods of the true density.
The theorem essentially describes a trade-off between these two requirements.
The theorem is formulated with $\tilde{\epsilon}_n$ and $\epsilon_n$ to provide more insight on how 
bounds on the sieve's entropy and bounds on the prior probability of the Kullback-Leibler neighborhoods of $f_0$ and the sieve's complement
differently restrict the contraction rate.
%For a more detailed discussion of the theorem, we refer the reader to \cite{GhosalVaart2013theory}.

\begin{theorem} 
\label{th:rate_cond_dens}
Under the assumptions in Sections \ref{sec:asns}-\ref{sec:prior},  
the sufficient conditions of Theorem \ref{th:gen_contr_rate} hold with 
\[
\epsilon_n=n^{-\beta/(2\beta+d)} (\log n)^t,
\] 
where
$t > t_0 +  \max \{0, (1- \tau_1)/2\}$, 
$t_0=  (ds + \max\{\tau_1,1,\tau_2/\tau\}) / (2 + d/\beta)$, 
$d=d_y+d_x$,
and  $s = 1 + 1/\beta + 1/\tau$. 
\end{theorem}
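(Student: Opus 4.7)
The plan is to verify the three sufficient conditions of Theorem~\ref{th:gen_contr_rate}---the entropy bound \eqref{eq:entropy}, the sieve-complement bound \eqref{eq:sievecomplement}, and the prior-thickness bound \eqref{eq:prior_thick}---for $\epsilon_n=n^{-\beta/(2\beta+d)}(\log n)^t$ and a slightly smaller $\tilde\epsilon_n$ of the same form but with a smaller logarithmic exponent. The organising observation is that the covariate-dependent mixture in \eqref{eq:cond_mix_def} is nothing but the conditional density derived from the joint normal mixture
\[
q_\theta(y,x) \;=\; \sum_{j=1}^{m} \alpha_j\,\phi_{\mu_j^y,\sigma}(y)\,\phi_{\mu_j^x,\sigma}(x),
\]
i.e., $p(y|x,\theta,m)=q_\theta(y,x)/q_\theta(x)$ with $q_\theta(x)=\sum_j\alpha_j\phi_{\mu_j^x,\sigma}(x)$. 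This bridge lets me import the joint-mixture approximation and entropy technology of \cite{ShenTokdarGhosal2013} and \cite{deJongeVanZanten2010} into the conditional setting \emph{without} assuming any smoothness of $g_0$.

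For the prior-thickness condition \eqref{eq:prior_thick}, I would fix once and for all an auxiliary smooth density $\bar g$ on $[0,1]^{d_x}$ that is bounded above and away from zero, and form $\bar f(y,x)=f_0(y|x)\bar g(x)$. Because $f_0\in\mathcal{C}^{\beta,L,\tau_0}$, $\bar f$ lies in the same H\"older-type class up to a modified envelope. Invoking the normal-mixture approximation of \cite{ShenTokdarGhosal2013} then produces parameters $\theta^*$ with $\sigma^*\asymp\tilde\epsilon_n^{1/\beta}$, number of components $m^*\asymp\tilde\epsilon_n^{-d/\beta}(\log(1/\tilde\epsilon_n))^{ds}$, and $\mu_j^{y*}$ confined to a box of radius of order $(\log(1/\tilde\epsilon_n))^{1/\tau}$ (via \eqref{eq:asnf0_exp_tails}), such that $q_{\theta^*}$ approximates $\bar f$ in the KL-type sense. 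Since $\bar g$ is bounded below and the $m^*$ covariate centres tile $[0,1]^{d_x}$ at scale $\sigma^*$, the denominator $q_{\theta^*}(x)$ stays uniformly bounded below on $\mathcal{X}$, so dividing by it transfers the joint approximation into an approximation of $f_0$ by $p(\cdot|\cdot,\theta^*,m^*)$ in the conditional KL and squared-log-ratio distances appearing in $\mathcal{K}(f_0,\cdot)$; here the moment condition \eqref{eq:asnE0Dff0_Lf0} is essential to bound the log-ratio moments. A neighbourhood of $\theta^*$ is then shown to have prior mass at least $\exp(-c_2 n\tilde\epsilon_n^2)$ by combining the Dirichlet mass on the active block of $\alpha$-coordinates, the lower bound \eqref{eq:asnPrior_mu_lb} for each $\mu_j^y$, the bounded-below prior for $\mu_j^x$ on $[0,1]^{d_x}$, the small-$\sigma$ bound \eqref{eq:asnPrior_sigma3}, and the tail of \eqref{eq:asnPrior_m} on $m$; balancing these logarithmic factors yields $\tilde\epsilon_n\asymp n^{-\beta/(2\beta+d)}(\log n)^{t_0}$.

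For \eqref{eq:sievecomplement} and \eqref{eq:entropy} I would take
\[
\mathcal{F}_n\;=\;\bigl\{p(\cdot|\cdot,\theta,m)\,:\, m\le m_n,\;\mu_j^y\in[-H_n,H_n]^{d_y},\;\mu_j^x\in[0,1]^{d_x},\;\underline\sigma_n\le\sigma\le\overline\sigma_n\bigr\},
\]
with $m_n\asymp n\epsilon_n^2/\log n$ and with $H_n$, $\underline\sigma_n^{-1}$ and $\overline\sigma_n$ at most polynomial in $n$, calibrated so that \eqref{eq:asnPrior_sigma1}--\eqref{eq:asnPrior_sigma2}, \eqref{eq:asnPrior_m} and \eqref{eq:asnPrior_mu_tail_ub} together force $\Pi(\mathcal{F}_n^c)\le\exp\{-(c_2+4)n\tilde\epsilon_n^2\}$. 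For the entropy I would cover the parameter space by a grid of mesh of order $\underline\sigma_n\epsilon_n$ on each mean and order $\epsilon_n$ on each $\alpha_j$ and on $\sigma$, and prove that the induced perturbation of $p(y|x,\theta,m)$ is bounded in $d_1$ uniformly in $x$. The key point, and the step that sharpens the bound of \cite{NoretsPelenis:11}, is that with the constraint $\sigma\ge\underline\sigma_n$ in force the denominator $q_\theta(x)$ is uniformly bounded away from zero on the sieve, so the multinomial-logit nonlinearity is Lipschitz in the parameters; counting grid points then gives $\log J(\epsilon_n,\mathcal{F}_n,\rho)\lesssim m_n\log n\lesssim n\epsilon_n^2$.

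The main obstacle, and the source of the specific exponent $t$, is the prior-thickness step. Converting a joint approximation of $\bar f$ into a \emph{conditional} approximation of $f_0$ requires simultaneous uniform lower control of $q_{\theta^*}(x)$ on all of $\mathcal{X}$ and the KL-moment bounds provided by \eqref{eq:asnE0Dff0_Lf0} together with the tail control \eqref{eq:asnf0_exp_tails}; the prior's reluctance to place $\sigma$ near zero and $m$ large, as quantified by \eqref{eq:asnPrior_sigma3} and \eqref{eq:asnPrior_m}, forces several logarithmic factors to be balanced, producing $t_0=(ds+\max\{\tau_1,1,\tau_2/\tau\})/(2+d/\beta)$ and the additive correction $\max\{0,(1-\tau_1)/2\}$ in the final exponent.
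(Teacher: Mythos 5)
Your overall architecture (bridge to the joint normal mixture, Shen--Tokdar--Ghosal approximation of $f_0\bar g$ with $\bar g$ the uniform density, KL-moment conditions \eqref{eq:asnE0Dff0_Lf0} and tail condition \eqref{eq:asnf0_exp_tails} to convert Hellinger control into control of the Kullback--Leibler functionals, and a sieve of the form \eqref{eq:sieve}) matches the paper's. But your entropy argument contains a genuine gap, and it is precisely the point on which the whole adaptive rate turns. You propose to make the multinomial-logit map Lipschitz by noting that on the sieve the denominator $q_\theta(x)=\sum_i\alpha_i\exp\{-\|x-\mu_i^x\|^2/(2\sigma^2)\}$ is ``uniformly bounded away from zero.'' The only available lower bound is of order $\underline\alpha\,\exp\{-d_x/(2\underline\sigma_n^2)\}$ (all centers may sit far from a given $x$), so with $\underline\sigma_n=n^{-1/(2a_3)}$ the resulting Lipschitz constant is $\exp\{O(n^{1/a_3})\}$. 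A grid fine enough to beat that constant has log-cardinality of order $H\,n^{1/a_3}$, which is not $O(n\epsilon_n^2)=O(H\log n)$; and no admissible choice of $\underline\sigma_n$ repairs this, since \eqref{eq:sievecomplement} together with \eqref{eq:asnPrior_sigma1} forces $\underline\sigma_n^{-2}\gtrsim(n\tilde\epsilon_n^2)^{1/a_3}$. This is exactly the route of \cite{NoretsPelenis:11} that the paper's footnote flags as too weak for adaptive rates, so your claim that it ``sharpens'' their bound has the logic reversed.

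The correct argument never lower-bounds the denominator. It bounds the perturbation of the mixing weights by the \emph{relative} errors $\max_j|K_j-\tilde K_j|/K_j$ and $\max_j|\alpha_j-\tilde\alpha_j|/\alpha_j$, using
$\big(\sum_j\alpha_j|K_j-\tilde K_j|+\sum_j\tilde K_j|\alpha_j-\tilde\alpha_j|\big)/\sum_j\alpha_jK_j\le\max_j|K_j-\tilde K_j|/K_j+\max_j|\alpha_j-\tilde\alpha_j|/\alpha_j+\cdots$.
Since $|K_j-\tilde K_j|/K_j=|1-e^{r}|$ with $r$ the difference of the exponents, a mesh of the merely polynomial size $\underline\sigma_n^2\epsilon_n$ on the $\mu_j^x$ and a multiplicative (geometric) grid on $\sigma$ already give $r=O(\epsilon_n)$, hence log-covering number $O(H\log n)$. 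Controlling $|\alpha_j-\tilde\alpha_j|/\alpha_j$ in turn requires a multiplicative grid on the simplex and the sieve constraint $\alpha_j\ge\underline\alpha$ with $\underline\alpha=e^{-nH}$, which your sieve omits (your absolute mesh of order $\epsilon_n$ on the $\alpha_j$ is not enough, for the same reason: the term $\tilde K_j|\alpha_j-\tilde\alpha_j|/\sum_i\alpha_iK_i$ is only controlled through the ratio $|\alpha_j-\tilde\alpha_j|/\alpha_j$); adding this constraint then requires the Dirichlet small-ball estimate $\Pi(\alpha_j<\underline\alpha\mid m)\lesssim\underline\alpha^{a/m}$ in the bound for $\Pi(\mathcal{F}_n^c)$. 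Your prior-thickness sketch is essentially the paper's, although the assertion that $q_{\theta^*}(x)$ is uniformly bounded below should be weakened to a bound polynomial in $\tilde\epsilon_n$ (via $\min_j\alpha_j\ge\tilde\epsilon_n^{4db_1}/2$ and the tiling of $\{\|z\|\le a_{\sigma_n}\}$), which suffices because only its logarithm enters the KL bounds.
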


The proof of the theorem is divided into two main parts.  First,
we establish the prior thickness condition \eqref{eq:prior_thick}
in Theorem \ref{th:prior_thickness}.
Then, the conditions on the sieve are established in Theorems \ref{th:sieve} and \ref{tm:sieve_n}.

\section{Prior thickness}
\label{sec:Prior_thickness}

The prior thickness condition is formally proved in Theorem \ref{th:prior_thickness}.
Let us briefly describe the main steps of the proof 
placing it in the context of the previous literature.
First, we recognize that
the covariate dependent mixture defined in \eqref{eq:cond_mix_def} is consistent with the following 
mixture of normals for the joint distribution of $(y,x)$,
\begin{equation}
\label{eq:joint_mix_def}
p(y,x| \theta, m) = \sum_{j=1}^m \alpha_j \phi_{\mu_j,\sigma}(y,x),
\end{equation}
where $\mu_j = (\mu_j^y,\mu_j^x)$.

Second, we bound the Hellinger distance between conditional densities $f_0(y|x)$ and 
$p(y|x,\theta, m)$ by a distance between the joint densities 
$f_0(y|x)u(x)$ and 
$p(y,x|\theta, m)$, where $u(x)$ is a uniform density on 
$\mathcal{X}$.  It is important to note that $f_0(y|x) u(x)$ has the same smoothness level as $f_0(y|x)$.

Third, we obtain a suitable approximation for the joint distribution
$f_0(y|x) u(x)$ by mixtures $p(y,x|\theta, m)$ using modified results  
from \cite{ShenTokdarGhosal2013}.
The idea of the approximation argument is introduced in 
\cite{Rousseau:10} in the context of approximation 
of a univariate density by mixtures of beta densities.
\cite{KruijerRousseauVaart:09} use this idea for obtaining approximation results for mixtures of univariate normal densities.
\cite{deJongeVanZanten2010} extend the idea to approximation of multivariate functions, 
but the functions they approximate are not necessarily densities and their weights $\alpha_j$'s could be negative.  \cite{ShenTokdarGhosal2013} use the same techniques with an additional step
to approximate multivariate densities by mixtures with 
$\alpha_j$'s belonging to a simplex.  
It is not clear whether the mixing weights they obtain are actually non-negative.
In Lemma \ref{lm:positive_mix_dens} in the appendix, we state a modified version of their 
Theorem 3 that ensures non-negativity of the weights.
With a suitable approximation at hand, verification 
of condition \eqref{eq:prior_thick} proceeds along the lines of similar results in 
\cite{GhosalVaart:01}, \cite{GhosalVandervaart:07},
\cite{KruijerRousseauVaart:09}, and, especially, \cite{ShenTokdarGhosal2013}, with modifications necessary to handle the case of conditional distributions.

\begin{theorem} \label{th:prior_thickness}
Suppose the assumptions from Sections \ref{sec:asns}-\ref{sec:prior} hold.
Then, for any $C > 0$ and all sufficiently large $n$,
\begin{eqnarray}\label{eq:KL}
\Pi( \mathcal{K}(f_0,  \tilde{\epsilon}_n)) \geq \exp \{-C n \tilde{\epsilon}_n^2 \},
\end{eqnarray}  
where $\tilde{\epsilon}_n =  n^{-\beta/(2\beta + d)} (\log n)^t$, 
$t > (ds + \max\{\tau_1,1,\tau_2/\tau\}) / (2 + d/\beta)$, 
$s = 1 + 1/\beta + 1/\tau$, and
$(\tau,\tau_1, \tau_2)$ are defined in Sections \ref{sec:asns} and \ref{sec:prior}.
\end{theorem}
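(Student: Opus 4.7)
The plan is to follow the roadmap sketched in the paragraph preceding the theorem, adapting the Shen--Tokdar--Ghosal (2013) argument from joint-density estimation to our conditional-density setting. Write $q(x\mid\theta,m)=\sum_{j=1}^m\alpha_j\phi_{\mu_j^x,\sigma}(x)$, so that the conditional mixture \eqref{eq:cond_mix_def} equals $p(y\mid x,\theta,m)=p(y,x\mid\theta,m)/q(x\mid\theta,m)$ with $p(y,x\mid\theta,m)$ the joint mixture of \eqref{eq:joint_mix_def}. I would augment $f_0$ to a joint density $h_0(y,x):=f_0(y\mid x)u(x)$, where $u$ is uniform on $[0,1]^{d_x}$. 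Since $u$ is smooth with compact support, $h_0$ inherits the class $\mathcal{C}^{\beta,L,\tau_0}$ of $f_0$, the moment conditions \eqref{eq:asnE0Dff0_Lf0} transfer to analogous conditions for $h_0$ under $h_0$, and the tail bound \eqref{eq:asnf0_exp_tails} continues to hold because $\mathcal{X}$ is bounded.

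Next I would invoke Lemma~\ref{lm:positive_mix_dens} at bandwidth $\bar\sigma_n$ chosen so that $\bar\sigma_n^\beta\asymp\tilde\epsilon_n$ up to logarithms, i.e.\ $\bar\sigma_n\asymp n^{-1/(2\beta+d)}(\log n)^{t/\beta}$. This produces a target configuration $(m^\ast,\alpha^\ast,\mu^\ast)$ with $m^\ast\asymp\bar\sigma_n^{-d}(\log(1/\bar\sigma_n))^{ds}$, non\-negative weights in the simplex and centres $\mu_j^\ast\in[-a_n,a_n]^{d_y}\times[0,1]^{d_x}$ with $a_n\asymp(\log n)^{1/\tau}$, such that $\|h_0-p(\cdot,\cdot\mid\theta^\ast,m^\ast)\|_1$ and the $h_0$-weighted squared log-ratio integral are of order $\bar\sigma_n^{2\beta}$ up to logarithms. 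The configuration can moreover be arranged so that $\mu_j^{\ast,x}$'s are spread across $[0,1]^{d_x}$ at spacing $\lesssim\bar\sigma_n$, which guarantees that $q(x\mid\theta^\ast,m^\ast)$ is bounded away from $0$ and $\infty$ on $[0,1]^{d_x}$.

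Passing from joint to conditional is handled by the identity
\[
f_0(y\mid x)-p(y\mid x,\theta,m)=\frac{h_0(y,x)-p(y,x\mid\theta,m)}{q(x\mid\theta,m)}+f_0(y\mid x)\cdot\frac{q(x\mid\theta,m)-u(x)}{q(x\mid\theta,m)},
\]
valid for $x\in[0,1]^{d_x}$. The first summand is controlled by the joint approximation of step two and the lower bound on $q$; the second summand is controlled by integrating the joint approximation in $y$. To upgrade the resulting $L_1$/sup-norm control into membership in $\mathcal{K}(f_0,\tilde\epsilon_n)$, I would follow the standard Ghosal--van der Vaart / Kruijer--Rousseau--van der Vaart split at $\|y\|\le T_n\asymp(\log n)^{1/\tau}$: on the central region combine the pointwise bound with a uniform lower bound $p(y\mid x,\theta,m)\gtrsim\exp(-c(\log n)^{2/\tau}/\bar\sigma_n^2)$ to dominate $\log(f_0/p)$ and its square by $\tilde\epsilon_n^2$ up to log factors, and on the tail use \eqref{eq:asnf0_exp_tails} together with the moment conditions \eqref{eq:asnE0Dff0_Lf0} and H\"older's inequality to render the tail contribution negligible. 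This produces a parameter set $A_n$ of $(m,\theta,\sigma)$ on which $p(\cdot\mid\cdot,\theta,m)\in\mathcal{K}(f_0,\tilde\epsilon_n)$, the set being a product of a small Dirichlet cell around $\alpha^\ast$, a product of small balls around the $\mu_j^\ast$'s, a narrow window for $\sigma^{-2}$ around $\bar\sigma_n^{-2}$, and $\{m=m^\ast\}$.

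It remains to lower-bound $\Pi(A_n)$. The required estimates are: $\Pi(m=m^\ast)\ge\exp(-C_1m^\ast(\log m^\ast)^{\tau_1})$ from \eqref{eq:asnPrior_m}; the $\sigma$-window has probability at least $\exp(-C_2\bar\sigma_n^{-1})$ by \eqref{eq:asnPrior_sigma3}; the Dirichlet cell around $\alpha^\ast$ contributes $\exp(-C_3m^\ast\log m^\ast)$ via a standard small-ball Dirichlet bound (as in Lemma A.2 of Ghosal--van der Vaart 2007); and the $\mu_j$ balls contribute $\exp(-C_4m^\ast(\log n)^{\tau_2/\tau})$ using the prior lower bound \eqref{eq:asnPrior_mu_lb} together with $|\mu_j^{\ast,y}|\lesssim(\log n)^{1/\tau}$. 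Multiplying and taking logarithms, the dominant term is of order $-m^\ast(\log m^\ast)^{\max\{\tau_1,1,\tau_2/\tau\}}$, and because $m^\ast\asymp n\tilde\epsilon_n^2/(\log n)^{2t-ds}$ the inequality $\log\Pi(A_n)\ge -Cn\tilde\epsilon_n^2$ holds precisely under $t>(ds+\max\{\tau_1,1,\tau_2/\tau\})/(2+d/\beta)$. The main obstacle I anticipate is in the third paragraph: to convert the joint approximation into a uniform KL-type conditional bound one must simultaneously keep $\|h_0-p(\cdot,\cdot\mid\theta,m)\|$ small and $q(x\mid\theta,m)$ uniformly bounded below on $[0,1]^{d_x}$, which is the genuinely new ingredient relative to Shen--Tokdar--Ghosal and requires that Lemma~\ref{lm:positive_mix_dens} yield a configuration with sufficiently dense $\mu^x$-centres inside $[0,1]^{d_x}$.
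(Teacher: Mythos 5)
Your overall architecture coincides with the paper's: augment $f_0$ to the joint density $f_0u$ with $u$ uniform on $[0,1]^{d_x}$, invoke the (modified) Shen--Tokdar--Ghosal approximation at bandwidth $\sigma_n\asymp(\tilde\epsilon_n/\log(1/\tilde\epsilon_n))^{1/\beta}$, control the Kullback--Leibler functionals via a lower bound on the ratio $p(y|x,\theta,m)/f_0(y|x)$ split at $\|z\|\le a_{\sigma_n}$, and multiply out the prior small-ball probabilities exactly as you list them. The prior-mass accounting in your last paragraph is the paper's computation.

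The one step where you diverge is the joint-to-conditional reduction, and there your proposal has a genuine gap --- one you yourself flag as the ``main obstacle'' but do not close. Your decomposition of $f_0(y|x)-p(y|x,\theta,m)$ requires the mixture marginal $q(x|\theta,m)=\sum_j\alpha_j\phi_{\mu_j^x,\sigma}(x)$ to be uniformly bounded \emph{below} on $[0,1]^{d_x}$, and not only at the target configuration $\theta^\ast$ but for every $\theta$ in the whole prior neighborhood $A_n$ (where some weights are as small as $\tilde\epsilon_n^{4db_1}/2$ and the kernels lose mass near the boundary of the cube). The Hellinger bound on $f_0u-p(\cdot|\theta^\ast,m)$ only gives $L_1$ control of $q-u$, not a pointwise lower bound, so this would need a separate argument about the density of the $\mu^x$-centres that Lemma~\ref{lm:positive_mix_dens} does not by itself supply. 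The paper sidesteps the issue entirely with two observations. First, Lemma~\ref{lm:dH_cond_bdd_dHjoint} gives $d_h^2(f_0,p(\cdot|\cdot,\theta,m))\le C\,d_H^2(f_0u,p(\cdot|\theta,m))$ with \emph{no} hypothesis on the mixture marginal: the cross term $\int(\sqrt{q}-\sqrt{u})^2$ is absorbed because $\int\sqrt{f_0(y|x)p(y|x,\theta,m)}\,dy\le 1$. Second, for the ratio lower bound one only needs an \emph{upper} bound on the marginal, $q(x|\theta,m)\le(2\pi)^{-d_x/2}\sigma^{-d_x}$, which yields $p(y|x,\theta,m)\ge(2\pi)^{d_x/2}\sigma^{d_x}p(y,x|\theta,m)$ and then the bound $\lambda_n\asymp\tilde\epsilon_n^{4db_1}\sigma_n^{-d_y}$ on the central region. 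I recommend replacing your decomposition with these two devices; otherwise you must prove the uniform lower bound on $q$ over all of $A_n$, which is additional work the established argument makes unnecessary. One further small point: the conversion from the Hellinger bound and the truncated log-ratio integrals to the two moments defining $\mathcal{K}(f_0,\tilde\epsilon_n)$ is done by Lemma~\ref{lm:dH_KL}; your sketch of the central/tail split should be routed through that lemma rather than through a direct pointwise domination.
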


\begin{proof}

By Lemma \ref{lm:dH_cond_bdd_dHjoint}, for $p(\cdot | \cdot, \theta, m)$ defined in \eqref{eq:joint_mix_def},
\begin{eqnarray}
d_h^2 (f_0, p(\cdot | \cdot, \theta, m)) &=&  \int ( \sqrt{f_0(y | x)} -  \sqrt{p(y | x, \theta, m)})^2 g_0(x) dydx  \notag \\
&\leq& C_1 \int  ( \sqrt{f_0(y | x)u(x)} -  \sqrt{p(y, x | \theta, m)})^2  d(y, x)  \notag \\
\label{eq:cond_bd_joint}
&=& C_1 d_H^2(f_0 u, p(\cdot |\theta, m) ),
\end{eqnarray}
where $u(x)$ is a uniform density on $\mathcal{X}$.  

For 
$\sigma_n = [\tilde{\epsilon}_n / \log (1/ \tilde{\epsilon}_n) ]^{1/\beta}$,
$\varepsilon$ defined in \eqref{eq:asnE0Dff0_Lf0},
a sufficiently small $\delta>0$,
$b$ and $\tau$ defined in \eqref{eq:asnf0_exp_tails},
$a_0 = \{ (8\beta + 4\varepsilon +16)/(b \delta)\}^{1/\tau}$,
$a_{\sigma_n} = a_0 \{\log (1/\sigma_n) \}^{1/\tau}$, 
and $b_1 > \max \{1, 1/ 2\beta \}$ satisfying $\tilde{\epsilon}_n^{b_1}  \{  \log (1/ \tilde{\epsilon}_n) \}^{5/4} \leq \tilde{\epsilon}_n$,
the proof of Theorem 4 in \cite{ShenTokdarGhosal2013} implies the following three claims.
First,
there exists a partition of $\{z \in \mathcal{Z}: ||z|| \leq a_{\sigma_n}\}$,
$\{U_j, j=1,\ldots,K\}$ such that for $j=1,\ldots,N$,
$U_j$ is a ball with diameter $\sigma_n \tilde{\epsilon}_n^{2 b_1}$
and center $z_j = (x_j, y_j)$;
for $j=N+1,\ldots,K$,  
$U_j$ is a set with a diameter bounded above by $\sigma_n$;
$1 \leq N < K \leq C_2 \sigma_n^{-d} \{\log (1/ \tilde{\epsilon}_n) \}^{d +d/\tau}$, 
where $C_2>0$ does not depend on $n$.
Second, there exist 
$\theta^\star = \{\mu_j^\star, \alpha_j^\star, j = 1,2,\ldots; \sigma_n\}$ with
$\alpha_j^\star=0$ for $j > N$, $\mu_j^\star=z_j$ for $j=1,\ldots,N$, and
$\mu_j^\star \in U_j$ for $j=N+1,\ldots,K$
 such that 
for $m=K$ and a positive constant $C_3$,
\begin{equation}
\label{eq:f0upsbeta}
d_H(f_0 u, p(\cdot |\theta^\star, m) ) \leq C_3 \sigma_n^\beta.
\end{equation}
Third, there exists constant $B_0>0$ such that
\begin{equation}
\label{eq:P0_zgeqa}
P_0(\norm{z} > a_{\sigma_n}) \leq B_0 \sigma_n^{4\beta + 2\varepsilon +8}.
\end{equation}

For $\theta$ in set
\begin{align*}
	S_{\theta^\star}=&\big \{
	(\mu_j, \alpha_j, \, j=1,2,\ldots; \sigma): 
	 \; \mu_j  \in U_j, j=1, \ldots, K, \\
	& \sum_{j=1}^K \abs{\alpha_j - \alpha_j^\star} \leq 2\tilde{\epsilon}_n^{2db_1}, \, \min_{j=1, \ldots, K} \alpha_j \geq \tilde{\epsilon}_n^{4db_1}/2, 
	%\\	& 
	\sigma^2 \in [ \sigma_n^{2}/(1+ \sigma_n^{2\beta}), \sigma_n^{2} ]
\big \},
\end{align*}
 we have 
\begin{align*}
& d_H^2(p(\cdot | \theta^\star, m),  p(\cdot | \theta, m))  \leq 
\norm{\sum_{j=1}^K \alpha_j^\star \phi_{\mu_j^\star,\sigma_n} - \sum_{j=1}^K \alpha_j \phi_{\mu_j,\sigma}}_1\\
&\leq  \sum_{j=1}^K \abs{\alpha_j^\star-\alpha_j} 
+ \sum_{j=1}^N \alpha_j^\star 
\left[\norm{\phi_{\mu_j^\star,\sigma_n} - \phi_{\mu_j,\sigma_n}}_1
+
\norm{\phi_{\mu_j,\sigma_n} - \phi_{\mu_j,\sigma}}_1 \right].
%\\
%&\leq&   2\tilde{\epsilon}_n^{2d b_1} + \sum_{j=1}^N p_j \frac{\norm{\mu_j - z_j}}{\sigma}  \leq   2\tilde{\epsilon}_n^{2b_1}. 
\end{align*}
For $j=1,\ldots,N$,
$
\norm{\phi_{\mu_j^\star,\sigma_n} - \phi_{\mu_j,\sigma_n}}_1 \leq ||\mu_j^\star - \mu_j||/\sigma_n \leq 
\tilde{\epsilon}_n^{2b_1}
$.  Also, 
\begin{equation}
\label{eq:bd4sigmas}
\norm{\phi_{\mu_j, \sigma_n} - \phi_{\mu_j, \sigma}}_1  
\leq \sqrt{d/2} 
\abs{\frac{\sigma_n^2}{\sigma^2} - 1 - \log \frac{\sigma_n^2}{\sigma^2}}^{1/2} \leq  C_4 \sqrt{d/2} \abs{\frac{\sigma_n^2}{\sigma^2} - 1}
\lesssim  \sigma_n^{2\beta}, 
\end{equation}
where the penultimate inequality follows from the fact that 
$\abs{\log x - x +1} \leq C_4\abs{x-1}^2$ for $x$ in a neighborhood of 1 and some $C_4 > 0$.   Hence,   
 $d_H (p(\cdot | \theta, m), p(\cdot | \theta^\star, m)) \lesssim \sigma_n^{\beta}$  and, 
by \eqref{eq:cond_bd_joint}, \eqref{eq:f0upsbeta} and the triangle inequality, 
$d_h (f_0, p(\cdot | \cdot, \theta, m)) \leq C_5 \sigma_n^\beta$
for some $C_5>0$, all $\theta \in S_{\theta^\star}$, and $m=K$.

Next, for $\theta \in S_{\theta^\star}$, let us consider a lower bound on the ratio 
$p(y | x, \theta, m)/f_0(y | x)$.  
Note that $\sup_{y,x} f_0(y | x)<\infty$ and
$p(y | x, \theta, m) \geq \sigma^{d_x} p(y, x | \theta, m)$.
For $z \in \mathcal{Z}$ with $\norm{z} \leq a_{\sigma_n}$, there exists $J \leq K$ for which 
$||z-\mu_J || \leq \sigma_n$. Thus, 
for all sufficiently large $n$ such that $\sigma_n^2/\sigma^2 \leq 2$,
$p(z | \theta, m) \geq \min_j \alpha_j \cdot \phi_{\mu_J,\sigma} (z) \geq
[\tilde{\epsilon}_n^{4db_1}/2] \cdot \sigma_n^{-d} e^{-1} / (2\pi)^{d/2}$ and 
\begin{equation}
\label{eq:lambda_def}
\frac{p(y | x, \theta, m)}{f_0(y | x)} \geq
C_6 \tilde{\epsilon}_n^{4db_1} \sigma_n^{-d_y}, \mbox{ for some } C_6>0.
\end{equation}
For $z \in \mathcal{Z}$ with $\norm{z} > a_{\sigma_n}$, 
$\norm{ z - \mu_j}^2 \leq 2 (\norm{z}^2 + \norm{\mu}^2) \leq 4\norm{z}^2$
for all $j=1,\ldots,K$. Thus, for all sufficiently large $n$,
$
p(z | \theta, m) \geq \sigma_n^{-d} \exp (-4\norm{z}^2/\sigma_n^2) / (2\pi)^{d/2}
$ and 
\begin{equation*}
\frac{p(y | x, \theta, m)}{f_0(y | x)} \geq
C_7 \sigma_n^{-d_y} \exp (-4\norm{z}^2/\sigma_n^2), \mbox{ for some } C_7>0.
\end{equation*}

Denote the lower bound in \eqref{eq:lambda_def} by $\lambda_n$ and consider all sufficiently large $n$ such that 
$\lambda_n < e^{-1}$.
For any $\theta \in S_{\theta^\star}$, 
\begin{align*}
& \int \bigg( \log \frac{f_0(y|x)}{p(y|x, \theta, m)}\bigg)^2 
1\left \{ \frac{p(y|x, \theta, m)}{f_0(y|x)} < \lambda_n \right\}
f_0(y|x) g_0(x) dydx \\
& =
\int \bigg( \log \frac{f_0(y|x)}{p(y|x, \theta, m)}\bigg)^2 1\left \{ \frac{p(y|x, \theta, m)}{f_0(y|x)} < \lambda_n, || (y,x)|| > a_{\sigma_n} \right\} f_0(y|x) g_0(x) dydx 
\\
& \leq
\frac{4}{\sigma_n^4} \int_{\norm{z} > a_{\sigma_n}} 
\norm{z}^4 f_0 g_0  dz \leq \frac{4}{\sigma_n^4} E_0(\norm{Z}^8)^{1/2} (P_0(\norm{Z} > a_{\sigma_n}))^{1/2}  \leq C_8 \sigma_n^{2\beta + \varepsilon}
 \end{align*}
 for some constant $C_8$.  The last inequality follows from 
\eqref{eq:P0_zgeqa} and tail condition in \eqref{eq:asnf0_exp_tails}.
Also note that 
 \begin{eqnarray*}
 \log \frac{f_0(y|x)}{p(y|x, \theta, m)} 1\left\{\frac{p(y|x, \theta, m)}{f_0(y|x)} < \lambda_n \right\} \leq \left\{ \log 
\frac{f_0(y|x)}{p(y|x, \theta, m)}\right\}^2 1\left\{\frac{p(y|x, \theta, m)}{f_0(y|x)} < \lambda_n \right\}
 \end{eqnarray*}
 and, thus, 
 \begin{eqnarray*}
 \int \log \frac{f_0(y|x)}{p(y|x, \theta, m)} 1 \left \{ \frac{p(y|x, \theta, m)}{f_0(y|x)} < \lambda_n \right \} f_0 g_0 dz  \leq C_8\sigma_n^{2\beta + \varepsilon}.
 \end{eqnarray*}
By Lemma \ref{lm:dH_KL},  both
$E_0 (\log (f_0(Y|X)/p(Y|X, \theta, m)))$ and $E_0 ([\log (f_0(Y|X)/p(Y|X, \theta, m))]^2)$ are bounded by 
$C_9 \log (1/\lambda_n)^2\sigma_n^{2\beta} \leq A\tilde{\epsilon}_n^2$ for some constant $A$.  

%note that for some $C_8>0$, $\log (1/ \lambda_n) \leq C_8 \log ( 1/ \tilde{\epsilon}_n)$

Finally, we calculate a lower bound on the prior probability of $m = K$ and $\{\theta \in S_{\theta^\star}\}$.
By \eqref{eq:asnPrior_m}, for some $C_{10}>0$,
\begin{equation}
\Pi(m=K) \propto \exp[-a_{10} K (\log K)^{\tau_1}] \geq \exp [-C_{10}\tilde{\epsilon}_n^{-d/\beta} \{\log (1/ \tilde{\epsilon}_n)\}^{d + d/\beta+ d/\tau + \tau_1 }] \label{eq:KL1}. 
\end{equation}
From Lemma 10 of \cite{GhosalVandervaart:07}, for some constants $C_{11}, C_{12}>0$ and all sufficiently large $n$,
\begin{align}
&\Pi\left( \sum_{j=1}^K \abs{\alpha_j - \alpha_j^\star} \geq 2 \tilde{\epsilon}_n ^{2db_1}, \min_{j=1, \ldots, K} \alpha_j \geq \tilde{\epsilon}_n^{4db_1}/2 \bigg| m=K\right) \geq \exp[ -C_{11} K \log (1/\tilde{\epsilon}_n)]  
\notag \\
&\geq 
\exp [-C_{12} \tilde{\epsilon}_n^{-d/\beta} \{\log (1/ \tilde{\epsilon}_n)\}^{d/\beta + d/\tau +d +1}].
\end{align}
For $\pi_{\mu}$ denoting the prior density of $\mu_j^y$ and some $C_{13}, C_{14}>0$, \eqref{eq:asnPrior_mu_lb} implies
\begin{align}
&\Pi(\mu_j \in U_j, j=1, \ldots, N ) \geq \{C_{13}\pi_{\mu}(a_{\sigma}) \mbox{diam}(U_1)^d \}^N
\notag \\ 
&\geq 
\exp \left [ -C_{14} \tilde{\epsilon}_n^{-d/\beta}
\left \{\log (1/ \tilde{\epsilon}_n)\right\}^{d + d/\beta + d/\tau +\max\{1, \tau_2/\tau\}}\right]
\label{eq:Pr_muU}
\end{align}
Assumption \eqref{eq:asnPrior_sigma3} on the prior for $\sigma$, implies
%For the assumed inverse gamma prior for $\sigma$, the mean value theorem implies
\begin{equation}
\Pi(\sigma^{-2} \in \{ \sigma_n^{-2}, \sigma_n^{-2}(1+ \sigma_n^{2\beta}) \}) \geq a_{8} \sigma_n^{-2 a_7} 
 \sigma_n^{2\beta a_8} \exp\{ - a_9\sigma_n^{-1} \} \geq
\exp\{ - C_{15}\sigma_n^{-1} \}\label{eq:KL4}. 
\end{equation} 
It follows from \eqref{eq:KL1} - \eqref{eq:KL4},  
that for all sufficiently large $n$,  $s = 1 + 1/\beta + 1/\tau$, and some $C_{16}>0$ 
\begin{eqnarray*}
\Pi( \mathcal{K}(f_0, A \tilde{\epsilon}_n))  \geq \Pi(m=N, \theta_p \in S_{\theta_p} )  \geq 
\exp [-C_{16}\tilde{\epsilon}_n^{-d/\beta} \{\log (1/ \tilde{\epsilon}_n)\}^{ds + \max\{\tau_1,1,\tau_2/\tau\} }] .
\end{eqnarray*}
The last expression of the above display is bounded below by $\exp\{-C n \tilde{\epsilon}_n^2 \}$ for any $C>0$,
$\tilde{\epsilon}_n =  n^{-\beta/(2\beta + d)} (\log n)^t$, any $t > (ds + \max\{\tau_1,1,\tau_2/\tau\}) / (2 + d/\beta)$, and all sufficiently large $n$.  Since the inequality in the definition of $t$ is strict, the claim of the theorem follows immediately.

\end{proof}

\section{Sieve construction and entropy bounds} \label{sec:sieve}
For $H \in  \mathbb{N}$, $0<\underline{\sigma}<\overline{\sigma}$, and $\overline{\mu},\underline{\alpha}>0$,
let us define a sieve
\begin{eqnarray}\label{eq:sieve}
\mathcal{F} = \{p(y|x,\theta,m): \; m\leq H, \; \alpha_j \geq \underline{\alpha}, \; 
\sigma \in [\underline{\sigma}, \overline{\sigma}], \mu_j^y \in [-\overline{\mu}, \overline{\mu}]^{d_y}, j=1, \ldots,m\}.
\end{eqnarray}
In the following theorem, we bound the covering number of $\mathcal{F}$
in norm 
\[d_{SS}(f_1, f_2) = \sup_{x \in \mathcal{X}} \norm{f_1(y|x) - f_2(y|x)}_1.\]

\begin{theorem} 
\label{th:sieve}
%Analog of Proposition 2 from \cite{ShenTokdarGhosal2013}. 
For $0<\epsilon < 1$ and $\underline{\sigma}\leq 1$, 
\begin{align*}
J(\epsilon, \mathcal{F}, d_{SS}) \leq & 
H \cdot \left \lceil \frac{16\overline{\mu}d_y}{\underline{\sigma}\epsilon} \right \rceil^{Hd_y}
\cdot \left \lceil \frac{48 d_x}{\underline{\sigma}^2 \epsilon} \right \rceil^{Hd_x}
\cdot H \left \lceil \frac{ \log (\underline{\alpha}^{-1}) }{\log (1 + \epsilon/[12H])} \right \rceil^{H-1}
\\ 
& \cdot \left \lceil \frac{ \log (\overline{\sigma}/\underline{\sigma})} {\log (1 + \underline{\sigma}^2\epsilon/[48\max\{d_x,d_y\}])} \right \rceil.
\end{align*} 
For $\underline{\alpha} \leq 1/2$, all sufficiently large $H$, 
large $\overline{\sigma}$ and small $\underline{\sigma}$, 
\begin{align*}
\Pi(\mathcal{F}^c)  \leq 
&
H^2 \exp\{-a_{13} \overline{\mu} ^{\tau_3}\}  + H^2 \underline{\alpha}^{a/H} +   \exp\{- a_{10} H(\log H)^{\tau_1}  \} 
%+ \exp\{-b_4 \log \overline{\sigma}\} + \exp\{-b_5 \underline{\sigma}^{-2a_3}\}
\\
&
+ a_1\exp \{-a_2 \underline{\sigma}^{-2 a_3}\} 
+ 
a_4 \exp\{-2a_5 \log \overline{\sigma}\}
.  
\end{align*}  
\end{theorem}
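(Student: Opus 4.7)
My plan is to handle the two claims separately. For the covering number, the factor $H$ in front accounts for the choice of $m \leq H$. Having fixed $m$, I would construct discrete nets on the four parameter blocks $(\mu_j^y)_{j=1}^m$, $(\mu_j^x)_{j=1}^m$, $(\alpha_j)_{j=1}^m$, and $\sigma$, and via a four-step triangle inequality show that if every parameter of $\theta'$ is within its mesh of the corresponding parameter of $\theta$ then $d_{SS}(p(\cdot|\cdot,\theta,m), p(\cdot|\cdot,\theta',m)) \leq \epsilon$. The mesh sizes will be chosen so that each of the four steps contributes at most $\epsilon/4$.

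For the four perturbation bounds I proceed as follows. Perturbing only the $\mu_j^y$ changes only the Gaussian centres, and since the logit weights sum to one uniformly in $x$ and $\norm{\phi_{\mu,\sigma}-\phi_{\mu',\sigma}}_1 \leq \norm{\mu-\mu'}/\sigma$, a grid of spacing $\underline{\sigma}\epsilon/(8d_y)$ on each coordinate of $[-\overline{\mu},\overline{\mu}]^{d_y}$ suffices, giving the first factor. Perturbing only the $\mu_j^x$ changes only the multinomial-logit weights; using $|e^a-e^b|\leq e^{\max(a,b)}|a-b|$ together with the cancellation between numerator and denominator, the uniform-in-$x$ change in weights is of order $\max_j\norm{\Delta\mu_j^x}/\sigma^2$, so a spacing of $\underline{\sigma}^2\epsilon/(24d_x)$ on each coordinate of $\mu_j^x \in [0,1]^{d_x}$ gives the second factor. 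Perturbing only the $\alpha_j$ is controlled by a relative error $\alpha_j'/\alpha_j \in (1\pm \epsilon/[12H])$, which is preserved under renormalization, so a geometric net on $[\underline{\alpha},1]$ with ratio $1+\epsilon/[12H]$ (over $H-1$ free coordinates of the simplex) yields the third factor. Finally, for $\sigma$ I reuse the bound \eqref{eq:bd4sigmas} between Gaussians differing only in scale and propagate it through the mixture: a geometric net on $[\underline{\sigma},\overline{\sigma}]$ with ratio $1+\underline{\sigma}^2\epsilon/[48\max\{d_x,d_y\}]$ delivers the last factor. Multiplying the four cardinalities and the factor $H$ gives the stated bound.

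For $\Pi(\mathcal{F}^c)$ I apply the union bound over the five defining inequalities. The event $\{m>H\}$ has probability $\sum_{i>H}\exp\{-a_{10}i(\log i)^{\tau_1}\} \lesssim \exp\{-a_{10}H(\log H)^{\tau_1}\}$ by \eqref{eq:asnPrior_m}. Conditional on $m$, each marginal $\alpha_j$ is Beta$(a/m, a(m-1)/m)$, whose CDF at $\underline{\alpha}\leq 1/2$ is of order $\underline{\alpha}^{a/m}$; a union bound over the $m\leq H$ components and a sum over $m$ produce the $H^2\underline{\alpha}^{a/H}$ term. The event that some $\mu_j^y$ exits $[-\overline{\mu},\overline{\mu}]^{d_y}$ follows from \eqref{eq:asnPrior_mu_tail_ub} by a similar union bound, giving $H^2\exp\{-a_{13}\overline{\mu}^{\tau_3}\}$. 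The two $\sigma$-tail events are immediate from \eqref{eq:asnPrior_sigma1} and \eqref{eq:asnPrior_sigma2}.

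The main obstacle I anticipate is the $\mu_j^x$ perturbation: because $d_{SS}$ is a \emph{sup} over $x$, one cannot average out the variation in $x$ and must instead control the worst-case ratio between perturbed and unperturbed normalizing sums, which is why the mesh on $\mu_j^x$ inherits the extra $\underline{\sigma}^{-2}$ factor and why $\underline{\sigma}^{-2}$ also enters the $\sigma$-mesh. A secondary technical point is verifying the Beta$(a/m, a(m-1)/m)$ tail estimate uniformly in $m\leq H$ when $a/m$ is small, for which I would use the standard asymptotic expansion of the incomplete beta function near zero.
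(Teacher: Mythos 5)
Your proposal follows essentially the same route as the paper: the same grids (uniform meshes of width $\propto\underline{\sigma}\epsilon$ for $\mu^y$ and $\propto\underline{\sigma}^2\epsilon$ for $\mu^x$, geometric nets for $\alpha$ and $\sigma$), the same key estimates ($\norm{\phi_{\mu,\sigma}-\phi_{\mu',\sigma'}}_1$ bounds and relative-error control of $K_j=\exp\{-\norm{x-\mu_j^x}^2/(2\sigma^2)\}$ uniformly in $x$, which is exactly where the paper's $\underline{\sigma}^{-2}$ factors come from), and the same union bound with the Beta$(a/m,a(m-1)/m)$ tail for $\Pi(\mathcal{F}^c)$. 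The only cosmetic difference is that the paper perturbs all parameter blocks at once through a single $L_1$ inequality (Proposition 3.1 of Norets and Pelenis) that splits into a $y$-kernel term and a mixing-weight term, rather than your four-step triangle inequality, and it bounds the Beta tail by an elementary direct integral rather than an incomplete-beta expansion.
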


\begin{proof}
We will start with the first assertion.  
Fix a value of $m$. 
Define set $S_{\mu^y}^m$ to
contain centers of $|S_{\mu^y}^m| =  \lceil 16\overline{\mu}d_y/(\underline{\sigma}\epsilon) \rceil$ equal length intervals 
partitioning $[-\overline{\mu}, \overline{\mu}]$. 
Similarly, define set $S_{\mu^x}^m$
to contain centers of $|S_{\mu^x}^m| =  \lceil 48d_x/(\underline{\sigma}^2 \epsilon) \rceil$ equal length intervals 
partitioning $[0, 1]$.

For $N_{\alpha} = \lceil \log (\underline{\alpha}^{-1}) /\log (1 + \epsilon/(12m))  \rceil$, define 
\[Q_\alpha=\{
\gamma_j, \, j=1,\ldots,N_\alpha: \; \gamma_1=\underline{\alpha}, \; (\gamma_{j+1} - \gamma_j) / \gamma_j = \epsilon/(12m), \,
j=1,\ldots,N_\alpha-1
\}\]
and note that for any $\gamma \in [\underline{\alpha},1]$ there exists $j\leq N_\alpha$ such that 
$0 \leq (\gamma-\gamma_j)/\gamma_j \leq \epsilon/(12m)$.
Let 
$S_\alpha^m=\{
(\tilde{\alpha}_1,\ldots,\tilde{\alpha}_m) \in \Delta^{m-1}: \; \tilde{\alpha}_{j_k} \in Q_\alpha, \, 
1 \leq j_1<j_2<\ldots< j_{m-1}\leq m\}$. Note that $|S_\alpha^m| \leq m (N_\alpha)^{m-1}$.
Let us consider an arbitrary $\alpha \in \Delta^{m-1}$.
Since $S_\alpha^m$ is permutation invariant, we can assume without loss of generality that $\alpha_m \geq 1/m$.
By definition of $S_\alpha^m$, there exists $\tilde{\alpha} \in S_\alpha^m$ such that
$0 \leq (\alpha_j-\tilde{\alpha_j})/\tilde{\alpha_j} \leq \epsilon/(12m)$ for $j=1,\ldots,m-1$.
Also, 
\[
\frac{|\alpha_m-\tilde{\alpha}_m|}{\min(\alpha_m, \tilde{\alpha}_m)}
=\frac{|\alpha_m-\tilde{\alpha}_m|}{\alpha_m}
= 
\frac{\sum_{j=1}^{m-1}\tilde{\alpha}_j(\alpha_j-\tilde{\alpha}_j)/\tilde{\alpha}_j}{\alpha_m}\leq 
\frac{\epsilon}{12}.
\]

%We consider two sets of grid points for $\sigma$ and take their union.  Let $S_{1\sigma}$ be an $\epsilon \underline{\sigma}/24d_x$-net of 
%$[1/\overline{\sigma}, 1/\underline{\sigma}]$ and $S_{2\sigma} =  \{\sigma^{l},l=1, \ldots, N_{\sigma} = \lfloor{ \log (\overline{\sigma}/\underline{\sigma})/ (\log (1 + \epsilon/4d_y))}, \sigma^1= \underline{\sigma}, (\sigma^{l+1} - \sigma^l)/\sigma^l = \epsilon/4d_y\}$.  Define $S_{\sigma} = S_{1\sigma} \cup S_{2\sigma}$. 

Define $S_{\sigma} =  \{\sigma^{l},l=1, \ldots, N_{\sigma} = \lceil{ \log (\overline{\sigma}/\underline{\sigma})/ (\log (1 + \underline{\sigma}^2\epsilon/(48\max\{d_x,d_y\})}\rceil, \sigma^1= \underline{\sigma}, (\sigma^{l+1} - \sigma^l)/\sigma^l = \underline{\sigma}^2\epsilon/(48\max\{d_x,d_y\})\}$.   Then $\abs{S_{\sigma}} = N_{\sigma}.$
%Define $S_{\sigma} = S_{1\sigma} \cup S_{2\sigma}$. 

Below we show that 
\[
S_{\mathcal{F}} =  \{p(y|x,\theta,m): \; m\leq H, 
\, 
\alpha \in S_{\alpha}^m, \,
\sigma \in S_{\sigma}, \,
\mu_{jl}^x \in S_{\mu^x}^m,\, 
\mu_{jk}^y \in S_{\mu^y}^m, \, j\leq m, \, l \leq d_x, \, k\leq d_y
 \}\] provides an $\epsilon$-net for $\mathcal{F}$ in $d_{SS}$.  Fix $p(y|x,\theta,m)  \in \mathcal{F}$ for some $m \leq H, \alpha \in \Delta^{m-1}$ with $\alpha_j \geq \underline{\alpha}$, $\mu^x \in [0, 1]^{d_x}$, $\mu^y \in [-\overline{\mu}, \overline{\mu}]^{d_y}$ and $\sigma \in [\underline{\sigma}, \overline{\sigma}]$ with $\sigma^l \leq \sigma \leq \sigma^{l+1}$.  
Find   
$\tilde{\alpha} \in S_{\alpha}^m$,
$\tilde{\mu}_{jl}^x \in S_{\mu^x}^m$,
$\tilde{\mu}_{jk}^y \in S_{\mu^y}^m$, and $\tilde{\sigma}=\sigma_l \in S_{\sigma}$
such that for all
$j=1, \ldots, m$, $k=1,\ldots, d_y$, and $l=1,\ldots, d_x$
\begin{equation*}
|\mu_{jk}^y  - \tilde{\mu}_{jk}^y | \leq \frac{\underline{\sigma} \epsilon}{16 d_y},  
\;
|\mu_{jl}^x- \tilde{\mu}_{jl}^x | \leq \frac{\underline{\sigma}^2 \epsilon}{96 d_x},  
\; 
\frac{\alpha_j - \tilde{\alpha}_j}{\alpha_j}  \leq \frac{\epsilon}{12},
\; 
\frac{|\sigma - \tilde{\sigma}|}{\sigma} \leq \frac{\underline{\sigma}^2\epsilon}{48\max\{d_x,d_y\}}. 
\end{equation*}

Let $K_j=\exp\{-0.5||x-\mu_j^x||^2/\sigma^2 \}$.
The proof of Proposition 3.1 in \cite{NoretsPelenis:11} implies the following inequality for any $x \in \mathcal{X}$
\footnote{
\cite{NoretsPelenis:11} use this inequality in conjunction with a lower bound on $K_j$, which leads to entropy bounds that are not sufficiently tight for adaptive contraction rates.
}
\begin{align*}
%\label{eq:L1normBd}
& \int |p(y|x, \theta, m) - p(y|x, \tilde{\theta}, m)| dy \leq 
2 \max_{j=1,\ldots,m}
|| \phi_{\mu_j^y,\sigma} -\phi_{\tilde{\mu}_j^y,\tilde{\sigma}}||_1 \\
%\label{eq:DiffPiBd}
& +
2 \left( \frac{\sum_{j=1}^m \alpha_j | K_j - \tilde{K}_j|}{\sum_{j=1}^m \alpha_j K_j}  +   \frac{\sum_{j=1}^m \tilde{K}_j | \alpha_j - \tilde{\alpha}_j|}{\sum_{j=1}^m \alpha_j K_j} \right).
\end{align*}
It is easy to see that
\[
|| \phi_{\mu_j^y,\sigma} -\phi_{\tilde{\mu}_j^y,\tilde{\sigma}}||_1 \leq 
2 \sum_{k=1}^{d_y} \bigg\{\frac{|\mu_{jk}^y - \tilde{\mu}_{jk}^y|}{\sigma \wedge \tilde{\sigma}} 
+ \frac{|\sigma - \tilde{\sigma}|}{\sigma \wedge \tilde{\sigma}} \bigg\}
\leq \frac{\epsilon}{4}.
\] 
Also,
\begin{align*}
& \frac{\sum_{j=1}^m \alpha_j | K_j - \tilde{K}_j|}{\sum_{j=1}^m \alpha_j K_j}  +   \frac{\sum_{j=1}^m \tilde{K}_j | \alpha_j - \tilde{\alpha}_j|}{\sum_{j=1}^m \alpha_j K_j} \\
&  \leq \max_{j} \frac{ | K_j - \tilde{K}_j |}{K_j}  + \max_{j} \frac{ | \alpha_j - \tilde{\alpha}_j|}{\alpha_j}   + \max_{j}  \frac{ | K_j - \tilde{K}_j |  | \alpha_j - \tilde{\alpha}_j|}{\alpha_jK_j}.
\end{align*}
Since $|\alpha_j - \tilde{\alpha}_j|/\alpha_j \leq \epsilon/12$ and $\epsilon<1$, 
the above display is bounded by $\epsilon/4$ if we can show $| K_j - \tilde{K}_j |/K_j \leq \epsilon /12$.
Observe that
\begin{align}
& \abs{\frac{||x- \mu_j^x||^2}{2 \sigma^2} - \frac{||x- \tilde{\mu}_j^x||^2}{2 \tilde{\sigma}^2}} 
\leq \frac{1}{2} \abs{\frac{1}{\sigma^2} -  \frac{1}{\tilde{\sigma}^2} }\norm{x - \mu_j^x}^2 
+  \frac{1}{2\tilde{\sigma}^2} \abs{\norm{x - \mu_j^x}^2 -  \norm{x - \tilde{\mu}_j^x}^2  } 
\nonumber
\\
& \leq \frac{||x- \mu_j^x||^2\abs{(\sigma -  \tilde{\sigma})/\sigma}}{\underline{\sigma}^2}  
+ \frac{||\mu_j^x - \tilde{\mu}_j^x||( 2||x|| +||\mu_j^x|| + ||\tilde{\mu}_j^x|| )}{2\underline{\sigma}^2}  
\leq \frac{\epsilon}{48} + \frac{\epsilon}{48} = \frac{\epsilon}{24},  
\label{eq:mu_tilde_mu}
\end{align}
where the penultimate inequality follows from $||x- \mu_j^x||^2\leq d_x$, 
$2||x|| +||\mu_j^x|| + ||\tilde{\mu}_j^x|| \leq 4 d_x^{1/2}$, and 
$||\mu_j^x - \tilde{\mu}_j^x|| \leq d_x^{1/2} \max_l |\mu_{jl}^x - \tilde{\mu}_{jl}^x|$.
Now since $|1- e^x| < 2|x|$ for  $|x| < 1$, 
\begin{eqnarray}
\label{eq:K_j_tildeK_j}
 \frac{ \abs{K_j - \tilde{K}_j }}{K_j} & = & \abs{1 -  \exp \bigg\{\frac{||x- \mu_j^x||^2}{2 \sigma^2} - \frac{||x- \tilde{\mu}_j^x||^2}{2 \tilde{\sigma}^2}  \bigg\}} \\
 & \leq & 2\abs{\frac{||x- \mu_j^x||^2}{2 \sigma^2} - \frac{||x- \tilde{\mu}_j^x||^2}{2 \tilde{\sigma}^2}} \leq \frac{\epsilon}{12}.
\nonumber
\end{eqnarray}
This concludes the proof for the covering number.

Next, let us obtain an upper bound for $\Pi(\mathcal{F}^c)$.  From the assumptions in Section \ref{sec:prior}
\[
\Pi(\exists \, j \in \{1, \ldots, m\}, \,\text{s.t.} \,\mu_j^y \in [-\overline{\mu}, \overline{\mu}]^{d_y}) \leq m \exp(-a_{13} \overline{\mu}^{\tau_3}).   
\]
For all sufficiently large $H$,
\[
\Pi(m > H) = C_1
\sum_{i=H+1}^\infty e^{- a_{10} i (\log i)^{\tau_1}} %\leq C_1 \int_{H}^\infty  e^{- a_{10} x (\log x)^{\tau_1}} dx 
\leq C_1 \int_{H}^\infty  e^{- a_{10} r (\log H)^{\tau_1}} dr \leq e^{- a_{10} H (\log H)^{\tau_1}}.
\]
Observe that $\alpha_j | m \sim  \mbox{Beta}(a/m, a(m-1)/m )$.  
Considering separately $a(m-1)/m -1<0$ and $a(m-1)/m -1 \geq 0$, it is easy to see that
$(1-q)^{a(m-1)/m -1} \leq 2$ for any $q \in [0,\underline{\alpha}]$ and $\underline{\alpha}\leq 1/2$.
Thus, 
 \begin{align}
 & \Pi(\alpha_j < \underline{\alpha}|m) = \frac{\Gamma(a)}{\Gamma(a/m)\Gamma(a(m-1)/m)} \int_{0}^{\underline{\alpha}} q^{a/m -1}  (1-q)^{a(m-1)/m -1}  dq  \nonumber \\
&\leq \frac{ \Gamma(a)}{\Gamma(a/m)\Gamma(a(m-1)/m)}  2 \int_{0}^{\underline{\alpha}} q^{a/m -1}    dq  \nonumber \\
& =  \frac{\Gamma(a)2 \underline{\alpha}^{a/m}}{\Gamma(a/m+1) \Gamma(a(m-1)/m)}  
\leq  e^2  2 \Gamma(a+1) \underline{\alpha}^{a/m}=C(a)\underline{\alpha}^{a/m}, \label{eq:alpha1}
 \end{align} 
where the final inequality is implied by the following facts: $\Gamma(a/m +1) \geq \int_{1}^{\infty} q^{a/m} e^{-q} dq \geq e^{-1}$ and $\Gamma(a(m-1)/m) \geq \int_{0}^1 q^{a(m-1)/m -1} e^{-q} dq \geq me^{-1}/a(m-1)$.  

Consider $\Pi (\sigma \notin [\underline{\sigma}, \overline{\sigma}])=\Pi(\sigma^{-1} \geq \underline{\sigma}^{-1}) 
+ \Pi(\sigma^{-1} \leq \overline{\sigma}^{-1})$.  
Since the prior for $\sigma$ satisfies  \eqref{eq:asnPrior_sigma1} and \eqref{eq:asnPrior_sigma2}, for 
 sufficiently large $\overline{\sigma}$ and small $\underline{\sigma}$
\begin{align}
\Pi(\sigma^{-1} \geq \underline{\sigma}^{-1}) \leq a_1\exp \{-a_2 \underline{\sigma}^{-2 a_3}\}, \quad 
\Pi(\sigma^{-1} \leq \overline{\sigma}^{-1})  \leq 
a_4 \overline{\sigma}^{-2a_5} = a_4 \exp\{-2a_5 \log \overline{\sigma}\}.
\end{align}

Now observe that 
\begin{align*}
\Pi(\mathcal{F}^c) \leq 
&  \; \Pi \left(\exists   m \leq H, \, \exists  j \leq m, \, \text{s.t.} \, 
             \mu_j^y \notin \{[-\overline{\mu}, \overline{\mu}]^{d_y}\}^c \right)   
	 + \Pi( m > H)  
\\ 
& +\Pi (\sigma \notin [\underline{\sigma}, \overline{\sigma}]) 
  + \Pi\left(\exists  m \leq H, \exists  j \leq m, \, \text{s.t.} \, \alpha_{j} <\underline{\alpha} \big | m\right)  
\\
& \leq   \sum_{m=1}^ H m\Pi( \mu_j^y \notin \{[-\overline{\mu}, \overline{\mu}]^{d_y}\}^c)  
  + \sum_{m=1}^ H m  \Pi( \alpha_{j} < \underline{\alpha} |m) 
	+ \Pi( m > H) + \Pi (\sigma \notin [\underline{\sigma}, \overline{\sigma}])  
	\\
& \leq   \frac{H(H+1)}{2} \exp\{-a_{13} \overline{\mu} ^{\tau_3}\}   
 +   \frac{H(H+1)}{2} C(a) \underline{\alpha}^{a/H}
 + \exp\{-0.5 a_{10} H(\log H)^{\tau_1}  \} 
 + \Pi (\sigma \notin [\underline{\sigma}, \overline{\sigma}]) 
\\
& \leq  H^2 \exp\{-a_{13} \overline{\mu} ^{\tau_3}\}  + H^2 \underline{\alpha}^{a/H} 
  +   \exp\{-a_{10} H(\log H)^{\tau_1}  \} 
	+ \Pi (\sigma \notin [\underline{\sigma}, \overline{\sigma}]).  
\end{align*}
%This concludes the proof of Proposition \ref{lm:sieve}.  
\end{proof}

\begin{theorem} 
\label{tm:sieve_n}
 For $n\geq 1$,
let $\epsilon_n = n^{- \beta/ (2\beta + d)}(\log n)^t$, $\tilde{\epsilon}_n = n^{- \beta/ (2\beta + d)}(\log n)^{t_0}$ for $ t_0 > (ds + \max\{\tau_1,1,\tau_2/\tau\}) / (2 + d/\beta)$  and define $\mathcal{F}_n$ as in \eqref{eq:sieve} with $\epsilon = \epsilon_n, H =
n\epsilon_n^2 /(\log  n)$, 
$\underline{\alpha}= e^{-n H}$,
$\underline{\sigma} = 	n^{-1/(2a_3)}$, $\overline{\sigma} = 	e^n$,
and $\overline{\mu}=n^{1/\tau_3}$. 
 Then for all  $t > t_0 +  \max \{0, (1- \tau_1)/2\}$, and some constants 
$c_1,c_3 >0$ and every $c_2 >0$, 
$\mathcal{F}_n$ satisfies \eqref{eq:entropy} and \eqref{eq:sievecomplement} for all large $n$.  \end{theorem}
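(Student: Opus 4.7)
The plan is to apply Theorem \ref{th:sieve} directly with the parameter values $(H,\underline{\alpha},\underline{\sigma},\overline{\sigma},\overline{\mu})$ specified in the statement, and then verify the two required bounds by substitution and order-of-magnitude comparisons. Since $g_0$ is a probability density, $d_h^2 \leq d_1 \leq d_{SS}$, so an $\epsilon_n$-net in $d_{SS}$ furnishes an $\epsilon_n$-net in $d_1$, and an $\epsilon_n^2$-net in $d_{SS}$ furnishes an $\epsilon_n$-net in $d_h$. In both cases the radius enters Theorem \ref{th:sieve} only inside logarithms, so replacing $\epsilon_n$ by $\epsilon_n^2$ affects the bounds only by constants times $\log n$.

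For the entropy bound \eqref{eq:entropy}, the substitutions yield $\overline{\mu}/\underline{\sigma} = n^{1/\tau_3+1/(2a_3)}$, $\log(\underline{\alpha}^{-1}) = nH$, and $\log(\overline{\sigma}/\underline{\sigma}) = n + O(\log n)$, while $[\log(1+\epsilon_n/(12H))]^{-1} \asymp H/\epsilon_n$ and the analogous quantity for $\sigma$ is $\asymp n^{1/a_3}/\epsilon_n$. Each logarithm appearing in Theorem \ref{th:sieve} is therefore $O(\log n)$. Summing the contributions gives $\log J(\epsilon_n,\mathcal{F}_n,d_{SS}) = O(Hd\log n) = O(n\epsilon_n^2)$, since $H\log n = n\epsilon_n^2$ by construction, so \eqref{eq:entropy} holds for a suitable $c_1$.

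The main work is the sieve-complement bound. Four of the five terms in the expression for $\Pi(\mathcal{F}^c)$ from Theorem \ref{th:sieve} evaluate to $H^2 e^{-a_{13}n}$, $H^2 e^{-an}$, $a_1 e^{-a_2 n}$, and $a_4 e^{-2a_5 n}$; since $n\tilde\epsilon_n^2 = n^{d/(2\beta+d)}(\log n)^{2t_0} = o(n)$, each of these is $o\bigl(\exp\{-(c_2+4)n\tilde\epsilon_n^2\}\bigr)$ for any $c_2>0$. The binding term is $\exp\{-a_{10}H(\log H)^{\tau_1}\}$. With $H \asymp n^{d/(2\beta+d)}(\log n)^{2t-1}$ and $\log H \asymp \log n$, we have $H(\log H)^{\tau_1} \asymp n^{d/(2\beta+d)}(\log n)^{2t-1+\tau_1}$, which must dominate a constant multiple of $n\tilde\epsilon_n^2 \asymp n^{d/(2\beta+d)}(\log n)^{2t_0}$. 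The polynomial-in-$n$ factors match, so the requirement reduces to $2t-1+\tau_1 > 2t_0$, i.e., $t > t_0 + (1-\tau_1)/2$. Combined with $t > t_0$ (needed to ensure $\tilde\epsilon_n \leq \epsilon_n$), this is precisely the condition $t > t_0 + \max\{0,(1-\tau_1)/2\}$ in the statement. Identifying this single binding term, which captures the trade-off between the prior on $m$ and the log-volume of the sieve, is the only nontrivial point; everything else is bookkeeping on polynomial and logarithmic factors.
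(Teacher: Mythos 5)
Your proposal is correct and follows essentially the same route as the paper: apply Theorem \ref{th:sieve} with the stated parameter choices, observe that every factor in the entropy bound contributes $O(\log n)$ inside a power of $H$ so that $\log J \lesssim H\log n = n\epsilon_n^2$, and identify $\exp\{-a_{10}H(\log H)^{\tau_1}\}$ as the binding term in $\Pi(\mathcal{F}_n^c)$, which yields exactly the condition $t > t_0 + \max\{0,(1-\tau_1)/2\}$. Your handling of the metric comparison ($d_h^2 \le d_1 \le d_{SS}$, with the squared radius only perturbing logarithms) is in fact slightly more careful than the paper's one-line remark.
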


\begin{proof}
Since $d_1 \leq d_{SS}$ and $d_h\leq d_1^2$, Theorem \ref{th:sieve} implies
\[
 \log J(\epsilon_n,\mathcal{F}_n,  \rho) 
\leq  c_1 H \log n =  c_1 n\epsilon_n^2.  
\]
Also,
\begin{align*}
 \Pi(\mathcal{F}_n^c) 
& \leq  H^2 \exp\{-a_{13} n\}  
   + H^2 \exp\{-a n\} 
   + \exp\{- a_{10} H(\log H)^{\tau_1} \} 
	%+ \exp\{-b_4 n\} + \exp\{-b_5 n\}
\\
&
+ a_1\exp \{-a_2 n\} 
+ 
a_4 \exp\{-2a_5 n\}
.  
\end{align*}
Hence, $\Pi(\mathcal{F}_n^c) \leq e^{-(c_2+4) n \tilde{\epsilon}_n^2}$ for any $c_2$ if 
$\epsilon_n^2 (\log n)^{\tau_1 -1}/\tilde{\epsilon}_n^2 \rightarrow \infty$, which holds for 
$t > t_0 +  \max \{0, (1- \tau_1)/2\}$.

\end{proof}

\section{Unbounded covariate space}
\label{sec:non_comp}
The assumption of bounded covariate space $\mathcal{X}$ in Section \ref{sec:main_results} could be restrictive  in some applications.  In this section, we consider a generalization of our result to the case when the covariate space is possibly unbounded.  
We re-formulate the assumptions on the data generating process and the prior distributions below. 

\subsection{Assumptions about data generating process}
\label{sec:asns_nonc}

Let $\mathcal{X}  \subset \mathbb{R}^{d_x}$.  
First, let us assume that there exist a constant $\eta > 0$ and
a probability density function $\bar{g}_0(x)$ with respect to the Lebesgue measure such that $\eta \bar{g}_0(x) \geq g_0(x)$ for all $x \in \mathcal{X}$ and $\tilde{f}_0(y, x) = f_0(y|x) \bar{g}_0(x) \in  \mathcal{C}^{\beta,L,\tau_0}$.  
Second, we assume $g_0$ satisfies
\begin{eqnarray}\label{ass_g0_nonc}
\int e^{\kappa \norm{x}^2} g_0(x) dx  \leq B < \infty
\end{eqnarray}
for some constant $\kappa > 0$.  
Third, $\tilde{f}_0(y, x)$ is assumed to satisfy 
\begin{equation}
	\label{eq:asnE0Dff0_Lf0tilde}
	\int_{\mathcal{Z}} \left|\frac{D^k \tilde{f}_0(y,x)}{\tilde{f}_0(y, x)}\right|^{(2\beta+\varepsilon)/k} \tilde{f}_0(y, x) dydx< \infty,
	\;
	\int_{\mathcal{Z}} \left|\frac{L(y,x)}{\tilde{f}_0(y,x)}\right|^{(2\beta+\varepsilon)/\beta} \tilde{f}_0 (y, x) dydx< \infty
	\end{equation} 
	for all $k \leq \lfloor \beta \rfloor$ and some $\varepsilon>0$.
Finally, for all sufficiently large $(y,x) \in \mathcal{Y} \times \mathcal{X}$ and some positive $(c,b,\tau)$, 
\begin{equation}
\label{eq:asntildef0_exp_tails2}
\tilde{f}_0(y, x) \leq c \exp(-b ||(y,x)||^\tau). 
	\end{equation} 
Let us elaborate on how the above assumptions allow for $f_0$ of smoothness level 
$\beta$.  First of all, the original assumptions on the data generating process for the bounded $\mathcal{X}$ are a special case of the assumptions here with $\bar{g}_0$ being a uniform density on $\mathcal{X}$.
Second, when covariate density $g_0$ has a higher smoothness level than $\beta$ and 
$\bar{g}_0=g_0$, the assumption $\tilde{f}_0 \in  \mathcal{C}^{\beta,L,\tau_0}$ essentially restricts the smoothness of $f_0$ only.
Finally, when $g_0$ has a lower smoothness level than $\beta$, then our assumptions require existence of a sufficiently smooth and well behaved upper bound on $g_0$ in addition to $f_0$ having smoothness level $\beta$.

\subsection{Prior} \label{sec:prior_nonc} 
The assumption on the prior for $\mu_j^x$ is the only part of the prior from Section \ref{sec:prior} that we need to modify here.
Similarly to the prior on $\mu_j^y$, we assume that the prior density for $\mu_j^x$ is bounded  below for some $a_{14}, a_{15}, \tau_4 > 0$ by
\[
a_{14} \exp(-a_{15} ||\mu_j^x||^{\tau_4} ),
\]
and for some $a_{16}, \tau_5>0$ and all sufficiently large $r > 0$,
\begin{equation*}
	\label{eq:asnPrior_mux_tail_ub}
		1- \Pi(\mu_{j}^x \in [-r,r]^{d_x}) \geq \exp(-a_{16} r^{\tau_5}). 
\end{equation*}
%Note that for simplicity of exposition, the constants $\tau_2, \tau_3$ are taken to be the same as that for the prior for $\mu_j^y$. 

\begin{corollary}
\label{cr:nonc}
Under the assumptions in Sections \ref{sec:asns_nonc}-\ref{sec:prior_nonc}, 
the posterior contracts at the rate specified in Theorem \ref{th:rate_cond_dens}. 
\end{corollary}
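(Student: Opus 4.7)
The plan is to adapt each step of the proofs of Theorem \ref{th:prior_thickness} and Theorems \ref{th:sieve}--\ref{tm:sieve_n} by systematically replacing the uniform density $u$ on $[0,1]^{d_x}$ with the dominating density $\bar{g}_0$, exploiting the fact that by hypothesis $\tilde{f}_0(y,x)=f_0(y|x)\bar{g}_0(x)$ is a genuine joint probability density that lies in $\mathcal{C}^{\beta,L,\tau_0}$ and satisfies the moment and tail conditions \eqref{eq:asnE0Dff0_Lf0tilde}--\eqref{eq:asntildef0_exp_tails2}. Since the pointwise domination $g_0(x)\leq \eta\bar{g}_0(x)$ immediately gives
\[
d_h^2(f_0,p(\cdot|\cdot,\theta,m)) \;\leq\; \eta\int\bigl(\sqrt{f_0(y|x)}-\sqrt{p(y|x,\theta,m)}\bigr)^2\bar{g}_0(x)\,dy\,dx,
\]
an application of the triangle-inequality argument underlying Lemma \ref{lm:dH_cond_bdd_dHjoint} (with $\bar{g}_0$ in place of $u$, using data processing to bound the marginal-Hellinger remainder $d_H(p^x,\bar{g}_0)\leq d_H(\tilde{f}_0,p)$) yields $d_h^2(f_0,p)\leq C_1'\,d_H^2(\tilde{f}_0,p)$.

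For the prior thickness, I would then invoke Lemma \ref{lm:positive_mix_dens} applied to $\tilde{f}_0$ to produce $\theta^\star$, a partition $\{U_j\}$ of $\{z:\|z\|\leq a_{\sigma_n}\}$, and the bound $d_H(\tilde{f}_0,p(\cdot|\theta^\star,m))\leq C_3\sigma_n^\beta$, exactly as in the bounded case. The KL computation following \eqref{eq:f0upsbeta} carries over verbatim once we verify the two auxiliary bounds
\[
E_0(\|Z\|^8)\leq \eta\int\|z\|^8\tilde{f}_0(y,x)\,dy\,dx<\infty,\qquad P_0(\|Z\|>a_{\sigma_n})\leq \eta\,\tilde{P}_0(\|Z\|>a_{\sigma_n})\leq B_0\sigma_n^{4\beta+2\varepsilon+8},
\]
both immediate from \eqref{ass_g0_nonc}, \eqref{eq:asntildef0_exp_tails2} and the domination. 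The lower bound \eqref{eq:Pr_muU} on $\Pi(\mu_j\in U_j,\,j\leq N)$ must be rederived using the new lower bound $a_{14}\exp(-a_{15}\|\mu_j^x\|^{\tau_4})$ on the prior density of $\mu_j^x$; since each $U_j$ sits inside a ball of radius $a_{\sigma_n}\sim(\log(1/\sigma_n))^{1/\tau}$, this merely introduces an additional $\max\{1,\tau_4/\tau\}$ term inside the $\max\{\tau_1,1,\tau_2/\tau\}$ factor, leaving the polynomial rate $n^{-\beta/(2\beta+d)}$ intact.

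For the sieve, I would replace \eqref{eq:sieve} by
\[
\mathcal{F}_n=\{p(y|x,\theta,m):m\leq H,\alpha_j\geq\underline{\alpha},\sigma\in[\underline{\sigma},\overline{\sigma}],\mu_j^y\in[-\overline{\mu},\overline{\mu}]^{d_y},\mu_j^x\in[-\overline{\mu}_x,\overline{\mu}_x]^{d_x}\},
\]
treating $\overline{\mu}_x$ exactly as $\overline{\mu}$ is treated in Theorem \ref{tm:sieve_n} with the choice $\overline{\mu}_x=n^{1/\tau_5}$. The entropy calculation of Theorem \ref{th:sieve} gains a factor of order $\lceil 16\overline{\mu}_x d_x/(\underline{\sigma}^2\epsilon)\rceil^{Hd_x}$ parallel to the $\mu^y$ factor (the $\underline{\sigma}^{-2}$ coming from \eqref{eq:mu_tilde_mu}), and the prior-complement bound gains the term $H^2\exp(-a_{16}\overline{\mu}_x^{\tau_5})$. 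Because $\mathcal{X}$ is now unbounded, the $d_{SS}$-entropy must be reinterpreted: it suffices to replace $d_{SS}$ with $d_{SS,M_n}(f_1,f_2)=\sup_{\|x\|\leq M_n}\|f_1(\cdot|x)-f_2(\cdot|x)\|_1$ for $M_n$ growing polylogarithmically in $n$, since $d_1(f_1,f_2)\leq d_{SS,M_n}(f_1,f_2)+2P_0(\|X\|>M_n)$ and the second term is negligible by \eqref{ass_g0_nonc}. All the remaining bookkeeping in Theorem \ref{tm:sieve_n} carries through with possibly enlarged log exponent.

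The main obstacle is the first step, namely a clean proof of $d_h^2(f_0,p)\leq C\,d_H^2(\tilde{f}_0,p)$, because $p(y,x|\theta,m)$ has its own $x$-marginal $p^x$ which need not match $\bar{g}_0$. I would address this by writing
\[
\bigl(\sqrt{f_0(y|x)}-\sqrt{p(y|x,\theta,m)}\bigr)^2 \leq 2\bigl(\sqrt{f_0(y|x)}-\sqrt{p(y,x|\theta,m)/\bar{g}_0(x)}\bigr)^2+2\bigl(\sqrt{p(y,x|\theta,m)/\bar{g}_0(x)}-\sqrt{p(y|x,\theta,m)}\bigr)^2,
\]
integrating against $\bar{g}_0(x)$, identifying the first integral with $d_H^2(\tilde{f}_0,p)$ and the second with $d_H^2(\bar{g}_0,p^x)$, and then invoking the data-processing inequality to control the marginal Hellinger by the joint Hellinger $d_H^2(\tilde{f}_0,p)$. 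Once this reduction is in place, the remainder of the argument is essentially a mechanical transcription of Theorems \ref{th:prior_thickness}, \ref{th:sieve}, and \ref{tm:sieve_n} with $u\mapsto\bar{g}_0$ and with the covariate-mean parameters handled analogously to the response-mean parameters.
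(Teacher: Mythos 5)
Your proposal is correct and its overall architecture coincides with the paper's: reduce $d_h(f_0,p)$ to the joint Hellinger distance $d_H(f_0\bar{g}_0,p(\cdot|\theta,m))$, run the prior-thickness argument of Theorem \ref{th:prior_thickness} on the joint density $\tilde{f}_0=f_0\bar{g}_0$, and enlarge the sieve to confine $\mu_j^x$ to $[-\overline{\mu}^x,\overline{\mu}^x]^{d_x}$ with $\overline{\mu}^x=n^{1/\tau_5}$. Your treatment of the reduction step is in substance identical to the paper's Lemma \ref{lm:dH_cond_bdd_dHjoint} and Corollary \ref{cor:dH_cond_bdd_dHjoint}: the paper's inequality $\mathrm{II}\le\mathrm{I}$ is precisely the data-processing inequality for the Hellinger distance that you invoke, so there is no gap there. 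Where you genuinely diverge is the entropy calculation on the unbounded covariate space. The paper computes the $d_1$ covering number directly, splitting $\int |K_j-\tilde{K}_j|/K_j\,g_0(x)\,dx$ over $\mathcal{X}_1=\{|x_l|\le\overline{\mu}^x\}$ and its complement and using $|1-e^r|\le e^{|r|}$ together with the exponential moment condition \eqref{ass_g0_nonc} to kill the tail once $(\overline{\mu}^x)^2\gtrsim\log(1/\epsilon)$. You instead truncate at a polylogarithmic radius $M_n$, cover in the sup-distance $d_{SS,M_n}$, and absorb the tail into $2P_0(\|X\|>M_n)\lesssim e^{-\kappa M_n^2}$, which is indeed $o(\epsilon_n)$ for $M_n\asymp\sqrt{\log n}$; both routes are valid and yield the same rate, yours externalizing the tail into a probability bound while the paper keeps it inside the integral. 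Two bookkeeping points in your sketch: the $\mu^x$-grid cardinality should scale like $(\overline{\mu}^x)^2/(\underline{\sigma}^2\epsilon)$ rather than $\overline{\mu}^x/(\underline{\sigma}^2\epsilon)$, since the interval has length $2\overline{\mu}^x$ while the required spacing is of order $\underline{\sigma}^2\epsilon/\overline{\mu}^x$ because of the cross term in \eqref{eq:mu_tilde_mu}; and the $\sigma$-grid must likewise be refined by a factor of $(\overline{\mu}^x)^2$ because $\|x-\mu_j^x\|^2$ is no longer bounded by $d_x$. Neither affects the conclusion, as the entropy enters only through its logarithm.
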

\begin{proof}
We will first show that an analog of the
{\em prior thickness} result from Theorem \ref{th:prior_thickness}
holds with the same choice of 
$\tilde{\epsilon}_n$.   By Corollary \ref{cor:dH_cond_bdd_dHjoint} in the appendix, for $p(\cdot | \cdot, \theta, m)$ defined in \eqref{eq:joint_mix_def},  
\begin{eqnarray}
d_h^2 (f_0, p(\cdot | \cdot, \theta, m)) \leq  
C_1 d_H^2(f_0 \bar{g}_0, p(\cdot |\theta, m) ).  
\end{eqnarray}
Since the joint density $f_0 \bar{g}_0$ satisfies the assumptions of Theorem 4 in \cite{ShenTokdarGhosal2013}, the rest of the proof of the prior thickness result is exactly the same as the proof of Theorem \ref{th:prior_thickness} except for $\pi_\mu$ in \eqref{eq:Pr_muU} would now denote the joint prior density of $(\mu_j^y,\mu_j^x)$.  

Next, we will construct an appropriate {\em sieve}. For  sequences $\underline{\alpha},  \overline{\mu}, \overline{\mu}^x, H, \underline{\sigma}, \overline{\sigma}$ to be chosen later, define:
\begin{eqnarray*}%\label{eq:sieve}
\mathcal{F} = \{p(y|x,\theta,m): \; m\leq H, \; \alpha_j \geq \underline{\alpha}, \; 
\sigma \in [\underline{\sigma}, \overline{\sigma}], \mu_j^y \in [-\overline{\mu}, \overline{\mu}]^{d_y}, \mu_j^x \in [-\overline{\mu}^x, \overline{\mu}^x]^{d_x}, \,j=1, \ldots,m\}.
\end{eqnarray*}
The choice of $S_{\mathcal{F}}$, an $\epsilon$-net of $\mathcal{F}$ is same as in the proof of Theorem \ref{th:sieve}  with the following modifications.  $S_{\mu^x}^m$ now contains centers of $|S_{\mu^x}^m| =  \lceil 192d_x (\overline{\mu}^x)^2/(\underline{\sigma}^2 \epsilon) \rceil$ equal length intervals 
partitioning $[-\overline{\mu}^x, \overline{\mu}^x]$. We also need an adjustment to $S_{\sigma}$ here: 
\begin{eqnarray*}
S_{\sigma} =  \{\sigma^{l},l=1, \ldots, N_{\sigma} = \lceil{ \log (\overline{\sigma}/\underline{\sigma})/ (\log (1 + \underline{\sigma}^2\epsilon/(384(\overline{\mu}^x)^2\max\{d_x,d_y\})}\rceil, \sigma^1= \underline{\sigma}, \\ (\sigma^{l+1} - \sigma^l)/\sigma^l = \underline{\sigma}^2\epsilon/(384(\overline{\mu}^x)^2\max\{d_x,d_y\})\}.  
\end{eqnarray*}

 Since we are dealing with possibly unbounded $\mathcal{X}$ here, we will find the covering number of $\mathcal{F}$ in $d_1$ instead of $d_{SS}$.   The only part different from the proof of Theorem \ref{th:sieve}  is the treatment of
$\abs{K_j - \tilde{K}_j }/K_j$.
To show that $\int \abs{K_j - \tilde{K}_j }/K_j g_0(x) dx \leq \epsilon/12$, we 
divide the range of integration into two parts: $\mathcal{X}_1=\{x \in 
\mathcal{X}:\, \abs{x_l} \leq \overline{\mu}^x, \, l = 1, \ldots, d_x\}$ and $\mathcal{X}\setminus \mathcal{X}_1$.

For $x \in \mathcal{X}_1$, the same argument as in the bounded covariate space (inequalities in \eqref{eq:mu_tilde_mu} and \ref{eq:K_j_tildeK_j}) combined with
$\norm{x} \leq d_x^{1/2}\overline{\mu}^x$,
$||x-\mu_j^x||^2\leq 4 (\overline{\mu}^x)^2 d_x$, 
$2||x|| +||\mu_j^x|| + ||\tilde{\mu}_j^x|| \leq 4 \overline{\mu}^x d_x^{1/2}$, 
$|\sigma - \tilde{\sigma}|/\sigma \leq \underline{\sigma}^2\epsilon/(384(\overline{\mu}^x)^2 d_x)$
and
$|\mu_{jl} - \tilde{\mu}_{jl}| \leq \epsilon \underline{\sigma}^2 / (192 d_x \overline{\mu}^x)$
imply 
$\abs{K_j - \tilde{K}_j }/K_j \leq \epsilon/24$.

For $x \in \mathcal{X} \setminus \mathcal{X}_1$,
the left hand side of \eqref{eq:mu_tilde_mu} is bounded above by
\begin{align}
& (2||x||^2 + 2||\mu_j^x||^2) \epsilon/(384(\overline{\mu}^x)^2 d_x) 
+
(||x|| +   d_x^{1/2} \overline{\mu}^x) d_x^{1/2}\epsilon/(192\overline{\mu}^x d_x) 
\\
& \leq \epsilon/96 + ||x||^2 \epsilon/(192(\overline{\mu}^x)^2 d_x) + 
||x|| \epsilon/(192\overline{\mu}^x d_x^{1/2})
\\
& \leq \epsilon/96 + ||x||^2 \epsilon/(96\overline{\mu}^x d_x^{1/2}),
\end{align}
where the last inequality holds for $\overline{\mu}^x \geq 1$ as $||x|| \geq \overline{\mu}^x$ for $x \in \mathcal{X} \setminus \mathcal{X}_1$. Since $\abs{1- e^r} \leq e^{\abs{r}}$ for all $r \in \mathbb{R}$,
\[
\abs{ \frac{K_j  - \tilde{K}_j}{K_j}} \leq  \exp \bigg(\frac{\epsilon}{96}+ \frac{\epsilon \norm{x}^2}{96 d_x^{1/2} \overline{\mu}^x} \bigg), \; \forall x \in \mathcal{X} \setminus \mathcal{X}_1.
\]

Now, 
\begin{eqnarray*}
\int \abs{ \frac{K_j  - \tilde{K}_j}{K_j}}  g_0(x) dx &\leq&  \int_{\mathcal{X}_1}  \abs{ \frac{K_j  - \tilde{K}_j}{K_j}} g_0(x) dx + \int_{\mathcal{X} \setminus \mathcal{X}_1}  \abs{ \frac{K_j  - \tilde{K}_j}{K_j}} g_0(x) dx  \\
&\leq& \frac{\epsilon}{24}  + \exp \bigg(\frac{\epsilon}{96} \bigg)\int_{\mathcal{X} \setminus \mathcal{X}_1}   
\exp \bigg( \frac{\epsilon \norm{x}^2}{96 d_x^{1/2} \overline{\mu}^x} \bigg) g_0(x) dx  \\
&\leq& \frac{\epsilon}{24}  + \exp \bigg(\frac{\epsilon}{96} \bigg)\int_{\mathcal{X} \setminus \mathcal{X}_1} \exp \big(-\kappa_{\epsilon} \norm{x}^2\big)  \exp (\kappa  \norm{x}^2) g_0(x) dx
\end{eqnarray*}
where $\kappa_{\epsilon}  =  \kappa -  \epsilon /(96 d_x^{1/2} \overline{\mu}^x) \geq \kappa/2$ for small $\epsilon$ and large 
$\overline{\mu}^x$.   Since $\norm{x} \geq  \overline{\mu}^x$ in $\mathcal{X} \setminus \mathcal{X}_1$, we have 
\begin{eqnarray*}
\int \abs{ \frac{K_j  - \tilde{K}_j}{K_j}}  g_0(x) dx &\leq&  \frac{\epsilon}{24}  + \exp \bigg(\frac{\epsilon}{96}\bigg)   \exp \big(-\kappa(\overline{\mu}^x)^2/2\big)
\int_{\mathcal{X} \setminus \mathcal{X}_1}   \exp (\kappa  \norm{x}^2) g_0(x) dx \\
&\leq& \frac{\epsilon}{24}  +  B \exp \bigg(\frac{\epsilon}{96}\bigg) \exp \big(-\kappa(\overline{\mu}^x)^2/2\big),
\end{eqnarray*}
where $B$ is defined in \eqref{ass_g0_nonc}.
For $(\overline{\mu}^x)^2 \geq -(2/\kappa) \log \{ \epsilon e^{-\epsilon/ 96} / 24 B \}$,  
\begin{eqnarray*}
\int \abs{ \frac{K_j  - \tilde{K}_j}{K_j}}  g_0(x) dx \leq  \frac{\epsilon}{12}. 
\end{eqnarray*}
Hence for $(\overline{\mu}^x)^2 \geq -(2/\kappa) \log \{ \epsilon e^{-\epsilon/ 96} / 24 B \}$,   following the proof of Theorem \ref{th:sieve} we obtain, 
\begin{align*}
J(\epsilon, \mathcal{F}, d_1) \leq & 
H \cdot \left \lceil \frac{16\overline{\mu}d_y}{\underline{\sigma}\epsilon} \right \rceil^{Hd_y}
\cdot \left \lceil \frac{192 d_x (\overline{\mu}^x)^2}{\underline{\sigma}^2 \epsilon} \right \rceil^{Hd_x}
\cdot H \left \lceil \frac{ \log (\underline{\alpha}^{-1}) }{\log (1 + \epsilon/[12H])} \right \rceil^{H-1}
\\ 
& \cdot \left \lceil \frac{ \log (\overline{\sigma}/\underline{\sigma})} {\log (1 + \underline{\sigma}^2\epsilon/[384 (\overline{\mu}^x)^2 \max\{d_x,d_y\}])} \right \rceil.
\end{align*}
Observe that $\Pi(\mathcal{F}^c)$ is bounded above by 
\[
  H^2 \exp\{-a_{13} \overline{\mu} ^{\tau_3}\} + H^2 \exp\{-a_{16} (\overline{\mu}^x)^{\tau_5}\}  + H^2 \underline{\alpha}^{a/H} 
  +   \exp\{-a_{10} H(\log H)^{\tau_1}  \} 
	+ \Pi (\sigma \notin [\underline{\sigma}, \overline{\sigma}]). 
\]
The rest of the proof follows the argument in the proof of Theorem \ref{tm:sieve_n}
with the same sequences and $\overline{\mu}^x = n^{1/\tau_5}$. 

\end{proof}

\section{Irrelevant covariates}
\label{sec:irr_cov}

In applications, researchers often tackle the problem of selecting a set of relevant covariates
for regression or conditional distribution estimation.
In the Bayesian framework, this is usually achieved by introducing latent indicator variables for inclusion 
of covariates in the model, see, for example, \cite{bhattacharya2014anisotropic}, \cite{shen2014adaptive}, \cite{yang2014minimax}.  This is equivalent to a Bayesian model averaging procedure, where every possible subset of covariates represents a model.
It is straightforward to extend the results of the previous sections to a model with latent indicator variables for covariate inclusion  and show that the 
posterior contraction rate will not be affected by the irrelevant covariates.
In this section, we show that even without introduction of the indicator variables, 
irrelevant covariates do not affect the posterior contraction rate in a version of our model with component specific scale parameters.

Let
$\theta=
\{
\mu_j^y,\mu_{j}^{x}, \alpha_j, j=1,2,\ldots; \; \sigma^y=(\sigma^y_1,\ldots,\sigma^y_{d_y}), \sigma^x=(\sigma^x_1,\ldots,\sigma^x_{d_x})\}$ and
\begin{equation*}
\label{eq:cond_mix_def_diffsigma}
p(y|x, \theta, m) = \sum_{j=1}^m\frac{ \alpha_j \exp\{-0.5\sum_k (x_k-\mu_{jk}^x)^2/(\sigma^x_k)^2 \}   
}
{\sum_{i=1}^m \alpha_i \exp\{-0.5\sum_k (x_k-\mu_{ik}^x)^2/(\sigma^x_k)^2 \}   
}\phi_{\mu_j^y,\sigma^y}(y).
\end{equation*}
Suppose $f_0$ depends only on the first $d_x^0<d_x$ covariates $x_{1d_x^0}=(x_1,\ldots,x_{d_x^0})$ with the marginal density $g_{1d_x^0}$.
Let us assume conditions \eqref{ass_g0_nonc}-\eqref{eq:asntildef0_exp_tails2} from Section \ref{sec:asns_nonc} with the following change in the definition of $\tilde{f}_0$:
for $\eta > 0$ and
a probability density function $\bar{g}_{1d_x^0}(x_{1d_x^0})$ with respect to the Lebesgue measure such that $\eta \bar{g}_{1d_x^0}(x_{1d_x^0}) \geq g_{1d_x^0}(x_{1d_x^0})$ for all $x \in \mathcal{X}$ and $\tilde{f}_0(y, x_{1d_x^0}) = f_0(y|x_{1d_x^0}) \bar{g}_{1d_x^0}(x_{1d_x^0}) \in  \mathcal{C}^{\beta,L,\tau_0}$.
In addition, let us assume that the tail condition \eqref{eq:asntildef0_exp_tails2} holds for $f_0g_0$.

For $l=1,\ldots,d_y$ and $k=1,\ldots,d_x$, 
$\sigma^y_l$ and $\sigma^x_k$ are assumed to be independent a priori with densities satisfying
\eqref{eq:asnPrior_sigma1}-\eqref{eq:asnPrior_sigma3}.  Other parts of the prior are assumed to be the same as in Section
\ref{sec:prior_nonc}.

Let us briefly explain why we introduce component specific scale parameters.
Our proof of the following corollary exploits the fact that
when $\mu_{j k}^{x}=0$ for $k > d_x^0$ and all $j$, covariates $x_k$ for $k > d_x^0$ do not enter the model, 
and for $\mu_{j k}^{x}$ near zero and large $\sigma^x_k$ for $k > d_x^0$ this holds approximately.  
At the same time, approximation arguments in the prior thickness results require $\sigma^x_k$ very close to zero for $k \leq d_x^0$.  
Thus, we need to allow scale parameters for relevant and irrelevant covariates to take different values 
(our assumption of different scale parameters for components of $y$ is not essential for the result).

\begin{corollary}
\label{cr:irr_cov}
Under the assumptions of this section, the posterior contracts at the rate specified in Theorem \ref{th:rate_cond_dens} with $d=d_y+d_x$ replaced by $d^0=d_y+d_x^0$.
\end{corollary}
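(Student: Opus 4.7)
The plan is to reduce the proof to the bound already established in Corollary \ref{cr:nonc}, applied to the lower-dimensional conditional density $f_0(y|x_{1d_x^0})$, by exploiting the key factorization: if for every component $j$ and every irrelevant coordinate $k > d_x^0$ one has $\mu^x_{jk} = \bar{\mu}^x_k$ with $\bar{\mu}^x_k$ not depending on $j$, then the factor $\exp\{-0.5(x_k-\bar{\mu}^x_k)^2/(\sigma^x_k)^2\}$ cancels from numerator and denominator of the weights, so $p(y|x,\theta,m)$ coincides exactly with the mixture on the relevant covariates only. This is what makes a common scale per component insufficient: we need $\sigma^x_k$ to take moderate values for $k > d_x^0$ while being forced to be close to $\sigma_n \to 0$ for $k \leq d_x^0$.

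For the prior thickness step, I would let $\theta^\star_{rel}$ denote the parameters produced by the proof of Theorem \ref{th:prior_thickness} (with $d$ replaced by $d^0$) applied to $\tilde{f}_0(y,x_{1d_x^0}) = f_0(y|x_{1d_x^0})\bar{g}_{1d_x^0}(x_{1d_x^0})$, yielding centers $(\mu^{y,\star}_j,\mu^{x_{1d_x^0},\star}_j)$ and a common bandwidth $\sigma_n$ for the first $d^0$ coordinates. Extend it by setting $\mu^{x,\star}_{jk} = 0$ and $\sigma^{x,\star}_k = 1$ for $k > d_x^0$. By the cancellation observation, $p(y|x,\theta^\star,m) = p(y|x_{1d_x^0},\theta^\star_{rel},m)$, so the Hellinger bound $d_h(f_0, p(\cdot|\cdot,\theta^\star,m)) \lesssim \sigma_n^\beta$ and the Kullback--Leibler bounds from Theorem \ref{th:prior_thickness} carry over verbatim. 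Next I would define a neighborhood $S_{\theta^\star}$ that imposes the same restrictions as before on $(\mu^y_j, \mu^{x_{1d_x^0}}_j, \alpha_j, \sigma^y, \sigma^x_{1:d_x^0})$ and, in addition, requires $\mu^x_{jk}$ to lie in a fixed small ball around $0$ and $\sigma^x_k$ to lie in a fixed interval around $1$ for $k > d_x^0$. A perturbation argument based on the inequalities derived in Corollary \ref{cr:nonc} (including the integration over $\mathcal{X}\setminus\mathcal{X}_1$ using the exponential moment \eqref{ass_g0_nonc}) shows that the Hellinger/KL bounds still hold on $S_{\theta^\star}$. The prior probability of the irrelevant-coordinate part of $S_{\theta^\star}$ is bounded below by a constant independent of $n$, so the prior thickness rate is governed solely by $d^0$.

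For the sieve construction, I would build $\mathcal{F}_n$ with two tiers of scale constraints: $\sigma^y_l$ and $\sigma^x_k$ for $k \leq d_x^0$ are required to lie in $[\underline{\sigma}_n,\overline{\sigma}_n]$ with $\underline{\sigma}_n = n^{-1/(2a_3)}$, while for $k > d_x^0$ the constraint is relaxed to $\sigma^x_k \in [c_0,\overline{\sigma}_n]$ for a fixed constant $c_0 > 0$. Similarly, $\mu^x_{jk}$ for $k > d_x^0$ is confined to $[-\overline{\mu}^x,\overline{\mu}^x]^{d_x - d_x^0}$ with $\overline{\mu}^x = n^{1/\tau_5}$. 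The covering-number calculation of Theorem \ref{th:sieve} extends by handling each coordinate separately. The essential point is that, because $\sigma^x_k$ is bounded below by the constant $c_0$ on the irrelevant coordinates, each such coordinate contributes only a polylogarithmic factor (rather than a polynomial-in-$n$ factor) to the log-entropy, so the dominant term $Hd_y\log(\overline{\mu}/\underline{\sigma}_n \epsilon_n) + Hd_x^0\log((\overline{\mu}^x)^2/\underline{\sigma}_n^2 \epsilon_n)$ is controlled as in Theorem \ref{tm:sieve_n} with $d$ replaced by $d^0$. The prior mass of $\mathcal{F}_n^c$ is handled coordinatewise using \eqref{eq:asnPrior_sigma1}--\eqref{eq:asnPrior_sigma2}.

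The main obstacle will be the perturbation estimate for the mixing-weight ratios in the generalized covering argument, analogous to \eqref{eq:mu_tilde_mu}--\eqref{eq:K_j_tildeK_j}, now that each coordinate has its own scale. Specifically, I have to show that modifying $\sigma^x_k$ and $\mu^x_{jk}$ simultaneously produces a uniformly small change in $K_j = \exp\{-0.5\sum_k(x_k - \mu^x_{jk})^2/(\sigma^x_k)^2\}$, integrated against $g_0$, splitting $\mathcal{X}$ into the box where $|x_k| \leq \overline{\mu}^x$ for all $k$ and its complement, and bounding the complement integral via \eqref{ass_g0_nonc}. The anisotropy introduces cross terms between the relevant coordinates (where $1/(\sigma^x_k)^2$ is large) and the irrelevant ones (where it is order one), but each cross term can be absorbed into the corresponding coordinatewise bound, so the same $\epsilon/12$ bound can be achieved. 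Once this estimate is in place, the remainder of the proof is the same bookkeeping as in the proofs of Theorem \ref{tm:sieve_n} and Corollary \ref{cr:nonc}, with $d$ replaced by $d^0$ throughout.
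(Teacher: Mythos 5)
Your core intuition is right and matches the paper's: when the irrelevant coordinates share a common center and scale across components, the corresponding Gaussian factors cancel from the mixing weights, so the conditional density collapses exactly to the lower-dimensional mixture; the quantitative work is then a perturbation of this exact identity. However, two of your quantitative choices break the argument. First, in the prior-thickness step you put $\mu^x_{jk}$ ($k>d_x^0$) in a \emph{fixed} small ball around $0$ and $\sigma^x_k$ in a \emph{fixed} interval around $1$. Then for two components $j,j'$ the log-ratio of their irrelevant-coordinate factors is of order $|x_k|\,|\mu^x_{jk}-\mu^x_{j'k}|$, i.e.\ a constant $\delta$ that does not shrink with $n$, so $d_h\bigl(p(\cdot|\cdot,\theta,m),\,p(\cdot|\cdot,\theta^\star,m)\bigr)$ is of constant order and $S_{\theta^\star}$ is not contained in $\mathcal{K}(f_0,A\tilde{\epsilon}_n)$. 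You need the irrelevant centers to shrink (the paper uses $\|(\mu^x_{j,d_x^0+1},\dots,\mu^x_{jd_x})\|\le\sigma_n\tilde{\epsilon}_n^{2b_1}$) together with \emph{growing} irrelevant scales $(\sigma^x_k)^2\asymp a_{\sigma_n}^2$, so the perturbation of $K_j$ is $O(|x_k|\sigma_n\tilde{\epsilon}_n^{2b_1}/a_{\sigma_n}^2)$; correspondingly the prior mass of this event is not a constant but $\exp\{-cK\log(1/\tilde{\epsilon}_n)\}$, which is still of acceptable order. Your statement that this prior mass is ``bounded below by a constant independent of $n$'' cannot hold in any version, since the constraint is imposed on all $K\to\infty$ components.

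Second, your sieve is fatally flawed: constraining $\sigma^x_k\in[c_0,\overline{\sigma}_n]$ for $k>d_x^0$ with a fixed $c_0>0$ makes $\Pi(\mathcal{F}_n^c)\ge\Pi(\sigma^x_k<c_0)$, a positive constant, violating \eqref{eq:sievecomplement}. The motivation for this two-tier sieve — that a relevant coordinate contributes a ``polynomial-in-$n$ factor'' to the entropy — misreads where the dimension enters the rate. In Theorem \ref{th:sieve} each coordinate contributes a factor whose logarithm is $O(H\log n)=O(n\epsilon_n^2)$ regardless of relevance, so the full $d_x$ only inflates the constant $c_1$ in \eqref{eq:entropy}, which is harmless; the exponent $d^0$ in the rate comes entirely from the prior-thickness side, through the number of components $K\asymp\sigma_n^{-d^0}$ needed to approximate the $d^0$-dimensional joint density. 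The correct sieve (as in the paper) keeps the same range $[\underline{\sigma},\overline{\sigma}]$ for all scale parameters and lets the extra coordinates affect only constants.
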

\begin{proof}
 First, consider the prior thickness result in Theorem \ref{th:prior_thickness}.
For any $\theta$ let 
\[
\theta_{d_x^0}=\{
\mu_j^y, \mu_{j1}^{x}, \ldots,\mu_{j d_x^0}^{x}, 
\alpha_j, j=1,2,\ldots; \; \sigma^y, \sigma^x_1,\ldots,\sigma^x_{d_x^0}
\}
\]
and define $p(\cdot | \cdot, \theta_{d_x^0},m)$ and $p(\cdot | \theta_{d_x^0},m)$ as before with $(y,x_{1d_x^0})$ as the arguments.
By the triangle inequality, 
\begin{align*}
d_h(f_0, p (\cdot | \cdot, \theta,m)) \leq d_h(f_0, p(\cdot | \cdot, \theta_{d_x^0},m))  + d_h(p(\cdot | \cdot, \theta_{d_x^0},m), p(\cdot | \cdot, \theta,m)). 
\end{align*}
By Corollary \ref{cor:dH_cond_bdd_dHjoint} in the appendix, $d_h(f_0, p(\cdot | \cdot, \theta_{d_x^0})) \leq C_1 
d_H(f_0 \bar{g}_{1d_x^0}, p(\cdot |  \theta_{d_x^0}))$.  By the argument leading to \eqref{eq:f0upsbeta} and \eqref{eq:P0_zgeqa}, there exist
$\theta_{d_x^0}^\star$ such that 
$d_H(f_0 \bar{g}_{1d_x^0}, p(\cdot |\theta_{d_x^0}^\star, m) ) \leq C_2 \sigma_n^\beta$,
$P_0(\norm{y,x} > a_{\sigma_n}) \leq B_0 \sigma_n^{4\beta + 2\epsilon +8}$, 
$z_j$ and $U_j$ are defined on the space for 
$(y,x_{1d_x^0})$, and
$1 \leq N < K \leq C_2 \sigma_n^{-d^0} \{\log (1/ \tilde{\epsilon}_n) \}^{d^0 +d^0/\tau}$.
Let
\begin{align*}
	S_{\theta^\star}=&\big \{
	(\mu_j, \alpha_j, \, j=1,2,\ldots; \sigma^y, \sigma^x): 
	 \; (\mu_j^y,\mu_{j1}^{x}, \ldots,\mu_{j d_x^0}^{x})  \in U_j,\; \\
	&
	||(\mu_{j d_x^0+1}^{x}, \ldots,\mu_{j d_x}^{x})|| \leq \sigma_n \tilde{\epsilon}_n^{2b_1}, \;
	j\leq K; \\
	& \sum_{j=1}^K \abs{\alpha_j - \alpha_j^\star} \leq 2\tilde{\epsilon}_n^{2 d^0 b_1}, \, \min_{j=1, \ldots, K} \alpha_j \geq \tilde{\epsilon}_n^{4 d^0 b_1}/2; 
	\\	& 
	(\sigma^x_k)^2, (\sigma^y_l)^2 \in [ \sigma_n^{2}/(1+ \sigma_n^{2\beta}), \sigma_n^{2} ],\;	l \leq d_y, \; k \leq d_x^0;\\
	& (\sigma^x_k)^2 \in [ a_{\sigma_n}^2,  2a_{\sigma_n}^2],\; 	k=d_x^0+1, \ldots, d_x
	\big \}.
\end{align*}
For $\theta \in S_{\theta^\star}$ and $m = K$, as in the proof of Theorem \ref{th:prior_thickness}, 
\begin{align}\label{eq:diff}
d_H(f_0 \bar{g}_{1d_x^0}, p(\cdot |\theta_{d_x^0}, m) )   \leq  d_H(f_0 \bar{g}_{1d_x^0}, p(\cdot |\theta_{d_x^0}^*, m) )   +  
d_H(p(\cdot |\theta_{d_x^0}, m), p(\cdot |\theta_{d_x^0}^*, m) ) \leq C_3 \sigma_n^\beta. 
\end{align}
Next, we tackle $d_h(p(\cdot | \cdot, \theta_{d_x^0},m), p(\cdot | \cdot, \theta,m))$.   Following  the entropy
calculations in Theorem \ref{th:sieve},  we have for $m=K$, 
\[
d_h^2(p(\cdot | \cdot, \theta_{d_x^0},m), p(\cdot | \cdot, \theta,m)) \leq \int \max_{1\leq j \leq K}  
|K_j - \tilde{K}_j|/\abs{K_j} g_0(x) dx,\] 
where  
\begin{align*}
K_j = \exp \bigg\{-\sum_{k=1}^{d_x}  \frac{(x_k - \mu_{jk}^x)^2}{2(\sigma_k^x)^2} \bigg\}, \quad 
\tilde{K}_j = \exp \bigg\{-\sum_{k=1}^{d_x^0}  \frac{(x_k - \mu_{jk}^x)^2}{2(\sigma_k^x)^2}  
- \sum_{k = d_x^0 +1}^{d_x} \frac{x_k^2}{2(\sigma_k^x)^2}\bigg\}, 
\end{align*}
and $\tilde{K}_j$ is normalized in a convenient way.
To show that $\int |K_j - \tilde{K}_j |/K_j g_0(x) dx \leq 2\sigma_n^{2\beta}$, we 
divide the range of integration into two parts: $\mathcal{X}_1=\{x \in 
\mathcal{X}:\, \abs{x_l} \leq A_n, \, l = d_x^0+1, \ldots, d_x\}$ and $\mathcal{X}\setminus \mathcal{X}_1$, where $A_n = a_{\sigma_n}^2 \log (B/\sigma_n^{2\beta})$
and $B$ is defined in assumption \eqref{ass_g0_nonc}.    
Observe that for $\theta \in S_{\theta^\star}$, $x \in \mathcal{X}_1$, and all sufficiently large $n$,  
\begin{align*}
\abs{\sum_{k=d_x^0+1}^{d_x}  \frac{(2x_k - \mu_{jk}^x)(-\mu_{jk}^x)}{2(\sigma_k^x)^2}}  \leq 
 \frac{2A_n(d_x - d_x^0)\sigma_n \tilde{\epsilon}_n^{2b_1}}{a_{\sigma_n}^2} \leq 1
\end{align*}
and, hence, using $\abs{1- e^r} \leq \abs{r}$ for $\abs{r} \leq 1$, we obtain for 
$\theta \in S_{\theta^\star}$ and $x \in \mathcal{X}_1$, 
\begin{align}\label{inc:eq1}
\abs{\frac{K_j - \tilde{K}_j}{K_j}}  =  \abs{1- 
\exp \bigg\{\sum_{k=d_x^0+1}^{d_x}  \frac{(2x_k - \mu_{jk}^x)(-\mu_{jk}^x)}{2(\sigma_k^x)^2} \bigg\}} \leq  \frac{2A_n(d_x - d_x^0)\sigma_n \tilde{\epsilon}_n^{2b_1}}{a_{\sigma_n}^2} \leq \sigma_n^{2\beta}. 
\end{align}
For $x \in \mathcal{X}\setminus \mathcal{X}_1$ using $\abs{1- e^r} \leq e^{\abs{r}}$, 
\begin{align*}
\int_{ \mathcal{X}\setminus \mathcal{X}_1} \max_{1\leq j \leq K} \abs{\frac{K_j - \tilde{K}_j}{K_j}}  g_0(x) dx  
&\leq \int_{ \mathcal{X}\setminus \mathcal{X}_1}  e^{\sum_{k=d_x^0+1}^{d_x} \frac{\abs{x_k}}{a_{\sigma_n}^2} - \kappa \norm{x}^2}  e^{\kappa \norm{x}^2} g_0(x) dx.
\end{align*}
For $x \in \mathcal{X}\setminus \mathcal{X}_1$,  $\kappa \norm{x}^2 \geq \kappa (d_x - d_x^0)^{-1}(\sum_{k=d_x^0+1}^{d_x} \abs{x_k})^2 \geq 2\sum_{k=d_x^0+1}^{d_x} \abs{x_k}/a_{\sigma_n}^2$ and hence 
\begin{align}\label{inc:eq2}
\int_{ \mathcal{X}\setminus \mathcal{X}_1} \max_{1\leq j \leq K} \abs{\frac{K_j - \tilde{K}_j}{K_j}}  g_0(x) dx  \leq Be^{-A_n/a_{\sigma_n}^2} \leq \sigma_n^{2\beta}. 
 \end{align}
From \eqref{inc:eq1} and  \eqref{inc:eq2}, it follows that $d_h(p(\cdot | \cdot, \theta_{d_x^0}), p(\cdot | \cdot, \theta)) \leq 2^{1/2}\sigma_n^\beta$.

Next let us establish an analog of \eqref{eq:lambda_def} when $\norm{(y, x)} \leq a_{\sigma_n}$.
Using the argument leading to \eqref{eq:lambda_def} 
with $\norm{(y,x_{1d_x^0})} \leq a_{\sigma_n}$
and $((x_k - \mu_{jk}^x)/\sigma^x_k)^2 \leq 4$ for $k =d_x^0+1, \ldots, d_x$, $j=1, \ldots, m$,
we get for $\theta \in S_{\theta^\star}$ and $\norm{(y, x)} \leq a_{\sigma_n}$,
\begin{align*}
\frac{p(y | x, \theta, m)}{f_0(y|x)} \geq  
\frac{1}{f_0(y|x)} \min_{j=1, \ldots, K} \alpha_j \cdot \sigma_n^{-d_y} 
\exp\left\{-\sum_{k=d_x^0+1}^{d_x}  \frac{(x_k - \mu_{jk}^x)^2}{2(\sigma_k^x)^2}\right\}
\geq C_5 \tilde{\epsilon}_n^{4 d^0 b_1} \sigma_n^{-d_y} = \lambda_n. 
\end{align*}

For $\norm{(y, x)} \geq a_{\sigma_n}$,  
\begin{align*}
p(y \mid x, \theta, m) \geq \min_{1\leq j \leq m} C_6\sigma_n^{-d_y} \exp \bigg\{- \frac{||y - \mu_j^y||^2}{2\sigma_n^2}\bigg\} \geq C_7 \sigma_n^{-d_y} 
\exp\bigg(- C_8\frac{a_{\sigma_n}^2}{\sigma_n^2}  - C_{9} \frac{\norm{y}^2}{\sigma_n^2} 
\bigg)
\end{align*}
implies
\begin{align*}
\bigg\{\log \frac{f_0(y|x)}{p(y|x, \theta, m)}\bigg\}^2 \leq C_{10} \bigg(\frac{a_{\sigma_n}^4}{\sigma_n^4} + \frac{\norm{y}^4}{\sigma_n^4} \bigg). 
\end{align*} 
Then, following the proof of Theorem \ref{th:prior_thickness}, 
\begin{align*}
& \int \bigg\{\log \frac{f_0(y|x)}{p(y|x, \theta, m}\bigg\}^2 1\bigg\{ \frac{p(y|x, \theta, m}{f_0(y|x)}  < \lambda_n\bigg\} f(y|x) g_0(x) dydx 
\\
& \leq C_{11} \bigg\{ \frac{a_{\sigma_n}^4 P_0(\norm{Z} > a_{\sigma_n}) }{\sigma_n^4}   +   
\frac{ E_0(\norm{Y}^8)^{1/2} (P_0(\norm{Z} > a_{\sigma_n}))^{1/2}}{\sigma_n^4}
\bigg\}  
\leq C_{12} \sigma_n^{2\beta + \varepsilon/2}\sigma_n^{\varepsilon/2} a_{\sigma_n}^4 \\
& \leq   C_{12} \sigma_n^{2\beta + \varepsilon/2}.
\end{align*}
The rest of 
the proof of
$E_0 (\log (f_0(Y|X)/p(Y|X, \theta, m)))\leq A\tilde{\epsilon}_n^2$ and $E_0 ([\log (f_0(Y|X)/p(Y|X, \theta, m))]^2) \leq A\tilde{\epsilon}_n^2$ goes through without any changes.
   
The lower  bound for the prior probability of $S_{\theta^\star}$ and $m=K$ is the same as the one in 
Theorem \ref{th:prior_thickness}, except $d$ is replaced with $d^0$.  The only additional calculation for $\sigma_k^x$,
$k=d_x^0+1,\ldots,d_x$ follows from Assumption \eqref{eq:asnPrior_sigma3},
\begin{equation*}
\Pi((\sigma_k^x)^{-2} \in [ a_{\sigma_n}^{-2}/2, a_{\sigma_n}^{-2} ]) \gtrsim
a_{\sigma_n}^{-2a_7}. 
\end{equation*}

In the definition of sieve 
\eqref{eq:sieve}, let us replace  condition $\sigma \in [\underline{\sigma}, \overline{\sigma}]$
by
\[\sigma_l^y, \sigma^x_k \in [\underline{\sigma}, \overline{\sigma}],\; l=1,\ldots,d_y, \; k=1,\ldots,d_x.\]
The presence of the component specific scale parameters and the dimension of $x$ affect only constants 
in the sieve entropy bound and the bound on the prior probability of the sieve complement.
Thus, Theorem \ref{tm:sieve_n} holds with $d$ replaced by $d^0$.

\end{proof}

%\[
%\phi_{\mu_J,\sigma}(z) \geq \phi_{(\mu^y_J, \mu^x_{J1},\ldots,\mu^x_{Jd_x^0}),
%(\sigma^y,\sigma^x_1,\ldots,\sigma^x_{d_x^0})}(y,x_1,\ldots,x_d_{x}^0)) \cdot
%\phi_{(\mu^x_{Jd_x^0+1},\ldots,\mu^x_{Jd_x}),
%(\sigma^x_{d_x^0+1},\ldots,\sigma^x_{d_x})}(x_{d_x^0+1},\ldots,x_{d_{x}}))
%\]

\section{Finite sample performance}
\label{sec:simul}

In this section, we evaluate the finite sample performance of our conditional density model in Monte Carlo simulations.
Specifically, we explore how the sample size and irrelevant covariates affect estimation results.
We also compare our estimator with a conditional density kernel estimator from \cite{Hall2004} 
that is based on a cross-validation method for obtaining the band-width.  \cite{Hall2004} showed that irrelevant covariates do not affect 
the convergence rate of their estimator, and, thus, this estimator appears to be a suitable benchmark.
The kernel estimation results are obtained by the publicly available R package \texttt{np} (\cite{Hayfield2008}).

It has been established in the literature (see, for example, \cite{VillaniKohnGiordani:07}), that slightly more general 
specifications of covariate dependent mixture models perform better in practice.  Thus, we use the following specification
\begin{equation}
\label{eq:cond_mix_finsam}
p(y|x, \theta, m) = \sum_{j=1}^m\frac{ \alpha_j \exp\{-0.5\sum_{k=1}^{d_x}(x_k-\mu_{jk}^x)^2/(\sigma_{k}^x s^x_{jk})^2 \}   
}
{\sum_{i=1}^m \alpha_i \exp\{-0.5\sum_{k=1}^{d_x}(x_k-\mu_{ik}^x)^2/(\sigma^x_{k} s^x_{ik})^2 \}   
}\phi_{x^\prime \beta_j,\sigma^y s^y_j}(y),
\end{equation}
where $x^\prime \beta_j$ are used instead of locations $\mu_j^y$, $x$ includes zeroth coordinate equal 1, and local scale parameters $(s^y_j,s^x_{jk})$ introduced in addition to the global $(\sigma^y,\sigma^x_{k})$.
The prior is specified as follows,
\begin{align*}
&\beta_j  \stackrel{iid}{\sim} N(\ul{\beta}, \ul{H}_\beta^{-1}), \; 
\mu_j   \stackrel{iid}{\sim} N(\ul{\mu}, \ul{H}_\mu^{-1}), \\
&(s^y_j)^{-2}  \stackrel{iid}{\sim} G(\ul{A}_{s y}, \ul{B}_{s y}), \;   
(s^x_{jk})^{-2} \stackrel{iid}{\sim} G(\ul{A}_{s xk}, \ul{B}_{s xk }), \; k=1,\ldots,d_x, \\ 
&(\sigma^y)^{-1}  \stackrel{iid}{\sim} G(\ul{A}_{\sigma y}, \ul{B}_{\sigma y}), \;   
(\sigma^x_{k})^{-1}  \stackrel{iid}{\sim} G(\ul{A}_{\sigma xk}, \ul{B}_{\sigma xk}), \; k=1,\ldots, d_x, \\ 
&(\alpha_1,\ldots,\alpha_m)|m  \stackrel{iid}{\sim} D(\ul{a}/m,\ldots,\ul{a}/m), \\
&\Pi\big(m=k\big)  = (e^{\ul{A}_m}-1) e^{-\ul{A}_m \cdot k},
\end{align*}
where $G(A,B)$ stands for a Gamma distribution with shape $A$ and rate $B$.
It is shown in Theorem \ref{thm:lincoeflocals} in Appendix that an analog of Corollary \ref{cr:irr_cov} holds for this slightly more general setup.
To obtain estimation results for this model we use an MCMC algorithm developed in \cite{Norets:15}.

We use the following (data-dependent) values for prior hyper-parameters: 
\begin{align*}
&\ul{\beta} = \left(\sum_i x_i x_i^\prime\right)^{-1} \sum_i x_i y_i, \;  
\ul{H}_\beta^{-1} = \ul{c}_\beta\left(\sum_i x_i x_i^\prime\right)^{-1} \sum_i(y_i - x_i^\prime \ul{\beta})^2/n,\\
&\ul{\mu} = \sum_i x_i/n, \; \ul{H}_\mu^{-1} = \sum_i (x_i-\ul{\mu})(x_i-\ul{\mu})^\prime/n, \\
& \ul{A}_{\sigma y} = \ul{c}_\sigma/\left(\sum_i(y_i - x_i^\prime \ul{\beta})^2/n\right), \; 
\ul{B}_{\sigma y} = \ul{c}_\sigma/\left({\sum_i(y_i - x_i^\prime \ul{\beta})^2/n}\right)^{1/2},\\
& \ul{A}_{\sigma xl} = \ul{c}_\sigma/\left(\sum_i(x_{il} - \sum_i x_{il}/n)^2/n\right), \; 
\ul{B}_{\sigma xl} = \ul{c}_\sigma/\left({\sum_i(x_{il} - \sum_i x_{il}/n)^2/n}\right)^{1/2},\\
& \ul{A}_{s xk} = \ul{c}_s, \; \ul{B}_{s xk} = \ul{c}_s,\; \ul{A}_{s y}=\ul{c}_s, \; \ul{B}_{s y}=\ul{c}_s,\\
& \ul{a}=15, \;\ul{A}_m=1,
\end{align*}
where $\ul{c}_\beta=100$, $\ul{c}_\sigma=0.1$, $\ul{c}_s=10$.  
Thus, a modal prior draw would have one mixture component and it would be near a normal linear regression estimated by the least squares.
As Figure \ref{fig:prior} illustrates, the prior variances are chosen sufficiently large so that  a wide range of densities can be easily accommodated by the prior.
\begin{figure}
\includegraphics[width=100mm]{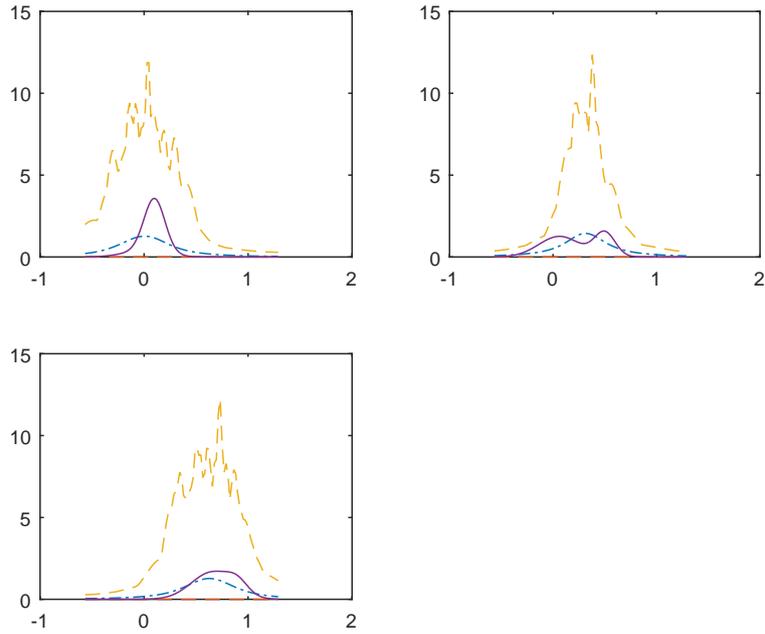}%
\caption{
Simulated prior conditional densities for $d_x=1$ and $x \in \{0.1,0.5,0.9\}$: the solid lines are the DGP values,
the dash-dotted lines are the prior means, and the dashed lines are pointwise 99.99\% 
 prior credible intervals.
}
\label{fig:prior}
\end{figure}

The DGP for simulation experiments is as follows: $x_i=(x_{i1},\ldots,x_{id_x})$, $x_{ik} \sim U[0,1]$ (or $x_{ik} \sim N(0.5, 12^{-1/2})$ for an unbounded support case) and the true conditional density is 
\begin{align}
\label{eq:DGP}
f_0(y_i|x_{i1})=e^{-2x_{i1}}N(y_i;x_{i1},0.1^2)+(1-e^{-2x_{i1}})N(y_i;x_{i1}^4,0.2^2).
\end{align}
Note that the DGP conditional density depends only on the first coordinate of $x_i$, the rest of the coordinates are irrelevant.  
This DGP was previously used without irrelevant covariates by 
\cite{DunsonPillaiPark:07}, \cite{DunsonPark:08}, and \cite{NoretsPelenis:11}.

For each estimation exercise, we
perform $5,000$ MCMC iterations, of which the first $500$ are discarded for burn-in.
The MCMC draws of $m$ mostly belong to $\{3,\ldots,13\}$.
Figure \ref{fig:fig100dx2} presents Bayesian and kernel estimation results for one dataset of size $n=1000$ and $d_x=2$.
Each panel in the figure shows the DGP densities, the kernel estimates, the posterior means, and the posterior 0.01\%-quantiles
conditional on a particular value of covariate $x \in \{0.1,0.5,0.9\}^2$. 
As can be seen from the figure, the estimation results from both approaches can be pretty close.
\begin{figure}
\includegraphics[width=160mm]{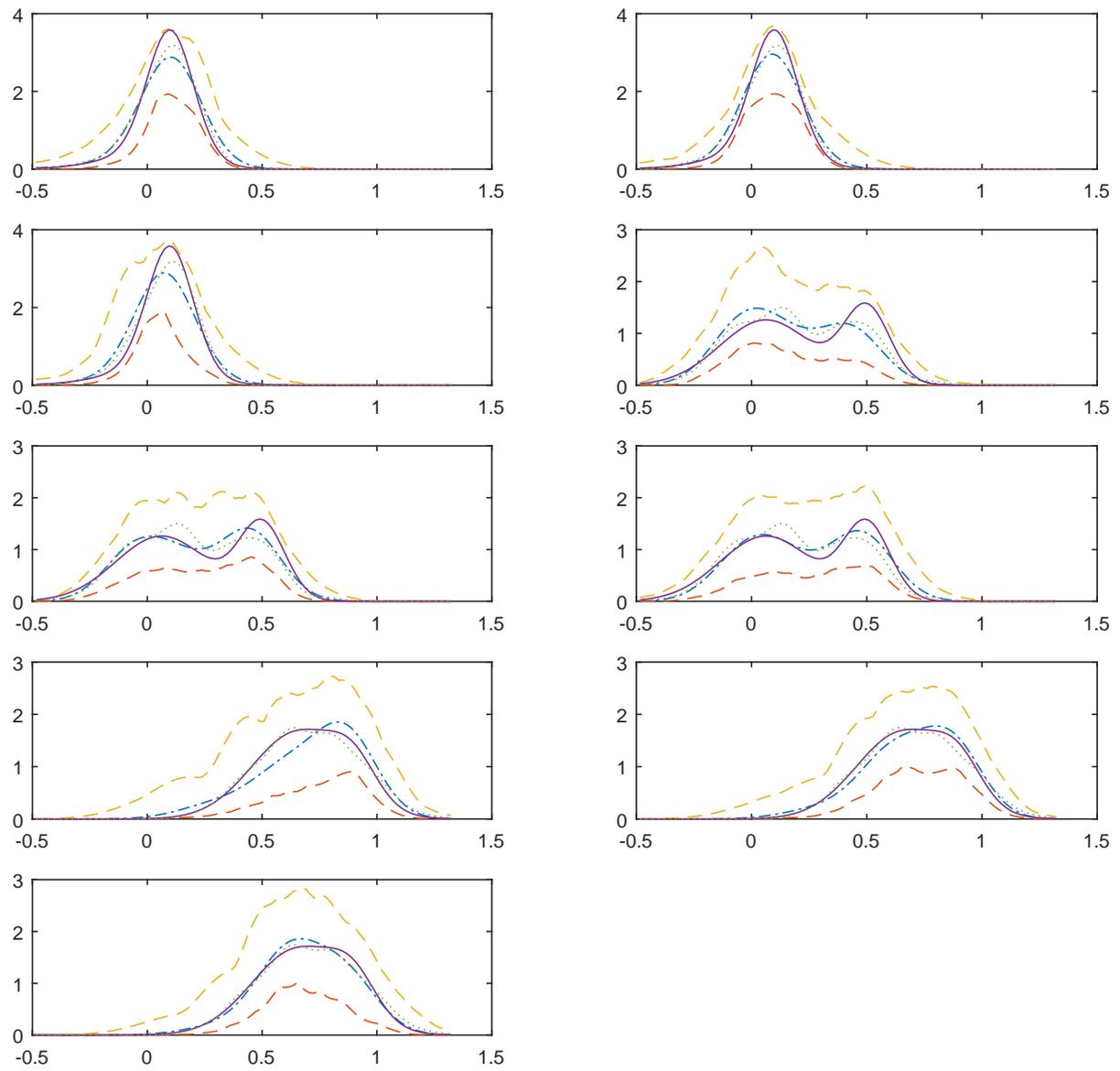}%
\caption{
Estimated conditional densities for $d_x=2$ and $x \in \{0.1,0.5,0.9\}^2$.  The solid lines are the true values,
the dash-dotted lines are the posterior means, the dotted lines are kernel estimates, and the dashed lines are pointwise 99.99\% 
 credible intervals.
}
\label{fig:fig100dx2}
\end{figure}

In every Monte Carlo experiment we perform, 50 simulated datasets are used.
For each dataset, the performance of an estimator is evaluated by the mean absolute error 
\begin{align*}
%RMSE &= \sqrt{ \frac{\sum_{i=1}^{N_y}\sum_{j=1}^{N_x}\left( \hat{f}(y_i|x_j)-f_0(y_i|x_j) \right)^2}{N_yN_x}  }\\
\mbox{MAE} & = \frac{\sum_{i=1}^{N_y}\sum_{j=1}^{N_x}\left| \hat{f}(y_i|x_j)-f_0(y_i|x_j) \right|}{N_yN_x},  
\end{align*}
where $x_j\in\{0.1,0.5,0.9\}^{d_x}$ and $y_i$ belongs to a 100 points equal spaced grid on the range of simulated values for $y$.
The results reported below are qualitatively the same for the root mean squared error.

Table \ref{tb:MC} presents estimation results for 9 
Monte Carlo experiments based on different values of $n$, $d_x$, the covariate support, and the prior.
The table gives MAE for the kernel and posterior mean estimators averaged over 50 simulated datasets.
It also shows the average difference between MAEs of the two estimators and the corresponding $t$-statistics.
In all the experiments, the posterior mean estimator performs better than the kernel estimator and the differences are highly statistically significant. 
The first three rows of the table present the results for the covariates with bounded support, $d_x=1$, and $n \in \{10^2,10^3,10^4\}$.  As expected, the MAE decreases as the sample size increases for both estimators.
The next five rows of the table show the results for $d_x \in \{1,3,5\}$ and $n =10^3$ for covariates with bounded and unbounded support.  
Even though the posterior mean outperforms the kernel estimator in absolute terms, the MAE for kernel estimator impressively changes little when the number of irrelevant covariates increases.
The last raw of the table shows the results for $d_x=1$, $n=10^3$, and the following alternative prior hyperparameters:
$\ul{c}_\beta=200$, $\ul{c}_\sigma=0.2$, $\ul{c}_s=15$, $\ul{a}=12$, and $\ul{A}_m=2$.
Thus, the results are not very sensitive to reasonable variations in $(\ul{c}_\beta, \ul{c}_\sigma,\ul{c}_s,\ul{a},\ul{A}_m)$.

The dimension of the covariates does not noticeably affect the computing time for the posterior mean estimator.
The computations for the kernel estimator are very fast for low-dimensional covariates.
They slow down considerably when $d_x$ increases.
For $d_x=5$, the posterior mean is slightly faster to compute than the kernel estimator.

Overall, the Monte Carlo experiments suggest that the model proposed in this paper is a practical and promising alternative to 
classical non-parametric methods.

\begin{table}[htbp]
 \centering
  \caption{MAE for kernel and posterior mean estimators}
	\label{tb:MC}
    \begin{tabular}{cccccccc}
    \toprule
$x_{ik} \sim g_0$ 	& $d_x$ & n 			 & Bayes & Kernel & B-K   & \%(B $<$ K) & t-stat \\
    \midrule
$U[0,1]$ 						&  1    & $10^2$   & 0.107 & 0.164 & -0.058 & 1  					& -15.47 \\
$U[0,1]$ 						&  1    & $10^4$   & 0.032 & 0.040 & -0.008 & 0.88  			& -8.28 \\
$U[0,1]$ 						&  1    & $10^3$   & 0.062 & 0.096 & -0.033 & 1  					& -16.16 \\
$U[0,1]$ 						&  3    & $10^3$   & 0.074 & 0.097 & -0.022 & 0.96  			& -13.40 \\
$U[0,1]$ 						&  5    & $10^3$   & 0.084 & 0.098 & -0.015 & 0.86  			& -7.88 \\
$N(0.5, 12^{-1/2})$ & 1     & $10^3$   & 0.028 & 0.054 & -0.026 & 1  					& -15.33 \\
$N(0.5, 12^{-1/2})$ & 3     & $10^3$   & 0.033 & 0.054 & -0.021 & 1  					& -11.78 \\
$N(0.5, 12^{-1/2})$ & 5     & $10^3$   & 0.038 & 0.054 & -0.017 & 0.92 			 	& -8.88 \\
$U[0,1]$ 						&  1    & $10^3$   & 0.060 & 0.096 & -0.036 & 1  					& -17.72 \\
				\bottomrule
    \end{tabular}%
 \end{table}%

\section{Conclusion}
\label{sec:conclusion}

We show above that under a reasonable prior distribution, the posterior contraction rate in our model is bounded above by
$\epsilon_n=n^{-\beta/(2\beta+d)} (\log n)^t$
for any
\[
t > [d(1 + 1/\beta + 1/\tau) + \max\{\tau_1,1,\tau_2/\tau\}] / (2 + d/\beta) +  \max \{0, (1- \tau_1)/2\}.
\]
Rate $n^{-\beta/(2\beta+d)}$ is minimax for estimation of multivariate densities
when their smoothness level is $\beta$ and dimension of $(y,x)$ is $d$.
Since the total variation distance between joint densities  for $(y,x)$ is bounded by the sum of the integrated total variation distance between the conditional densities and the total variation distance between
the densities of $x$, the minimax rate for estimation of conditional densities of smoothness $\beta$ 
in integrated total variation distance
cannot be faster than $n^{-\beta/(2\beta+d)}$.  Thus, we can claim that our Bayesian nonparametric model achieves optimal contraction rate up to a log factor.
We are not aware of analogous results for estimators based on kernels or mixtures. 
In the classical settings, \cite{Efromovich:07} develops an estimator based on orthogonal series that achieves minimax rates for one-dimensional $y$ and $x$.
In a recent paper, \cite{shen2014adaptive} consider a compactly supported Bayesian model for conditional densities based on tensor products of spline functions.  
They show that under suitable sparsity assumptions, 
the posterior contracts at an optimal rate even when 
the dimension of covariates increases exponentially with the sample size.
An advantage of our results is that we do not need to assume a known upper bound on the smoothness level and 
the boundedness away from zero for the true density.
The analysis of the posterior contraction rates in our model under 
sparsity and increasing dimension of covariates is an important direction for future work.

\bibliographystyle{ecta}
\bibliography{allreferences}

\section{Appendix}

\begin{lemma} 
\label{lm:dH_cond_bdd_dHjoint}
Suppose $f, f_0 \in \mathcal{F}$, $g_0(x) \leq \bar{g} < \infty$, $g(x)$ and $u(x)$ are densities on $\mathcal{X}$, 
$u(x) \geq \underline{u}>0$.  Then,
\[
d_h^2(f_0,f) \leq \frac{4\bar{g}}{\underline{u}} \int \left(\sqrt{f_0(y|x) u(x)}-\sqrt{f(y|x) g(x)}\right)^2 dydx.
\]
\end{lemma}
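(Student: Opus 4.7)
The plan is to bound the conditional Hellinger distance by controlling two sources of discrepancy: the difference between $f_0$ and $f$ as conditional densities, and the difference between the "artificial" marginal $u$ and the actual marginal $g$, then absorb the latter into the joint Hellinger distance.

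First, I would multiply and divide by $\sqrt{u(x)}$ to rewrite
\[
(\sqrt{f_0(y|x)} - \sqrt{f(y|x)})^2 = \frac{1}{u(x)}\left(\sqrt{f_0(y|x)u(x)} - \sqrt{f(y|x)u(x)}\right)^2.
\]
Then, inserting $\pm \sqrt{f(y|x) g(x)}$ and writing
\[
\sqrt{f(y|x)u(x)} - \sqrt{f(y|x)g(x)} = \sqrt{f(y|x)}\bigl(\sqrt{u(x)} - \sqrt{g(x)}\bigr),
\]
the elementary inequality $(a-b)^2 \leq 2a^2 + 2b^2$ gives
\[
(\sqrt{f_0(y|x)} - \sqrt{f(y|x)})^2 \leq \frac{2}{u(x)}\left(\sqrt{f_0(y|x)u(x)} - \sqrt{f(y|x)g(x)}\right)^2 + \frac{2 f(y|x)}{u(x)}\left(\sqrt{u(x)} - \sqrt{g(x)}\right)^2.
\]

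Next, I would multiply by $g_0(x)$, integrate in $(y,x)$, and use $g_0 \leq \bar g$, $u \geq \underline u$, and $\int f(y|x)\,dy = 1$ for the second term, obtaining
\[
d_h^2(f_0, f) \leq \frac{2\bar g}{\underline u} \, d_H^2(f_0 u, f g) + \frac{2\bar g}{\underline u} \int (\sqrt{u(x)} - \sqrt{g(x)})^2 \, dx.
\]

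The key remaining step is to show the marginal Hellinger term is dominated by the joint Hellinger, i.e.
\[
\int (\sqrt{u} - \sqrt{g})^2 \, dx \leq d_H^2(f_0 u, f g).
\]
This is the standard data-processing/contraction property of Hellinger: writing $H^2 = 2 - 2A$ with affinity $A(p_1,p_2) = \int \sqrt{p_1 p_2}$, I would compute
\[
A(f_0 u, f g) = \int \sqrt{u(x) g(x)} \left(\int \sqrt{f_0(y|x) f(y|x)} \, dy\right) dx \leq \int \sqrt{u(x) g(x)} \, dx = A(u,g),
\]
where the inequality uses Cauchy–Schwarz together with $\int f_0(y|x)\,dy = \int f(y|x)\,dy = 1$. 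This yields $H^2(u,g) \leq H^2(f_0 u, f g)$, and plugging back produces the claimed factor $4\bar g/\underline u$. The only subtle step is recognising that the stray marginal term must be handled via the affinity/data-processing argument rather than any direct pointwise bound.
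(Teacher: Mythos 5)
Your proposal is correct and follows essentially the same route as the paper: bound $d_h^2$ by $(\bar g/\underline u)\,d_H^2(f_0u,fu)$, split via $\pm\sqrt{f g}$ into the joint Hellinger term plus the marginal term $\int(\sqrt{u}-\sqrt{g})^2dx$, and absorb the latter using the affinity inequality $A(f_0u,fg)\le A(u,g)$ (the paper derives $\int\sqrt{f_0 f}\,dy\le 1$ by AM--GM rather than Cauchy--Schwarz, an immaterial difference). No gaps.
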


\begin{proof} %(Lemma \ref{lm:dH_cond_bdd_dHjoint})
Observe that 
\begin{eqnarray}
d_h^2(f_0,f) &=& \int \left(\sqrt{f_0(y | x)} - \sqrt{f(y | x)}\right)^2  g_0(x) dydx \nonumber\\
 &\leq&  \frac{\bar{g}}{\underline{u}} \int \left(\sqrt{f_0(y | x) u(x)} - \sqrt{f(y | x)u(x)}\right)^2dydx \nonumber\\
 &\leq&  \frac{2\bar{g}}{\underline{u}} (\mbox{I} + \mbox{II}), \label{eq0:lem} 
\label{eq:ineq_I_II}
\end{eqnarray}
where $\mbox{I} =\int \left(\sqrt{f_0(y | x) u(x)} - \sqrt{f(y | x)g(x)}\right)^2dydx$, $\mbox{II}=  \int \left(\sqrt{f(y | x) g(x)} - \sqrt{f(y | x)u(x)}\right)^2dydx$.  
 
Observe that
\begin{eqnarray}
\mbox{II} \leq  \int \left(\sqrt{g(x)} - \sqrt{u(x)}\right)^2dx = 2 \left( 1- \int \sqrt{g(x) u(x)} dx \right)  \leq  \mbox{I}.\label{eq1:lem}
  \end{eqnarray}
The final inequality in \eqref{eq1:lem} follows since $\int \sqrt{f_0(y |x) f(y|x)} dy \leq \frac{1}{2} \left(\int  f_0(y |x)dy + \int f(y|x)dy  \right) =1$. 
Combining \eqref{eq0:lem} and \eqref{eq1:lem}, we obtain 
\begin{eqnarray*}
d_h^2(f_0,f) \leq   4\mbox{I} = \frac{4\bar{g}}{\underline{u}}\int \left(\sqrt{f_0(y|x) u(x)}-\sqrt{f(y|x) g(x)} \right)^2 dydx. 
\end{eqnarray*}

\end{proof}

\begin{corollary} 
\label{cor:dH_cond_bdd_dHjoint}
Suppose $f, f_0 \in \mathcal{F}$,  $g(x)$ and $\bar{g}_0(x)$ are densities on $\mathcal{X}$,  with $\bar{g}_0$ satisfying $\eta \bar{g}_0(x) \geq g_0(x)$ for some constant $\eta > 0$ and all $x \in \mathcal{X}$.  Then,
\[
d_h^2(f_0,f) \leq 4 \eta \int \left(\sqrt{f_0(y|x) \bar{g}_0(x)}-\sqrt{f(y|x) g(x)}\right)^2 dydx.
\]
\end{corollary}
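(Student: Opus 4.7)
The plan is to mirror the structure of Lemma \ref{lm:dH_cond_bdd_dHjoint}, but to replace the use of a uniform lower bound $\underline{u}$ on the auxiliary density by the uniform upper bound hypothesis $g_0 \leq \eta \bar{g}_0$. This is the natural weakening, since the role $u$ played in Lemma \ref{lm:dH_cond_bdd_dHjoint} was to ensure that $g_0/u$ was bounded; here $\bar{g}_0$ already dominates $g_0$ up to the factor $\eta$, so no positivity assumption is needed.

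First I would use the hypothesis $g_0(x) \leq \eta \bar{g}_0(x)$ directly inside the definition of $d_h^2$ to obtain
\[
d_h^2(f_0,f) \leq \eta \int \left(\sqrt{f_0(y|x)} - \sqrt{f(y|x)}\right)^2 \bar{g}_0(x)\, dy\, dx
= \eta \int \left(\sqrt{f_0(y|x)\bar{g}_0(x)} - \sqrt{f(y|x)\bar{g}_0(x)}\right)^2 dy\, dx.
\]
Then, following the proof of Lemma \ref{lm:dH_cond_bdd_dHjoint}, I would split via the triangle inequality $(a-c)^2 \leq 2(a-b)^2 + 2(b-c)^2$ with $a = \sqrt{f_0(y|x)\bar{g}_0(x)}$, $b = \sqrt{f(y|x) g(x)}$, $c = \sqrt{f(y|x)\bar{g}_0(x)}$, yielding a bound by $2(\mathrm{I} + \mathrm{II})$, where $\mathrm{I}$ is the right-hand side of the claimed inequality (up to the constant) and
\[
\mathrm{II} = \int \left(\sqrt{f(y|x)g(x)} - \sqrt{f(y|x)\bar{g}_0(x)}\right)^2 dy\, dx = \int \left(\sqrt{g(x)} - \sqrt{\bar{g}_0(x)}\right)^2 dx.
\]

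Next I would show $\mathrm{II} \leq \mathrm{I}$. Expanding $\mathrm{I}$, integrating out $y$ using $\int f_0(y|x)\,dy = \int f(y|x)\,dy = 1$, and applying $\int \sqrt{f_0(y|x) f(y|x)}\, dy \leq \tfrac{1}{2}\int (f_0(y|x) + f(y|x))\, dy = 1$, one gets
\[
\mathrm{I} \geq 2 - 2\int \sqrt{\bar{g}_0(x) g(x)}\, dx = \int \left(\sqrt{\bar{g}_0(x)} - \sqrt{g(x)}\right)^2 dx = \mathrm{II}.
\]
Combining everything gives $d_h^2(f_0,f) \leq \eta \cdot 2(\mathrm{I} + \mathrm{II}) \leq 4\eta\,\mathrm{I}$, which is the claim.

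There is no real obstacle: the argument is essentially a relabeling of Lemma \ref{lm:dH_cond_bdd_dHjoint}. The only thing to be slightly careful about is making sure that the only place where $g_0$ appears is in the initial $d_h^2$ definition, so that the upper bound $g_0 \leq \eta \bar{g}_0$ can be applied cleanly and the rest of the argument (which only manipulates $\bar{g}_0, g, f_0, f$) goes through without any pointwise positivity assumption on $\bar{g}_0$ or $g$.
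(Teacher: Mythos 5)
Your proposal is correct and is essentially identical to the paper's own argument: the paper proves the corollary by noting that inequality \eqref{eq:ineq_I_II} from Lemma \ref{lm:dH_cond_bdd_dHjoint} holds with $u$ replaced by $\bar{g}_0$ and $\bar{g}/\underline{u}$ replaced by $\eta$, and then reuses the $\mathrm{II}\leq\mathrm{I}$ step verbatim. Your observation that only the first inequality touches $g_0$, so no positivity of $\bar{g}_0$ or $g$ is needed, is exactly the point of the corollary.
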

To prove the corollary note that the inequality \eqref{eq:ineq_I_II} in the proof of Lemma \ref{lm:dH_cond_bdd_dHjoint} holds under $\eta \bar{g}_0(x) \geq g_0(x)$ with 
$u$ replaced by $\bar{g}_0$ and $\bar{g}/\underline{u}$ replaced by $\eta$.  The rest of the lemma's proof applies with $\bar{g}_0$ replacing $u$.

\begin{lemma} 
\label{lm:positive_mix_dens}

In Theorem 3 of \cite{ShenTokdarGhosal2013},
replace their
$g_\sigma=f_{\sigma} + (1/2) f_0 1\{f_{\sigma}  < (1/2) f_0 \}$
with $g_{\sigma} = f_{\sigma} + 2 |f_{\sigma}| 1 \{f_{\sigma}  < 0 \}$, where notation from \cite{ShenTokdarGhosal2013} is used.  Then, the claim of the theorem holds.

\end{lemma}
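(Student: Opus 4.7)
The plan is to re-run the original proof of Theorem 3 in \cite{ShenTokdarGhosal2013} (STG) verbatim, inserting the new $g_\sigma$ at the single step where the correction is introduced, and to check that (i) $g_\sigma$ is nonnegative and close to $f_0$ in the same senses as in STG, and (ii) the subsequent finite-mixture representation has nonnegative mixing weights, which is the property that motivated the modification.

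First, observe that the new formula gives $g_\sigma = f_\sigma$ on $\{f_\sigma\ge 0\}$ and $g_\sigma = -f_\sigma$ on $\{f_\sigma<0\}$, i.e., $g_\sigma=|f_\sigma|\ge 0$. Writing $f_\sigma^- = \max(-f_\sigma,0)$ and using $f_0\ge 0$, the pointwise bound $f_\sigma^-\le |f_\sigma-f_0|$ yields
\[
\|g_\sigma - f_\sigma\|_1 \;=\; 2\!\int f_\sigma^-\,dy \;\le\; 2\|f_\sigma-f_0\|_1,
\]
so by the triangle inequality $\|g_\sigma-f_0\|_1 \le 3\|f_\sigma-f_0\|_1$, which is of the same order as in STG. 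The analogous control for the Hellinger and Kullback--Leibler distances used in the remainder of STG's proof follows by combining this $L^1$-bound with STG's pointwise lower bound on $f_\sigma$ on the compact region where the truncation takes place and with the exponential tail bound on $f_0$ assumed there; in particular the normalizing constant $\int g_\sigma$ differs from $1$ by $O(\sigma^\beta)$, so normalization does not alter the rates.

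Second, I would verify that the corrected density admits a finite mixture representation with nonnegative weights. STG build $f_\sigma$ as a signed mixture $\sum_j c_j\phi_\sigma(\cdot-\mu_j)$ on a suitable grid. By the triangle inequality $g_\sigma = |f_\sigma| \le \sum_j |c_j|\phi_\sigma(\cdot-\mu_j)$, and the right-hand side, after division by $Z = \sum_j|c_j|$, is a bona fide finite Gaussian mixture whose weights lie in the simplex. The discrepancy between $\sum_j|c_j|\phi_\sigma(\cdot-\mu_j)/Z$ and $g_\sigma$ is controlled on $\{f_\sigma\ge 0\}$ by the indices with $c_j<0$, whose total weight is bounded by $\int f_\sigma^-$, and on $\{f_\sigma<0\}$ symmetrically, so the overall bound matches the one in STG up to constants. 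Plugging this mixture into the remainder of STG's argument, where the original $g_\sigma$ appears only through the above three bounds ($L^1$-proximity to $f_0$, Hellinger proximity, and a mixture representation), yields the stated conclusion of the theorem.

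The main obstacle I expect is showing that $Z = \sum_j |c_j|$ stays within $1 + O(\sigma^\beta)$ rather than blowing up when the Taylor-remainder coefficients $c_j$ have alternating signs at high order; I would address this by invoking STG's moment estimates for the Taylor correction, which already control $\sum_j c_j^-$ directly. A secondary subtlety is that the new correction is genuinely different pointwise from the STG one, so the lower bound $g_\sigma \ge \tfrac12 f_0$ used in STG's KL estimate no longer holds on $\{f_\sigma<0, |f_\sigma|<\tfrac12 f_0\}$; however, this set has integrated $f_0$-mass of order $\int f_\sigma^- \lesssim \sigma^\beta$, so the affected contribution to the KL integrals is absorbed into the same rate.
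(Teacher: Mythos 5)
Your opening observation is correct and is exactly half of the paper's argument: the new $g_\sigma$ equals $|f_\sigma|$, hence is non\-negative, and $\int g_\sigma = \int f_\sigma + 2\int f_\sigma^- = 1 + 2\int f_\sigma^-$ stays in $[1,\,1+K_2\sigma^{2\beta}]$ because the negative part of $f_\sigma$ is supported where $f_\sigma$ is already controlled by $f_0$. The paper's entire proof consists of this normalization check plus one more localized change: the single pointwise inequality $r_\sigma \le g_\sigma$ in the proof of Theorem 3 of \cite{ShenTokdarGhosal2013} must be weakened to $(1/2)\,r_\sigma \le g_\sigma$, after which their argument runs verbatim. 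Nothing else needs to be touched.

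Your second step is where the proposal goes off course. You treat $f_\sigma$ as an already discretized signed mixture $\sum_j c_j\phi_\sigma(\cdot-\mu_j)$ and propose to realize $g_\sigma$ through $\sum_j|c_j|\phi_\sigma(\cdot-\mu_j)/Z$. This is both misplaced and unjustified. Misplaced, because in \cite{ShenTokdarGhosal2013} the correction $g_\sigma$ is applied to the continuous mixing density \emph{before} discretization; once $g_\sigma\ge 0$, normalizing and then discretizing automatically produces weights in the simplex, which is the entire purpose of the modification --- no second mixture representation is required. Unjustified, because the error control you assert is false in general: $\bigl\|\sum_j|c_j|\phi_j-f_\sigma\bigr\|_1=2\sum_{j:\,c_j<0}|c_j|$, and the total negative coefficient mass $\sum_j c_j^-$ is \emph{not} bounded by $\int f_\sigma^-$ --- overlapping Gaussians cancel, so a signed combination can be pointwise non\-negative while $\sum_j c_j^-$ is arbitrarily large. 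You flag $Z=\sum_j|c_j|$ yourself as the main obstacle, and the appeal to unspecified ``moment estimates'' does not close it. Finally, your treatment of the lost lower bound (arguing that $\{f_\sigma<0\}$ has small $f_0$-mass) is weaker than what the downstream Kullback--Leibler-type integrals require, since $(\log(f_0/\cdot))^2$ can blow up precisely where the pointwise lower bound fails; the paper's replacement $(1/2)\,r_\sigma\le g_\sigma$ is a pointwise bound and avoids this issue altogether.
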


\begin{proof}

With the alternative definition of $g_\sigma$, the proof 
of \cite{ShenTokdarGhosal2013} goes through with the following changes.
First,
$1 \leq \int g_{\sigma}(x) dx = \int f_{\sigma}(x) dx  + 2 \int |f_{\sigma}| 1\{(f_{\sigma} < 0\} \leq 1+ 3 \int_{A_{\sigma}^c} f_0(x)dx \leq 1+ K_2 \sigma^{2\beta}$.
Second, replace inequality $r_\sigma \leq g_\sigma$   with   $(1/2) r_\sigma \leq g_\sigma$.

\end{proof}

\begin{lemma}
\label{lm:dH_KL}
There is a $\lambda_0 \in (0,1)$ such that for any $\lambda \in (0,\lambda_0)$ and any two conditional densities $p,q \in \mathcal{F}$, a probability measure $P$ on $\mathcal{Z}$ that has a conditional density equal to $p$, and $d_h$ defined with the distribution on $\mathcal{X}$ implied by $P$,
\[
P \log \frac{p}{q} \leq d_h^2(p,q) \left( 1+ 2 \log\frac{1}{\lambda}\right) 
+ 2 P \left \{  \left(\log \frac{p}{q} \right) 1 \left( \frac{q}{p}\leq \lambda \right) \right\},
\]
\[
P \left(\log \frac{p}{q} \right)^2 \leq d_h^2(p,q) \left( 12+ 2 \left(\log\frac{1}{\lambda}\right)^2\right) 
+ 8 P \left \{  \left(\log \frac{p}{q} \right)^2 1 \left( \frac{q}{p}\leq \lambda \right) \right\},
\]
\end{lemma}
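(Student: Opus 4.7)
The plan is to reduce the lemma to the standard Kullback--Leibler versus Hellinger comparison for joint densities and then prove that via a split-and-bound strategy. The reduction is immediate: treating $p(y|x)g(x)$ and $q(y|x)g(x)$ as joint densities on $\mathcal{Z}$, where $g$ is the marginal of $P$ on $\mathcal{X}$, the ratio $q/p$ is the same whether $p,q$ are regarded as conditional or joint, the Kullback--Leibler integrals $P\log(p/q)$ and $P(\log(p/q))^2$ agree, and $d_h(p,q)$ equals the Hellinger distance between the joint densities. Thus it suffices to prove the inequalities for two probability densities on a measure space.

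The first step is to decompose each integral into the good set $A_\lambda = \{q/p > \lambda\}$ and its complement. On $A_\lambda^c$ one has $\log(p/q) \geq \log(1/\lambda) > 0$ since $\lambda < 1$, so $P[\log(p/q)\,1_{A_\lambda^c}]$ and $P[(\log(p/q))^2\,1_{A_\lambda^c}]$ are non-negative and will be absorbed into the correction terms on the right-hand side, with the constants $2$ and $8$ providing slack to accommodate adjustments from the pointwise bounds below.

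The second step is to establish pointwise inequalities, valid for $r \geq \lambda$ with $\lambda$ smaller than some $\lambda_0 \in (0,1)$:
\begin{align*}
-\log r &\leq 2(1-\sqrt{r})_+ + 2\log(1/\lambda)\,(1-\sqrt{r})^2,\\
(\log r)^2 &\leq \bigl(12 + 2(\log(1/\lambda))^2\bigr)\,(1-\sqrt{r})^2.
\end{align*}
These are checked by case analysis in three regimes. Near $r=1$, Taylor expansion gives $-\log r \sim 2(1-\sqrt{r})$ and $(\log r)^2 \sim 4(1-\sqrt{r})^2$, so the linear and small quadratic contributions suffice. For $r \geq 1$ the linear term in the first inequality vanishes (positive part), and both left-hand sides are dominated by a universal constant times $(1-\sqrt{r})^2$ because $(\log r)^2/r \to 0$ as $r\to\infty$. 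In the critical range $\lambda \leq r < 1/2$, where $(1-\sqrt{r})^2$ stays bounded away from zero, $-\log r \leq \log(1/\lambda)$ and $(\log r)^2 \leq (\log(1/\lambda))^2$ at $r=\lambda$, so the coefficients $2\log(1/\lambda)$ and $2(\log(1/\lambda))^2$ provide just enough headroom. Integrating these bounds against $p$ on $A_\lambda$, using the identities $\int p(1-\sqrt{q/p})^2 d\mu = d_h^2(p,q)$ and $\int p(1-\sqrt{q/p})\,d\mu = 1 - \int\sqrt{pq}\,d\mu = d_h^2(p,q)/2$ (together with the observation that on $A_\lambda^c$ the integrand $p(1-\sqrt{q/p})$ has the sign of $p-q \geq 0$, so restricting to $A_\lambda$ only decreases the integral), yields good-set bounds of the form $d_h^2(1 + 2\log(1/\lambda))$ and $d_h^2(12 + 2(\log(1/\lambda))^2)$. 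Adding the non-negative corrections on $A_\lambda^c$ completes the proof.

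The main obstacle is the case analysis behind the pointwise inequalities; the delicate region is $r \in [\lambda, 1/2]$, where simultaneously a linear-in-$(1-\sqrt{r})$ term (to match the leading order of $-\log r$ near $r=1$) and a quadratic-in-$(1-\sqrt{r})$ term with coefficient of order $\log(1/\lambda)$ (to cover the growth of $-\log r$ down to the endpoint $r=\lambda$) are needed. This balance forces the specific coefficients $1 + 2\log(1/\lambda)$ and $12 + 2(\log(1/\lambda))^2$ in the statement and dictates the existence of a small enough $\lambda_0$ for which all the pointwise bounds simultaneously hold.
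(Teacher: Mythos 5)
Your overall strategy---pass to the joint densities $pg$ and $qg$, split on $A_\lambda=\{q/p>\lambda\}$, compare $-\log r$ and $(\log r)^2$ pointwise with $(1-\sqrt r)$ and $(1-\sqrt r)^2$, then integrate---is exactly the route of the proofs the paper defers to (the paper gives no argument of its own; it cites Lemma 4 of Shen--Tokdar--Ghosal (2013), which follows Lemma 7 of Ghosal--van der Vaart (2007)). However, there is a genuine gap in your first inequality. Your pointwise bound uses the positive part $(1-\sqrt r)_+$, but the identity you then integrate against, $\int p\,(1-\sqrt{q/p})\,d\mu=d_h^2/2$, holds only for the \emph{signed} quantity. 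The positive-part integral $\int p\,(1-\sqrt{q/p})_+\,d\mu=\int_{\{q<p\}}\sqrt p\,(\sqrt p-\sqrt q)\,d\mu$ is in general of order $d_h$, not $d_h^2$: Cauchy--Schwarz gives the upper bound $d_h$, and that order is attained (take $q=p(1+\delta g)$ with $\int gp\,d\mu=0$; then $d_h^2\asymp\delta^2$ while the positive-part integral is $\asymp\delta$). So the step ``integrate $2(1-\sqrt{q/p})_+$ over $A_\lambda$ and bound by $d_h^2$'' fails, and with it the claimed bound $d_h^2\left(1+2\log(1/\lambda)\right)$. The repair is to keep the signed linear term everywhere: the pointwise inequality $-\log r\le 2(1-\sqrt r)+c_\lambda(1-\sqrt r)^2$ still holds for $r\ge 1$ with the signed term, since $-\log r-2(1-\sqrt r)\le 2(1-\sqrt r)^2/\sqrt r\le 2(1-\sqrt r)^2$ there (use $\log v\ge 1-1/v$ with $v=\sqrt r$); then the monotonicity observation you already make---that $p\,(1-\sqrt{q/p})>0$ on $A_\lambda^c$, so $\int_{A_\lambda}p\,(1-\sqrt{q/p})\,d\mu\le d_h^2/2$---completes the integration step. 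The second displayed inequality is unaffected by this issue, since all its terms are squared.

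A secondary point: your three-regime case analysis does not actually produce the stated coefficients. On $r\in[\lambda,1/2]$, bounding $-\log r\le\log(1/\lambda)$ and dividing by $(1-\sqrt r)^2\ge(1-1/\sqrt2)^2\approx0.086$ yields a coefficient near $12\log(1/\lambda)$ rather than $2\log(1/\lambda)$, and likewise for the squared version; no choice of $\lambda_0$ absorbs this. To get the constants in the statement, use that for $u=\sqrt r\in[\sqrt\lambda,1]$ the functions $[\log(1/u)-(1-u)]/(1-u)^2$ and $\log(1/u)/(1-u)$ are decreasing in $u$, evaluate at $u=\sqrt\lambda$, and absorb the factor $(1-\sqrt\lambda)^{-2}\le 2$ for $\lambda\le\lambda_0$; this gives the coefficients $2\log(1/\lambda)$ and $2(\log(1/\lambda))^2$ uniformly on $[\lambda,1]$, which combine with the bounded coefficients on $[1,\infty)$ to yield the lemma as stated.
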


\begin{proof}
The proof is exactly the same as the proof of Lemma 4 of \cite{ShenTokdarGhosal2013}, which in turn, follows the proof of Lemma 7 in \cite{GhosalVandervaart:07}.
\end{proof}

%\section{Posterior contraction result for the method in Section \ref{sec:simul}}

\begin{theorem}
\label{thm:lincoeflocals} 
Assume $f_0$ satisfies the assumptions in Section \ref{sec:irr_cov} with $d_y =1$.  Then the model \eqref{eq:cond_mix_finsam} in Section \ref{sec:simul} and the prior specifications following it leads to the same posterior contraction rate as specified in Corollary \ref{cr:irr_cov}.
\end{theorem}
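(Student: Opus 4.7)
The plan is to reduce the theorem to Corollary \ref{cr:irr_cov} by observing that setting $\beta_j = (\mu_j^y, 0, \ldots, 0)$, so that $x'\beta_j = \mu_j^y$ is constant in $x$, and fixing all local scales $s^y_j = s^x_{jk} = 1$ makes the richer model \eqref{eq:cond_mix_finsam} coincide with the model studied in Corollary \ref{cr:irr_cov} (specialized to $d_y=1$, with component-specific global scales). Therefore, prior thickness and the sieve conditions for \eqref{eq:cond_mix_finsam} will be established by placing $\beta_j$ and the local scales in small neighborhoods of these targets and recycling the estimates from Theorem \ref{th:prior_thickness} and Corollary \ref{cr:irr_cov}.

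For the prior thickness condition \eqref{eq:prior_thick}, I would augment the set $S_{\theta^\star}$ constructed in the proof of Corollary \ref{cr:irr_cov} with the constraints: (i) $|\beta_{j,0} - \mu_j^{y\star}| \leq \sigma_n \tilde{\epsilon}_n^{2b_1}$ and, for $k=1,\ldots,d_x$, $|\beta_{j,k}| \leq \sigma_n \tilde{\epsilon}_n^{2b_1}/a_{\sigma_n}$, so that $x'\beta_j$ approximates $\mu_j^{y\star}$ uniformly to order $o(\sigma_n)$ on the bulk region $\{\|x\|\leq a_{\sigma_n}\}$; (ii) $s^y_j \in [1,\,1+\sigma_n^{2\beta}]$ and $s^x_{jk} \in [1,\,1+\sigma_n^{2\beta}]$ for relevant coordinates, with the same type of looser control for irrelevant coordinates that was used in Corollary \ref{cr:irr_cov}. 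Under these augmentations, the triangle inequality together with the $L^1$ perturbation bound \eqref{eq:bd4sigmas} and the analogous bound for perturbations of the mean yield $d_h(f_0, p(\cdot|\cdot,\theta,m)) \lesssim \sigma_n^\beta$. The prior probability of the augmented set picks up only additional factors that are exponentially small in $H \log(1/\tilde{\epsilon}_n)$: the Gaussian prior on $\beta_j$ is uniformly bounded below on balls of the required radius, in analogy with \eqref{eq:asnPrior_mu_lb}, while the Gamma priors on $(s^y_j)^{-2}$ and $(s^x_{jk})^{-2}$ have strictly positive density near 1.

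For the sieve construction, I would extend \eqref{eq:sieve} by adding $\|\beta_j\| \leq \overline{\mu}$ and $s^y_j,\,s^x_{jk} \in [\underline{s},\overline{s}]$. The covering-number calculation in Theorem \ref{th:sieve} then acquires two new types of factors: a polynomial-in-$\overline{\mu}/\underline{\sigma}$ factor from discretizing the $\beta_j$ coordinates (in place of the $\mu^y$ factor), and additional logarithmic factors from discretizing the local scales. Both contribute $O(H \log n)$ to the log-entropy, consistent with \eqref{eq:entropy}. For the sieve complement, the Gaussian tail on $\beta_j$ dominates the $\tau_3$-tail of \eqref{eq:asnPrior_mu_tail_ub}, and the Gamma tails of the inverse squared local scales contribute exponentially small terms, so \eqref{eq:sievecomplement} continues to hold with the sequences used in Theorem \ref{tm:sieve_n}.

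The main technical obstacle will be the $x$-dependence of the component means. Because $x'\beta_j$ varies with $x$, the slope perturbations must be uniformly small on an $a_{\sigma_n}$-ball containing most of the $f_0 g_0$-mass, which forces the slope coordinates to live at the finer scale $\sigma_n \tilde{\epsilon}_n^{2b_1}/a_{\sigma_n}$ rather than at the $\sigma_n \tilde{\epsilon}_n^{2b_1}$ scale available for the intercept. It then has to be verified that the resulting Gaussian-prior log-probability loss of order $H d_x \log(a_{\sigma_n})$ is dominated by the $H\log(1/\tilde{\epsilon}_n)$ loss already present in the proof of Corollary \ref{cr:irr_cov}; since $a_{\sigma_n} = O((\log n)^{1/\tau})$, this extra loss is purely logarithmic and is absorbed into the existing exponent of $\log n$ in $\tilde{\epsilon}_n$ without changing the final rate.
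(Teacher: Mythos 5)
Your proposal is correct and follows essentially the same route as the paper: reduce to the model of Corollary \ref{cr:irr_cov} by targeting zero slopes and unit local scales, augment $S_{\theta^\star}$ with small neighborhoods for $\beta_j$ and the local scales, control the resulting perturbation of $d_h$ via the triangle inequality and the $L^1$ bounds, and extend the sieve with the extra parameters at only logarithmic cost to the entropy. The only cosmetic difference is that you shrink the slope neighborhood by the factor $a_{\sigma_n}$ to get uniform control on the bulk, whereas the paper keeps the radius $\sigma_n\tilde{\epsilon}_n^{2b_1}$ and absorbs the resulting $a_{\sigma_n}\tilde{\epsilon}_n^{2b_1}/\sigma_n$ term (after integrating $\|x\|$ against the mixture) using the logarithmic slack built into $b_1$; both choices are harmless for the prior mass and the final rate.
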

\begin{proof}
In the following we will verify prior 
thickness condition with the same $\tilde{\epsilon}_n$ (with $d_y =1$) as in Corollary \ref{cr:irr_cov} and modify the sieve construction accordingly. 
The proof proceeds along the lines of the proof of Corollary \ref{cr:irr_cov}.  The main difference is that the following joint density is used in bounds for the 
distance between conditional densities
\begin{align*}
\tilde{p}(y, x \mid \theta, m) = &
\sum_{j=1}^m \frac{\alpha_j \exp\{-0.5\sum_{k=1}^{d_x}(x_k - \mu_{jk}^x)^2
/(\sigma^x_k s^x_{jk})^2\}}{\sum_{i=1}^m \alpha_i \exp\{-0.5\sum_{k=1}^{d_x}(x_k - \mu_{ik}^x)^2
/(\sigma^x_k s^x_{ik})^2\}}  \phi_{\mu_j^y+ x'\beta_j, \sigma^y s^y_j}(y) 
\\
& \cdot \sum_{j=1}^m \alpha_j \phi_{\mu_{j}^x, \sigma^x \circ s^x_j}(x),
\end{align*} 
where  $\circ$ denotes the Hadamard product.  The  intercept absorbed in the notation ``$x'\beta_j$" in \eqref{eq:cond_mix_finsam} is denoted by $\mu_j^y$ here.   
Let
\begin{align*}
\theta_{d_x^0}=
\{ &
\mu_j^y, \mu_{j1d_x^0}^{x}=(\mu_{j1}^{x}, \ldots,\mu_{j d_x^0}^{x}), \alpha_j, 
s^y_j, s^x_{j1d_x^0}=(s^x_{j1},\ldots,s^x_{jd_x^0}), \\
& \beta_{j1d_x^0}=(\beta_{j1},\ldots,\beta_{jd_x^0}), \; j=1,2,\ldots; \; \sigma^y, \sigma^x_{1d_x^0}=(\sigma^x_1,\ldots,\sigma^x_{d_x^0})
\},
\end{align*}
\begin{align*}
	S_{\theta^\star}=&\big \{
	(\mu_j, \alpha_j, \, j=1,2,\ldots; \sigma^y, \sigma^x): 
	 \; (\mu_j^y,\mu_{j1}^{x}, \ldots,\mu_{j d_x^0}^{x})  \in U_j,\; \\
	&
	||(\mu_{j d_x^0+1}^{x}, \ldots,\mu_{j d_x}^{x})|| \leq \sigma_n \tilde{\epsilon}_n^{2b_1},  ||\beta_j|| \leq \sigma_n \tilde{\epsilon}_n^{2b_1}\;
	j\leq K; \\
	& \sum_{j=1}^K \abs{\alpha_j - \alpha_j^\star} \leq 2\tilde{\epsilon}_n^{2 d^0 b_1}, \, \min_{j=1, \ldots, K} \alpha_j \geq \tilde{\epsilon}_n^{4 d^0 b_1}/2; 
	\\	& 
	(\sigma^x_k)^2, (\sigma^y)^2 \in [ \sigma_n^{2}/(1+ \sigma_n^{2\beta}), \sigma_n^{2} ],\;  k \leq d_x^0;\\
	& (\sigma^x_k)^2 \in [ a_{\sigma_n}^2,  2a_{\sigma_n}^2],\; 	k=d_x^0+1, \ldots, d_x
	; \\
	&  s^x_{jk}, s^y_j \in  [1,1+\sigma_n^{2\beta}], j=1,2,\ldots, K;  k=1, \ldots, d_x \big \},
\end{align*}
and $s^x_{jk}=s^y_j=1$ and $\beta_{jk}=0$ in $\theta_{d_x^0}^\ast$  for $k=1, 2, \ldots, d_x^0$ and $j=1,\ldots,K$.

Similarly to the proof of Corollary \ref{cr:irr_cov},
\begin{align*}
d_h(f_0, p (\cdot | \cdot, \theta,m)) \lesssim  
\sigma_n^\beta  
 + d_H(\tilde{p}(\cdot |  \theta^\ast_{d_x^0},m), \tilde{p}(\cdot |  \theta_{d_x^0},m)) 
 + d_h(p(\cdot | \cdot, \theta_{d_x^0},m), p(\cdot | \cdot, \theta,m)).
\end{align*}
Consider $\theta \in S_{\theta^\star}$ and let $s_{\cdot j}=\prod_{k=1}^{d_x^0}s_{jk}$.
Then, $d_H(\tilde{p}(\cdot |  \theta^\ast_{d_x^0},m), \tilde{p}(\cdot |  \theta_{d_x^0},m))^2$ can be bounded by 
\begin{align*}
 & \norm
{ 
	\sum_{j=1}^K \alpha_j^* \phi_{\mu_j^*, \sigma_n}(\cdot) - 
	\sum_{j=1}^K \alpha_j s_{\cdot j} \phi_{\mu_j^y + x_{1d_x^0}^\prime \beta_{j1d_x^0}, s^y_j \sigma_j^y}(\cdot) 
																		\phi_{\mu_{j1d_x^0}^x,s^x_{j1d_x^0} \circ \sigma^x_{1d_x^0}}(\cdot) 
	\frac{\sum_{j=1}^K \alpha_j \phi_{\mu_{j1d_x^0}^x, s^x_{j1d_x^0} \circ \sigma^x_{1d_x^0}}(\cdot)}
	  	 {\sum_{j=1}^K \alpha_j s_{\cdot j} \phi_{\mu_{j1d_x^0}^x, s^x_{j1d_x^0} \circ \sigma^x_{1d_x^0}}(\cdot)} 
}_1\\
 & \leq 
\norm
{
	\sum_{j=1}^K \alpha_j^* \phi_{\mu_j^*, \sigma_n}(\cdot) - \sum_{j=1}^K \alpha_j s_{\cdot j}
													\phi_{\mu_j^y + x_{1d_x^0}^\prime \beta_{j1d_x^0}, s^y_j \sigma^y}(\cdot) 
													\phi_{\mu_{j1d_x^0}^x, s^x_{j1d_x^0}\circ \sigma^x_{1d_x^0}}(\cdot)
}_1 +
 \\  
& \;\;\; \; 
\norm
{ 
	\sum_{j=1}^K \alpha_j s_{\cdot j} \phi_{\mu_j^y + x_{1d_x^0}^\prime \beta_{j1d_x^0}, s^y_j \sigma^y}(\cdot) 
																		\phi_{\mu_{j1d_x^0}^x, s^x_{j1d_x^0}\circ\sigma^x_{1d_x^0}}(\cdot) 
	\bigg\{\frac{\sum_{j=1}^K \alpha_j \phi_{\mu_{j1d_x^0}^x, s^x_{j1d_x^0} \circ \sigma^x_{1d_x^0}}(\cdot)}
							{\sum_{j=1}^K s_{\cdot j}\alpha_j \phi_{\mu_{j1d_x^0}^x, s^x_{j1d_x^0} \circ \sigma^x_{1d_x^0}}(\cdot)} 
					-1  
	\bigg\} 
}_1\\
& \lesssim \bigg[ \sum_{j=1}^K \abs{\alpha_j - \alpha_j^*} +  \sigma_n^{2\beta} +  \sum_{j=1}^K \alpha_j^*\bigg(\frac{||\mu_j - \mu_j^*||}{\sigma_n}  + \frac{\norm{\beta_{j1d_x^0}}a_{\sigma_n}}{\sigma_n}+ \sigma_n^{2\beta}\bigg)  + \sigma_n^{2\beta} \bigg]  \lesssim  \sigma_n^{2\beta},
\end{align*}
where the penultimate inequality is implied by
$\abs{\sigma_n^2/(s^y_{j} \sigma^y)^2 - 1} \leq  3\sigma_n^{2\beta}$,
$\abs{\sigma_n^2/(s^x_{jk} \sigma^x_k)^2 - 1} \leq  3 \sigma_n^{2\beta}$,
$\abs{s_{\cdot j} - 1} \leq d_x^0 \sigma_n^{2\beta}$, 
$\int ||x_{1d_x^0}|| \phi_{\mu_{j1d_x^0}^x, s^x_{j1d_x^0} \circ \sigma^x_{1d_x^0}}(x_{1d_x^0}) d(x_{1d_x^0}) \lesssim ||\mu_{j1d_x^0}^x|| \leq a_{\sigma_n}$, and
an argument similar to the one preceding \eqref{eq:bd4sigmas}.

Next note that for $\theta \in S_{\theta^\star}$,
\begin{align*}
d_h^2(p(\cdot | \cdot, \theta_{d_x^0},m), p(\cdot | \cdot, \theta,m)) & \lesssim \int \max_{1\leq j \leq m}  
|K_j - \tilde{K}_j|/\abs{K_j} g_0(x) dx + \int \max_{1\leq j \leq m}  \frac{\abs{x'\beta_j}}{\sigma_n} g_0(x) dx\\
& \lesssim \sigma_n^{2\beta} +   \tilde{\epsilon}_n^{2b_1} \int \norm{x} g_0(x) dx \lesssim \sigma_n^{2\beta}, 
\end{align*}
where the first part of the penultimate inequality follows similarly to the proof of Corollary \ref{cr:irr_cov}.

Next, let us bound the ratio  $p(y | x, \theta, m)/f_0(y|x)$.   
For $||(y, x)|| \leq a_{\sigma_n}$,
observe that 
$ \exp \big\{- |y - \mu_j^y - x^\prime\beta_j |^2/(2\sigma_n^2)\big\}  \geq  \exp \big\{- |y - \mu_j^y|^2/\sigma_n^2 - |x^\prime\beta_j |^2/\sigma_n^2\big\}$ and 
$|x^\prime\beta_j| / \sigma_n \leq  a_{\sigma_n}{\epsilon}_n^{2b_1} \leq 1$.  Thus, $\lambda_n$ can be defined by \eqref{eq:lambda_def}.

For $\norm{(y, x)} \geq a_{\sigma_n}$,  
\begin{align*}
\bigg\{\log \frac{f_0(y|x)}{p(y|x, \theta, m)}\bigg\}^2 \lesssim \bigg(\frac{a_{\sigma_n}^4}{\sigma_n^4} + \frac{|y|^4}{\sigma_n^4} +  \norm{x}^4 \tilde{\epsilon}_n^{4b_1}\bigg), 
\end{align*} 
which implies that
\begin{align*}
& \int \bigg\{\log \frac{f_0(y|x)}{p(y|x, \theta, m}\bigg\}^2 1\bigg\{ \frac{p(y|x, \theta, m}{f_0(y|x)}  < \lambda_n\bigg\} f(y|x) g_0(x) dydx 
\\
& \lesssim \bigg[ \frac{a_{\sigma_n}^4 P_0(\norm{Z} > a_{\sigma_n}) }{\sigma_n^4}   +   
\bigg\{\frac{ E_0(|Y|^8)^{1/2} }{\sigma_n^4} +  \tilde{\epsilon}_n^{4b_1}E_0(\norm{X}^8)^{1/2} \bigg\}(P_0(\norm{Z} > a_{\sigma_n}))^{1/2}
\bigg]  \\
& \lesssim \sigma_n^{2\beta + \varepsilon/2},
\end{align*}
as in the proof of Corollary \ref{cr:irr_cov}.  

The lower  bound for the prior probability of $S_{\theta^\star}$ and $m=K$ is the same as the one in 
Theorem \ref{th:prior_thickness}, except $d$ is replaced with $d^0$.  The only additional calculation is as follows,
\begin{equation*}
\Pi(s^y_j,s^x_{jk} \in [1,1+\sigma_n^{2\beta}], j=1,2,\ldots, K;  k=1, \ldots, d_x)
\gtrsim 
\exp \{ - 2 \beta K d \log (1/\sigma_n) \}, 
\end{equation*}
which can be bounded from below as required by the arguments in the proof of Theorem \ref{th:prior_thickness}.
Thus, the prior thickness condition  follows. 

Finally, let us consider bounds on the sieve entropy and the prior probability of the sieve's complement.
The argument here involves only minor changes in the proofs of Theorem \ref{th:sieve} and Corollary \ref{cr:nonc}.
In the definition of sieve 
\eqref{eq:sieve}, let us add the following conditions for the $\beta_j$s and the local scale parameters
\begin{align*}
\beta_j & \in [-\overline{\beta}^{x}, \overline{\beta}^{x}]^{d_x},  \overline{\beta}^{x} =   \overline{\mu}^{x}, j=1, \ldots, m \\
s^y_j,  s^x_{jk} & \in [\underline{\sigma}, \overline{\sigma}],  k=1,\ldots,d_x, \; j =1, 2, \ldots, m. 
\end{align*}

As in Corollary \ref{cr:nonc} and Corollary \ref{cr:irr_cov}, we aim to find the covering number of $\mathcal{F}$ in $d_1$ instead of $d_{SS}$. 
First, let us replace the definition of $S_{\sigma}$ in the proof of Corollary \ref{cr:nonc} with 
\begin{align*}
S_{\sigma} =  \{& \sigma^{l},l=1, \ldots, N_{\sigma} = \lceil{ \log (\overline{\sigma}^2/\underline{\sigma}^2)/ 
(\log (1 + \underline{\sigma}^4\epsilon/(2\cdot 384(\overline{\mu}^x)^2\max\{d_x,d_y\})}\rceil, \\
& \sigma^1= \underline{\sigma},  
(\sigma^{l+1} - \sigma^l)/\sigma^l = \underline{\sigma}^4\epsilon/(2\cdot 384(\overline{\mu}^x)^2\max\{d_x,d_y\})\}
\end{align*}
and
use this $S_{\sigma}$ as the grid for $s^y_j$,  $s^x_{jk}$, $\sigma^y$, and $\sigma^x_k$, $k=1,\ldots,d_x$, $j =1, 2, \ldots, m$.
Note that for $\tilde{\sigma}>\sigma$ and $\tilde{s}>s$, 
$|\sigma s - \tilde{\sigma} \tilde{s}|/(\sigma s) \leq |\sigma - \tilde{\sigma} |/\sigma + |s - \tilde{s}|/s$ and that is why $384$
is replaced by $2\cdot 384$ in the new definition of $S_{\sigma}$.
Since $s^y_j \sigma^y, s^x_{jk}\sigma^x_k \in [\underline{\sigma}^2,\overline{\sigma}^2]$, all the bounds obtained in 
Corollary \ref{cr:nonc} now involve $(\underline{\sigma}^2,\overline{\sigma}^2)$ in place of 
$(\underline{\sigma},\overline{\sigma})$.

Another difference is in the treatment of the new term $x'\beta_j$.  Observe that  for $\beta_j^{(1)}, \beta_j^{(2)} \in \mathcal{F}$ for $j=1, \ldots, m$, 
\begin{eqnarray*}
\int_{\mathcal{X}} \max_{1\leq j \leq m} \abs{x'\beta_j^{(1)} - x'\beta_j^{(2)}} g_0(x) dx \leq  \max_{1\leq j \leq m}\norm{\beta_j^{(1)} - \beta_j^{(2)}} 
\int_{\mathcal{X}} \norm{x}g_0(x) dx .
\end{eqnarray*}
Let us define $S_{\beta}^m$ to  contain centers of $|S_{\beta}^m| =  \lceil 2\cdot 192d_x \int_{\mathcal{X}} \norm{x}g_0(x) dx (\overline{\beta}^x)^2/(\underline{\sigma}^4 \epsilon) \rceil$ equal length intervals partitioning $[-\overline{\beta}, \overline{\beta}]$.
$S_{\mu^x}^m$ now contains centers of $|S_{\mu^x}^m| =  \lceil 2\cdot 192d_x (\overline{\mu}^x)^2/(\underline{\sigma}^2 \epsilon) \rceil$ equal length intervals 
partitioning $[-\overline{\mu}^x, \overline{\mu}^x]$. 

As in the proof of Corollary \ref{cr:nonc}, we thus obtain
\begin{align*}
J(\epsilon, \mathcal{F}, d_1) \leq & 
H \cdot \left \lceil \frac{16\overline{\mu}d_y}{\underline{\sigma}^2\epsilon} \right \rceil^{Hd_y}
\cdot \left \lceil \frac{2\cdot 192 d_x (\overline{\mu}^x)^2}{\underline{\sigma}^4 \epsilon} \right \rceil^{Hd_x}
\cdot H \left \lceil \frac{ \log (\underline{\alpha}^{-1}) }{\log (1 + \epsilon/[12H])} \right \rceil^{H-1}
\\ 
& \cdot \left \lceil \frac{2\cdot 192 d_x \int_{\mathcal{X}} \norm{x}g_0(x) dx \cdot \overline{\beta}^2}{\underline{\sigma}^4 \epsilon} \right \rceil^{Hd_x} \\
& \cdot \left \lceil \frac{ \log (\overline{\sigma}^2/\underline{\sigma}^2)} {\log (1 + \underline{\sigma}^4\epsilon/[2\cdot 384 (\overline{\mu}^x)^2 \max\{d_x,d_y\}])} \right \rceil^{d(H+1)}.
\end{align*}
Observe that $\Pi(\mathcal{F}^c)$ is bounded above by 
\begin{align*}
  & H^2 \exp\{-a_{13} \overline{\mu} ^{\tau_3}\} + H^2 \exp\{-a_{16} (\overline{\mu}^x)^{\tau_5}\}  + H^2 \underline{\alpha}^{a/H} 
  +   \exp\{-a_{10} H(\log H)^{\tau_1}  \} \\
	&
	+ d \Pi (\sigma^y \notin [\underline{\sigma}, \overline{\sigma}])
	+ d H \Pi (s_j^y \notin [\underline{\sigma}, \overline{\sigma}]). 
\end{align*}
The rest of the proof follows the argument in the proof of Theorem \ref{tm:sieve_n}
with the same sequences, except $\underline{\sigma}=n^{-1/a_3}$ (as the prior for $(s_j^y)^2$ satisfies 
the same conditions (\eqref{eq:asnPrior_sigma1}-\eqref{eq:asnPrior_sigma3}) as the prior for $\sigma^y$) and $\overline{\mu}^x = n^{1/\tau_5}$. 
Thus, the claim of Corollary \ref{cr:irr_cov} holds. 
\end{proof}

%\bibliographystyle{ecta}
%\bibliographystyle{model5-names}
%\bibliography{bdp,mixtures}

\end{document}